\definecolor{darkblue}{rgb}{0.0,0.0,0.5}
\definecolor{darkred}{rgb}{0.5,0.0,0.0}
\colorlet{darkgreen}{green!50!black}
\newcommand{\dbar}{d\hspace*{-0.08em}\bar{}\hspace*{0.1em}}
\numberwithin{equation}{subsection}
\theoremstyle{plain}
\newtheorem{theorem}{Theorem}[subsection]
\newtheorem{prop}[theorem]{Proposition}
\newtheorem{lemma}[theorem]{Lemma}
\newtheorem{corollary}[theorem]{Corollary}
\theoremstyle{definition}
\newtheorem{definition}[theorem]{Definition}
\newtheorem{remark}[theorem]{Remark}
\newtheorem{example}[theorem]{Example}
\newtheorem{condition}[theorem]{Condition}
\begin{document}

\begin{abstract}
We develop a semiclassical second microlocal calculus of pseudodifferential operators associated to linear coisotropic submanifolds $\mathcal{C}\subset T^* \mathbb{T}^n$, where $\mathbb{T}^n = \mathbb{R}^n / \mathbb{Z}^n$.  First microlocalization is localization in phase space $T^* \mathbb{T}^n$; second microlocalization is finer localization near a submanifold of $T^* \mathbb{T}^n$.
Our second microlocal operators test distributions on $\mathbb{T}^n$ (e.g., Laplace eigenfunctions) for a coisotropic wavefront set, a second microlocal measure of absence of \emph{coisotropic regularity}.  This wavefront set tells us where, in the coisotropic, and in what directions, approaching the coisotropic, a distribution lacks coisotropic regularity.


We prove propagation theorems for coisotropic wavefront that are analogous to H\"{o}rmander's theorem for pseudodifferential operators of real principal type.  Furthermore, we study the propagation of coisotropic regularity for quasimodes of semiclassical pseudodifferential operators.  We Taylor expand the relevant Hamiltonian vector field, partially in the characteristic variables, at the spherical normal bundle of the coisotropic.  Provided the principal symbol is real valued and depends only on the fiber variables in the cotangent bundle, and the subprincipal symbol vanishes, we show that coisotropic wavefront is invariant under the first two terms of this expansion.
\end{abstract}

\title{Semiclassical second microlocalization at linear coisotropic submanifolds in the torus}
\author{Rohan Kadakia}
\date{\today}
\maketitle


2010 \emph{Mathematics Subject Classification}: 35S05, 35A18, 35A21.

\section{Introduction}

A submanifold $\mathcal{C}$ of the symplectic manifold $(T^* X,\omega)$
is said to be \emph{coisotropic} if $(T\mathcal{C})^\omega \subset T\mathcal{C}$; in words, $\mathcal{C}$ is coisotropic if the symplectic orthocomplement to its tangent bundle is a subbundle of the tangent bundle itself.

Most distributions encountered while studying PDE are not $O(h^\infty)$.  ($O(h^\infty)$ is the semiclassical analogue of $C^\infty$ smoothness.)  Some of these distributions, however, possess a different type of regularity---they are \emph{coisotropic distributions}, associated to a coisotropic submanifold $\mathcal{C}$.  We say that $u = u_h\in L^2(X)$ (uniformly as $h\downarrow 0$) is coisotropic if for all $k$ and all semiclassical pseudodifferential operators (PsDO) $A_1,\ldots,A_k\in\Psi_h(X)$ with $\sigma_\mathrm{pr}(A_j)\restriction_\mathcal{C}\equiv 0$, we have
\[
h^{-k} A_1\ldots A_k u\in L^2(X).
\]
Here, $\sigma_\mathrm{pr}(A)$ is the semiclassical principal symbol of $A$.  We may, by restricting the microsupports of the characteristic operators $A_j$, refine coisotropic regularity to be local on $\mathcal{C}$.

The central idea in this paper is that of second microlocalization at a linear coisotropic submanifold $\mathcal{C}$ of $T^*\mathbb{T}^n$, where $\mathbb{T}^n = \mathbb{R}^n / \mathbb{Z}^n$.  To (first) microlocalize is to localize in symplectic phase space, i.e., to simultaneously localize in position and momentum (up to the uncertainty principle).  Second microlocalization is more refined localization near a submanifold.  The first step is to \emph{blow up} $\mathcal{C}$, which technically means that we replace $\mathcal{C}$ with its spherical normal bundle $SN(\mathcal{C})$ (cf.\ \cite{Me-1}, \cite[Chapter~5]{Me-3}).



Next, for our second microlocal pseudodifferential calculus $\Psi_{2,h}(\mathcal{C})$, the collection of symbols consists of functions that are smooth on the blown up space.  In particular, this includes functions singular on the original (i.e., blown down) space with conormal singularities, resolved in the blowup.  Thus, $\Psi_{2,h}(\mathcal{C})$ contains the ordinary pseudodifferential calculus $\Psi_h(\mathbb{T}^n)$ as a subalgebra\footnote{Technically, as we see later, only PsDO with compactly supported symbols may be regarded as elements of $\Psi_{2,h}(\mathcal{C})$.}.

Associated to the calculus $\Psi_{2,h}(\mathcal{C})$ is a wavefront set ${}^2\mathrm{WF}$ in $SN(\mathcal{C})$, whose absence (together with absence of standard semiclassical wavefront $\mathrm{WF}_h$ in $T^*\mathbb{T}^n\backslash\mathcal{C}$) is equivalent to $u$ being a coisotropic distribution.  A helpful analogy is that presence of second wavefront set in $SN(\mathcal{C})$ is to failure of coisotropic regularity in $\mathcal{C}$ as presence of homogeneous wavefront in the cotangent bundle is to singular support in the base.  ${}^2\mathrm{WF}$ tells us where, in the coisotropic, \emph{and} in what directions, approaching the coisotropic, a distribution lacks coisotropic regularity; for instance, see Example \ref{ex:second wavefront}.

Whether in the homogeneous or semiclassical setting, several instances of second microlocalization exist in the literature: A.\ Vasy and J.\ Wunsch in the special case of Lagrangian submanifolds \cite{VW}; N.\ Anantharaman, C.\ Fermanian, and F.\ Maci\`{a} in \cite{AM-1,AM-2,AFM} to study \emph{defect measures}; and J-M.\ Bony's \cite{Bon} second microlocalization at conic Lagrangians.  To study resonances, J.\ Sj\"{o}strand and M.\ Zworski \cite{SZ} construct a second microlocal calculus for hypersurfaces.  Additional sources are \cite{BoLe,Del,CFK,KaKa,KaLa,Lau,Leb}.

Every coisotropic submanifold is endowed with a \emph{characteristic foliation}.  The leaves of the characteristic foliation of $\mathcal{C}$ are the integral curves of the Hamiltonian vector fields of its defining functions.  All of our results pertain specifically to linear coisotropic submanifolds.  As coordinates on $(T^* \mathbb{T}^n,\omega)$, we take $(x,\xi)$, where $\omega = d\xi\wedge dx$.  Then, a $d$-codimensional linear coisotropic is of the form $\mathcal{C} = \mathbb{T}^n \times \{\mathbf{v}_1\cdot\xi = \ldots = \mathbf{v}_d\cdot\xi = 0\}$ for $\mathbf{v}_j\in\mathbb{R}^n$.

\subsection{Propagation of coisotropic second wavefront set}
First, we show by commutator methods the analogue of H\"{o}rmander's real principal type theorem \cite{Ho-2}:

\begin{theorem} \label{primary propagation simple}
Let $P \in \Psi_{2,h}(\mathcal{C})$ and suppose that $P$ has real valued second principal symbol ${}^2 \sigma_\mathrm{pr}(P)$.  Assume also that the distribution $u$ satisfies $Pu = f$.  Then ${}^2 \mathrm{WF}(u) \backslash {}^2 \mathrm{WF}(f)$ is invariant under Hamiltonian flow at $SN(\mathcal{C})$.
\end{theorem}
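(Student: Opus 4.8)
The plan is to mimic the proof of Hörmander's real principal type propagation theorem, transplanted into the second microlocal calculus $\Psi_{2,h}(\mathcal{C})$ via a positive commutator argument. The goal is to show that if $q\in SN(\mathcal{C})$ lies on an integral curve of the Hamilton vector field $H_{{}^2\sigma_\mathrm{pr}(P)}$ (with respect to the second microlocal symplectic structure on the blown-up space), and some point on that curve is not in ${}^2\mathrm{WF}(u)$ while $q$ lies outside ${}^2\mathrm{WF}(f)$, then $q\notin {}^2\mathrm{WF}(u)$.

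First I would set up a test operator $A = A_h\in\Psi_{2,h}(\mathcal{C})$ whose second principal symbol $a$ is a square, $a = b^2$, supported in a small conic neighborhood of the flow segment from the ``good'' point to $q$, and constructed so that $H_{{}^2\sigma_\mathrm{pr}(P)} a$ has a definite sign (say $\leq 0$) up to a term supported where regularity is already known; this is the standard construction of propagating the cutoff along the bicharacteristic. Concretely, one takes $a$ of the form $\chi(\text{transverse variables})\,\psi(\text{flow parameter})$ with $\psi'\le 0$, plus an error localized near the initial point where $u$ is assumed second-microlocally regular. The real-principal-type hypothesis — that ${}^2\sigma_\mathrm{pr}(P)$ is real — is exactly what makes $\mathrm{Im}\, P$ lower order, so that $\frac{i}{h}[P,A^*A]$ has second principal symbol $H_{{}^2\sigma_\mathrm{pr}(P)}(a^2) \cdot (\text{real})$, with no obstruction from the skew-adjoint part. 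Pairing $\langle \frac{i}{h}[P,A^*A]u,u\rangle$ two ways — expanding the commutator using $Pu=f$ on one side, and using the symbol calculus (sharp Gårding on the blown-up space) on the other — yields the standard inequality: a negative-definite ``propagated'' term controlled by an $f$-term plus lower-order/error terms, forcing $\|Bu\|_{L^2}$ bounded where $B$ has symbol $b$, i.e.\ $q\notin{}^2\mathrm{WF}(u)$. A regularization argument (inserting $(1+h^{-1}\epsilon\langle\cdot\rangle)^{-N}$-type weights, or working with symbols of slightly negative order and letting the order tend to $0$) handles the a priori non-membership of $u$ in the relevant space.

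The main obstacle I anticipate is purely at the level of the second microlocal calculus rather than the commutator bookkeeping: one must verify that all the operations used — composition, the existence of a symbol calculus with a principal symbol map, sharp Gårding, and above all the fact that $\frac{i}{h}[P,Q]\in\Psi_{2,h}(\mathcal{C})$ with second principal symbol the Poisson bracket $\{{}^2\sigma_\mathrm{pr}(P),{}^2\sigma_\mathrm{pr}(Q)\}$ — all hold in $\Psi_{2,h}(\mathcal{C})$, including uniformly up to the front face $SN(\mathcal{C})$ of the blowup, where the symbol classes degenerate. In particular the Hamilton vector field $H_{{}^2\sigma_\mathrm{pr}(P)}$ must be shown to be tangent to the front face (so that "flow at $SN(\mathcal{C})$" is well-defined as an intrinsic dynamical system), and the weight/order filtration of the calculus must be preserved under it; these are the structural facts that make the otherwise-routine argument go through. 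Once those are in hand, the argument is the classical one.

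I would present it as: (1) reduce to a local statement near a bicharacteristic segment via a partition of unity and the intrinsic nature of ${}^2\mathrm{WF}$; (2) construct the escape/test symbol $a=b^2$ with the one-sided derivative estimate $H_{{}^2\sigma_\mathrm{pr}(P)} a \le -b^2 + c$ with $c$ supported in the known-regular region, including the regularizing parameter; (3) form the positive commutator, compute $\tfrac{i}{h}[P,A^*A]$ via the calculus, and pair against $u$, using $Pu=f$; (4) apply sharp Gårding to the error terms and take the regularization limit; (5) conclude $Bu = O(h^\infty)$ in $L^2$, hence $\mathrm{WF}_B$-point $q\notin {}^2\mathrm{WF}(u)$, and iterate along the full curve by a connectedness argument. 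The only genuinely new content over Hörmander is Step 3's reliance on the second microlocal composition theorem near the front face; everything else is a faithful adaptation.
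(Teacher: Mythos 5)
Your skeleton is indeed the paper's: a positive commutator argument with a commutant microsupported near a bicharacteristic segment, paired against $u$ using $Pu=f$, concluding $\|Bu\|_{L^2}<\infty$ and iterating along the curve (the paper carries this out as Theorem \ref{primary propagation}, with the a priori regularity handled not by inserted weights but by an induction that improves ${}^2\mathrm{WF}^{\alpha,s}(u)$ in half-order steps, starting from $u\in I^{-\infty}_{(s)}(\mathcal{C})$). However, two of the points you defer to ``structural verification'' are exactly where your outline has genuine gaps. First, the tangency you propose to verify is false as stated: for a general real ${}^2\sigma_{m,l}(P)$ the Hamilton vector field on $S_\mathrm{pr}$ has a $\rho_\mathrm{ff}^{-1}$ part and is \emph{not} tangent to the front face $SN(\mathcal{C})$; the paper must rescale, replacing $H_{p_0}$ by $\overline{H}_{p_0}=\rho_\mathrm{ff}^{\,l-m+1}H_{p_0}$ (Remark \ref{rmk:scaling}), and the invariance in Theorem \ref{primary propagation simple} is invariance under this rescaled flow. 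Your commutant construction (coordinates straightening the flow, with $SN(\mathcal{C})$ a coordinate hypersurface) only makes sense after this rescaling, so it has to be built into both the statement and the proof rather than checked as a property of $H_{{}^2\sigma_\mathrm{pr}(P)}$ itself.

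Second, your claim that reality of the principal symbol leaves ``no obstruction from the skew-adjoint part,'' with remaining errors handled by sharp G\aa rding, does not survive the bookkeeping. Since only ${}^2\sigma_{m,l}(P)$ is real, one has $P-P^*\in\Psi^{m-1,l}_{2,h}(\mathcal{C})$, and in the quantity $-i(A^*AP-P^*A^*A)$ the term $-i(P-P^*)A^*A$ contributes a symbol $g\,a^2$ at \emph{the same order} $(2\alpha,2s)$ as $\overline{H}_{p_0}(a^2)$, supported on all of $\mathrm{supp}\,a$ where no regularity of $u$ is yet known; the corresponding pairing $\langle Au,G^*Au\rangle$ is not a priori finite, so it cannot be treated as a G\aa rding-type error (and no sharp G\aa rding inequality is established for $\Psi_{2,h}(\mathcal{C})$ in any case). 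The paper's fix is quantitative: the commutant carries the factor $\chi_0(s)=e^{-M/s}$, so that $b^2$, which is proportional to $\lambda\chi_0\chi_0'$, dominates any fixed multiple of $a^2$ once $M$ is large (the inequality $\chi_0\le 2\lambda\chi_0'$), after factoring through a globally elliptic $T\in\Psi^{(m-1)/2,l/2}_{2,h}(\mathcal{C})$ with parametrix and controlling the residual pairings by Lemma \ref{lem:bounding}. A generic monotone cutoff $\psi$ with $\psi'\le 0$, as in your step (2), gives no such domination, so the $g\,a^2$ term (and likewise the cross term with $f$) cannot be absorbed; without the exponential weight or an equivalent device your inequality does not close.
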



For the next result, we consider an ordinary semiclassical PsDO $P\in\Psi_h(\mathbb{T}^n)$, regarded as an element of the second microlocal calculus.  Further, we require that the principal symbol of $P$ depends only on the fiber variables $\xi$, and that its subprincipal symbol vanishes.  We calculate the Hamiltonian vector field $H$ for ${}^2 \sigma_\mathrm{pr}(P)$.  Then we Taylor expand $H$ at $SN(\mathcal{C})$: if $SN(\mathcal{C}) = \{\rho=0\}$, then $H = H_1 + \rho H_2 + O(\rho^2)$.

\begin{theorem} \label{secondary propagation simple}
Assume $P\in\Psi_h(\mathbb{T}^n)$ has real principal symbol depending only on the fiber variables in the cotangent bundle, that its subprincipal symbol vanishes, and that $Pu = O_{L^2(\mathbb{T}^n)}(h^\infty)$.  Then ${}^2 \mathrm{WF}(u)\cap SN(\mathcal{C})$ is invariant under both $H_1$ and $H_2$.
\end{theorem}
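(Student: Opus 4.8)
The plan is to establish the two invariances separately; the one for $H_1$ will in fact be a direct consequence of Theorem~\ref{primary propagation simple}, so the real work is the $H_2$ statement. The organizing remark is that, because $p=\sigma_\mathrm{pr}(P)$ depends only on $\xi$ and $\mathcal{C}=\{\mathbf{v}_1\cdot\xi=\dots=\mathbf{v}_d\cdot\xi=0\}$, the Hamiltonian vector field $H_p=\sum_j(\partial_{\xi_j}p)\,\partial_{x_j}$ annihilates every $\mathbf{v}_k\cdot\xi$, hence preserves the front-face defining function $\rho$ and is tangent to $SN(\mathcal{C})$. Consequently each term $H_1$, $H_2$ of $H=H_1+\rho H_2+O(\rho^2)$ is a genuine (tangent) vector field on $SN(\mathcal{C})$ of the form $\sum_j c_j\,\partial_{x_j}$; the coefficients of $H_1$ depend only on the base variables of $\mathcal{C}$ while those of $H_2$ also depend on the sphere variable, and a short computation (again using $p=p(\xi)$) gives $[H_1,H_2]=0$. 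For the $H_1$ statement: since ${}^2\mathrm{WF}(u)\cap SN(\mathcal{C})$ lies over the characteristic set of $p$ by elliptic regularity, we may replace $P$ by a cutoff $\chi P\chi$ with compactly supported symbol, legitimately an element of $\Psi_{2,h}(\mathcal{C})$; its second principal symbol is the real function $p$ pulled back to the blow-up, and the Hamiltonian flow it generates at $SN(\mathcal{C})$ is precisely that of $H_1$. As $Pu=O_{L^2}(h^\infty)$ has empty second wavefront set, Theorem~\ref{primary propagation simple} gives that ${}^2\mathrm{WF}(u)\cap SN(\mathcal{C})$ is $H_1$-invariant.

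For $H_2$-invariance I would push the commutator construction one order deeper at the front face. Note first that $H_1$-invariance already disposes of the ``irrational'' slices: on the fibers of $SN(\mathcal{C})\to\mathcal{C}$ over which the $H_1$-orbit is dense, $H_1$-invariance forces ${}^2\mathrm{WF}(u)$ to be empty or the whole fiber, so $H_2$-invariance is automatic there. On the remaining slices, $H_1$ and $H_2$ are commuting translation flows, so near a given $q_0\in SN(\mathcal{C})\setminus{}^2\mathrm{WF}(u)$ there are coordinates in which $H_1=\partial_{t_1}$, $H_2=\partial_{t_2}$, with the remaining coordinates $z$ transverse; by the usual argument it then suffices to propagate second-microlocal regularity a short distance forward along $H_2$ from $q_0$. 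Here $H_1$-invariance permits me to use an $H_1$-invariant commutant: choose symmetric $A\in\Psi_{2,h}(\mathcal{C})$ with ${}^2\sigma_\mathrm{pr}(A)=a=b^2$, $b=\chi(t_2)\psi(z)$, independent of $t_1$ and of $\rho$, elliptic near $q_0$, microsupported in a thin tube about the $H_1$-saturation of the segment, with $\chi$ arranged so that $\partial_{t_2}(b^2)=-c^2+e^2$ where $c$ is elliptic on the downstream part and $e$ is supported where $u$ is already known to be second-microlocally regular (near $q_0$, and along the earlier part of the segment via the standard bootstrap).

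Since $a$ is $H_1$-invariant on $SN(\mathcal{C})$ and independent of $\rho$, $H_p a$ vanishes on $SN(\mathcal{C})$, so $\tfrac{i}{h}[P,A]$ drops one level in the front-face filtration; expanding $H_p=H_1+\rho H_2+O(\rho^2)$ and tracking the lower-order symbolic contributions, its leading front-face symbol at this level should be $H_2 a$ plus a term proportional to the subprincipal symbol of $P$ plus negligible or sign-favourable terms. The subprincipal symbol vanishes by hypothesis, so modulo such terms $\tfrac{i}{h}[P,A]$ is a second-microlocal quantization of $H_2(b^2)=-c^2+e^2$. To close, pair with $u$: since $P$ is formally self-adjoint (real symbol, no subprincipal part), from $Pu=O_{L^2}(h^\infty)$ (hence $P^*u=O_{L^2}(h^\infty)$) one gets $\langle\tfrac{i}{h}[P,A]u,u\rangle=O(h^\infty)$; meanwhile sharp G\aa rding and the uniform $L^2$-bound on $u$ give $\langle\tfrac{i}{h}[P,A]u,u\rangle\le-\|Cu\|^2+\|Eu\|^2+O(h^\infty)$ with $\|Eu\|^2=O(h^\infty)$. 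A standard inductive improvement of the regularity order then yields $Cu=O_{L^2}(h^\infty)$, i.e.\ the downstream part of the segment is disjoint from ${}^2\mathrm{WF}(u)$; applying the same to $-P$ and to the reversed flow gives full $H_2$-invariance.

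The step I expect to be the main obstacle is the symbolic bookkeeping in the third paragraph: verifying that, once the $H_1$-part of $H_p a$ has been killed at the front face by the choice of commutant, the surviving contribution to $\tfrac{i}{h}[P,A]$ at the next level of the front-face filtration is exactly $H_2 a$ together with a single term proportional to the subprincipal symbol of $P$ --- that no other lower-order parts of $P$, nor cross terms between the $h$-expansion and the $\rho$-expansion, intrude there; that the renormalization in question is internal to $\Psi_{2,h}(\mathcal{C})$ and compatible with the sharp G\aa rding positivity; and that the sphere directions of $SN(\mathcal{C})$, along which $H_2$ has non-constant coefficients, cause no trouble in constructing $b$ (so the construction is carried out fiberwise over $\mathcal{C}$). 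This is precisely where the hypotheses ``$p=p(\xi)$'' and ``subprincipal symbol $\equiv 0$'' are consumed.
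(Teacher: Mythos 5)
Your overall architecture (get $H_1$-invariance from the first-order propagation theorem using tangency of $H_{p_0}$ to $SN(\mathcal{C})$ when $p_0=p_0(\xi)$, then run a positive commutator argument one order deeper at the front face with an $H_1$-invariant commutant, consuming $\mathrm{sub}(P)\equiv 0$ there) is the same as the paper's, and the $H_1$ half is essentially right --- though note you should invoke the unrescaled version (\autoref{take two}) rather than \autoref{primary propagation}: for $m=l$ the rescaled field $\rho_\mathrm{ff}H_{p_0}$ of \autoref{primary propagation} vanishes identically on $SN(\mathcal{C})$ when $p_0=p_0(\xi)$, so that theorem by itself says nothing at the front face. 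The genuine gap is exactly the step you defer to the last paragraph: the bookkeeping showing that, after the principal contribution $H_1(\widetilde a_1)$ is killed, the full commutator $i\bigl((h^{-1}P)^*A-A(h^{-1}P)\bigr)$ equals the quantization of $\rho_\mathrm{ff}\widetilde H_2(\widetilde a_1)$ \emph{up to errors that are two orders better at the front face} (plus a residual term). A priori the remainder $L$ lies only in $\Psi^{2m-1,2l+2}_{2,h}(\mathcal{C})$: it gains in the $h$/differential filtration but is two front-face orders \emph{worse} than the square term $C^*C\in\Psi^{2m,2l}_{2,h}(\mathcal{C})$ you are trying to bound, so it can be absorbed neither by the inductive half-order improvement of the wavefront order nor by an $O(h^\infty)$ claim; your inequality with an $O(h^\infty)$ error after ``sharp G{\aa}rding'' is not justified (no G{\aa}rding inequality is established for $\Psi_{2,h}(\mathcal{C})$ in this calculus, and the paper deliberately avoids needing one by building exact squares via the smeared commutant $a_1(p)=-\int_0^\delta(1-s)\,a_0(\exp(-s\widetilde H_2)p)\,ds$, which produces $\int_0^\delta B_s^*B_s\,ds + C^*C - D^*D$ on the nose).

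Closing that gap is the content of the paper's ``Decomposition of $L$'' lemma, and it is where the hypotheses are actually spent: one writes the Weyl symbol of $P$ as $p=p_0(\widetilde\xi)+p_1$ with $p_1=O(h^2)$ (this uses $\mathrm{sub}(P)\equiv 0$, not merely ``the subprincipal term cancels at one slice''), Taylor expands $p_0$ at $\mathcal{C}$ in $\widetilde\xi'$ so that $\partial^\gamma_{\widetilde x}p=O(\rho_\mathrm{sf}^2\rho_\mathrm{ff}^2)$ for $\gamma\neq 0$, and uses the symbol-class computation of Example \ref{ex:symbol class} together with the fact that $p$ is smooth on the blown-down space $T^*\mathbb{T}^n\times[0,1)_h$ (unlike the commutant symbol $a$) to check, slice by slice in the Weyl expansion, that every term with $|\alpha+\beta|>1$ and every term involving $p_1$ lands in $S^{2m-1,2l}(S_\mathrm{tot})$, yielding $L=L_1+L_2$ with $L_1\in\Psi^{2m-1,2l}_{2,h}(\mathcal{C})$ and $L_2\in\Re^{2l+2}$. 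Only then does $\langle L_1u,u\rangle$ become controllable by the inductive hypothesis on ${}^2\mathrm{WF}^{m-1/2,l}(u)$, and $\langle L_2u,u\rangle$ by the grading restriction $l\le-1$ (a restriction your statement of the result also suppresses). Without this lemma the commutator argument does not close, so as written your proposal identifies, but does not supply, the decisive step.
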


\noindent This crucially hinges on $P$ being an ordinary semiclassical PsDO, so having a total symbol that is smooth even on the blown down space $T^* \mathbb{T}^n$.

The author is supported in part by NSF RTG grant 1045119.  He would like to thank Jared Wunsch for introducing him to the problems addressed in this paper, and for helpful discussions throughout.  The author is grateful also to Dean Baskin and Alejandro Uribe.  Much of this work was carried out at Northwestern University.

\section{Preliminaries}

Let $(M,\omega)$ be a $2n$-dimensional symplectic manifold.  We will be interested in the case $M = T^* \mathbb{T}^n$.  For a submanifold $\mathcal{C}\subset M$, consider the \emph{symplectic orthocomplement} $(T\mathcal{C})^\omega$ of the tangent bundle $T\mathcal{C}$, defined as the union of its fibers:
\[
(T_p \mathcal{C})^\omega := \{ v\in T_p M \ | \ \omega(v,w)=0 \ \mathrm{for \ all} \ w\in T_p \mathcal{C} \}.
\]

\begin{definition}[Coisotropic submanifold]
$\mathcal{C}$ is said to be a \emph{coisotropic} submanifold of $M$ if $(T\mathcal{C})^\omega$ is a subbundle of $T\mathcal{C}$; that is, $\mathcal{C}$ is coisotropic if $(T_p \mathcal{C})^\omega$ is a subspace of $T_p \mathcal{C}$ for each $p\in\mathcal{C}$.
\end{definition}

In particular, this means that $\dim{(\mathcal{C})}\geq n$.  If $\dim{(\mathcal{C})} = n$, then $\mathcal{C}$ is a Lagrangian submanifold.

\subsection{Coisotropic regularity}

For $m,k\in\mathbb{R}$, let $\Psi^{m,k}_h(\mathbb{T}^n) = h^{-k} \Psi^m_h(\mathbb{T}^n)$ be the space of semiclassical pseudodifferential operators of differential order $m$.  For treatments of the semiclassical pseudodifferential calculus, see \cite{DS,Ma,Zw}.  Let
\begin{equation} \label{symbol inequality}
S^m (T^* \mathbb{T}^n) := \{a(x,\xi)\in C^\infty (T^* \mathbb{T}^n) \ | \ \forall\alpha,\beta \ \exists C_{\alpha\beta}>0 \ \mathrm{such \ that} \ |\partial^\alpha_x \partial^\beta_\xi a| \leq C_{\alpha\beta}\left\langle \xi \right\rangle^{m-|\beta|}\}.
\end{equation}
The symbol class $S^m$ is due to J.J.\ Kohn and L.\ Nirenberg \cite{KN}.  Then $\Psi^{m,k}_h(\mathbb{T}^n)$ consists \emph{locally} of quantizations of symbols in $h^{-k} C^\infty ([0,1)_h;S^m(T^* \mathbb{T}^n))$.

Let $A\in\Psi^{m,k}_h(\mathbb{T}^n)$.  We will generally not be interested in the differential order of $A$, which corresponds to the behavior of its total symbol at infinity in the fibers of the cotangent bundle.  Thus, we define the subalgebra $\widetilde{\Psi}^k_h(\mathbb{T}^n) \subset \Psi^{-\infty,k}_h(\mathbb{T}^n)$ consisting locally of quantizations of $h^{-k} C^\infty_c (T^* \mathbb{T}^n \times [0,1)_h)$ (i.e., the total symbols are compactly supported in the fibers) plus quantizations of $h^\infty C^\infty([0,1);S^{-\infty}(T^* \mathbb{T}^n))$ (symbols residual in both semiclassical and differential filtrations).  This is the same subalgebra considered in the motivating paper \cite{VW}.


\subsubsection{Characteristic operators}

Let $\mathcal{C}$ be any coisotropic submanifold of $T^* \mathbb{T}^n$.  Consider
\[
\mathfrak{M}_\mathcal{C} := \{A\in\Psi_h^0(\mathbb{T}^n) \ | \ \sigma_0(A)\restriction_\mathcal{C} = 0\}.
\]
We call this the module of \emph{characteristic} operators associated to $\mathcal{C}$.  An application of Taylor's theorem proves that $\mathfrak{M}_\mathcal{C}$ is finitely generated.  It is closed under commutators, as well.  To see this, take $A,B\in\mathfrak{M}_\mathcal{C}$.
Recall that $[A,B]$ has principal symbol $\frac{h}{i}\{\sigma_0(A),\sigma_0(B)\}$.  Since $\{\sigma_0(A),\sigma_0(B)\} = \mathcal{L}_{H_{\sigma_0(A)}}(\sigma_0(B))$, $H_{\sigma_0(A)}$ is tangent to $\mathcal{C}$, and $\sigma_0(B)$ is constant on $\mathcal{C}$, then $\{\sigma_0(A),\sigma_0(B)\}\restriction_\mathcal{C} = 0$.

As an example, if $\mathcal{C} = \{\xi_1=\ldots=\xi_d=0\}$, the module $\mathfrak{M}_\mathcal{C}$ is generated by the differential operators $h D_{x_j}$, $1\leq j\leq d$.

\begin{remark}
If we write $u_h\in L^2(\mathbb{T}^n)$, we mean that $u_h$ lies in $L^2$ uniformly in $h$ as $h\downarrow 0$.  We will suppress $h$ dependence of families of distributions.  Likewise, we will write $P$ instead of $P_h$ when referring to the family of operators $P_h$.
\end{remark}

\subsubsection{Definition of coisotropic regularity}

\begin{definition}[Coisotropic regularity]
Let $\mathcal{C}$ be a coisotropic submanifold of $T^* \mathbb{T}^n$.  A distribution $u\in L^2(\mathbb{T}^n)$ is said to exhibit \emph{coisotropic regularity} with respect to $\mathcal{C}$ at the point $(x,\xi)\in\mathcal{C}$ if $(x,\xi)$ has a neighborhood $U\subset T^* \mathbb{T}^n$ such that for all $P_j\in\mathfrak{M}_\mathcal{C}$ microsupported in $U$, we have
\begin{equation} \label{iterated regularity condition}
P_1\ldots P_k u\in h^k L^2(\mathbb{T}^n)
\end{equation}
(for all $k$).

Suppose that $u$ has coisotropic regularity everywhere on $\mathcal{C}$.  Then we call $u$ a \emph{coisotropic distribution} or simply say that $u$ is coisotropic.  Or, if condition \eqref{iterated regularity condition} holds only for $k_0\leq k$ for some $k$, then $u$ has coisotropic regularity of order $k$ at $(x,\xi)$.
\end{definition}

\begin{remark}
(1) We may write the defining condition for coisotropic regularity of $u\in L^2(\mathbb{T}^n)$ equivalently as
\[
(h^{-1} P_1) \ldots (h^{-1} P_k) u \in L^2(\mathbb{T}^n), \ \forall k, \ P_j\in\mathfrak{M}_\mathcal{C}.
\]
Note that since each $P_j$ is a semiclassical PsDO of order 0 in $h$, it is a bounded operator on $L^2$, so necessarily $P_1 \ldots P_k u \in L^2(\mathbb{T}^n)$.  By contrast, $h^{-1} P_j\in\Psi^1_h(\mathbb{T}^n)$ will generally \emph{not} be $L^2$ bounded.

(2) We are certainly not the first to define a notion of regularity by means of iterated application of characteristic operators.  The original iterated regularity characterization, of conic Lagrangian distributions, is given by L.\ H\"{o}rmander and R.\ Melrose \cite[Section~25.1]{Ho85}.
\end{remark}

\begin{example}
To determine whether some $u\in L^2(\mathbb{T}^3)$ is (globally) coisotropic with respect to $\mathcal{C} = \{\xi_1 = \xi_2 = 0,\xi_3\in\mathbb{R}\}$, amounts to checking for $L^2$-Sobolev regularity \emph{in the directions} $x_1$ and $x_2$.  For instance, $e^{i x_3 / h}$ is a coisotropic distribution with respect to $\mathcal{C}$, but $e^{i x_1 / h}$ is nowhere coisotropic at $\mathcal{C}$.
\end{example}

\subsubsection{Coisotropic regularity, extended}

For $s\in\mathbb{R}$, we may consider the set of distributions that are coisotropic of order $k\in\mathbb{Z}_{\geq 0}$ relative to $h^s L^2(\mathbb{T}^n)$:
\begin{equation} \label{eq:distributions}
I^k_{(s)}(\mathcal{C}) := \{u \ | \ h^{-j-s} A_1 \ldots A_j u \in L^2(\mathbb{T}^n) \ \forall A_i\in\mathfrak{M}_\mathcal{C}, 0\leq j\leq k\}.
\end{equation}
However, because $\mathfrak{M}_\mathcal{C}$ is (locally) finitely generated, let $\{B_1,\ldots,B_J\}$ be a generating set.  To check whether $u$ lies in $I^k_{(s)}(\mathcal{C})$, it suffices to check $h^{-|\beta|-s} \mathbf{B}^\beta u \in L^2(\mathbb{T}^n)$, where $\mathbf{B}^\beta = B_1^{\beta_1} \cdots B_J^{\beta_J}$ and $0\leq |\beta|\leq k$.

We may extend the definition of $I^k_{(s)}(\mathcal{C})$ to $k\in\mathbb{R}$ by interpolation (to $k\in\mathbb{R}_{\geq 0}$) and duality (to negative $k$).  For each $s$, $I^\infty_{(s)}(\mathcal{C})$ is a space of semiclassical coisotropic distributions.

\subsection{Real blowup of submanifolds}

For a thorough treatment of this topic, see R.\ Melrose's notes in \cite{Me-1} and \cite[Chapter~5]{Me-3}.  If $M$ is a manifold without boundary and $Y$ is any submanifold of $M$, then $Y$ blown up in $M$ is $[M;Y] = M \backslash Y \cup SN(Y)$.  Here, $SN(Y)$ is the spherical (unit) normal bundle of $Y$.  $[M;Y]$ is a manifold with boundary, and the boundary of $[M;Y]$ is $SN(Y)$.

Next, suppose $M$ is a manifold with boundary and $Y$ is a submanifold \emph{lying in the boundary of $M$}.  Then the blowup of $M$ along $Y$ is $[M;Y] = M \backslash Y \cup SN^+ (Y)$, where $SN^+ (Y)$ is the \emph{inward pointing} part of the spherical normal bundle to $Y$ in $M$.  $SN^+ (Y)$ is the \emph{front face} of the blowup, and the \emph{side face} is $\partial M \backslash Y$.  There is a smooth \emph{blowdown map} $\beta: [M;Y] \longrightarrow M$ projecting the front face to $Y$; $\beta$ is a diffeomorphism away from the front face.  Thus, crucially, there are more smooth functions on the blown up space than on the original manifold.  In our case, $M = T^* \mathbb{T}^n \times [0,1)_h$ (which has boundary $T^* \mathbb{T}^n \times \{h=0\}$) and $Y = \mathcal{C} \times \{h=0\}$.

\subsection{Linear coisotropics}

We study linear coisotropics in $T^* \mathbb{T}^n$, defined as follows:

\begin{definition}
A $d$-codimensional \emph{linear coisotropic submanifold} $\mathcal{C}\subset T^* \mathbb{T}^n$ has the form
\[
\mathcal{C} = \mathcal{C}(\mathbf{v}_1,\ldots,\mathbf{v}_d) = \mathbb{T}^n_x \times \{\mathbf{v}_1\cdot\xi = \ldots = \mathbf{v}_d\cdot\xi = 0\},
\]
where $\{\mathbf{v}_1,\ldots,\mathbf{v}_d\}\subset\mathbb{R}^n$ is linearly independent over $\mathbb{R}$.
\end{definition}

A simple linear coisotropic is $\{\xi_1 = \ldots = \xi_d = 0\}$.  In fact, locally every coisotropic is of this form \cite[Theorem~21.2.4]{Ho85}.

\subsection{Second microlocal symbols}

The domain of our total symbols is the manifold with corners $S_\mathrm{tot} := [T^* \mathbb{T}^n \times [0,1)_h;\mathcal{C} \times \{h=0\}]$.  The corner occurs at the intersection of the front face $SN^+(\mathcal{C}\times\{h=0\})$ and the side face.  Second principal symbols (defined in Section \ref{sec:second principal}) live on $S_\mathrm{pr} := [T^* \mathbb{T}^n;\mathcal{C}]$, which is identifiable with the side face of $S_\mathrm{tot}$.

\begin{example}
In Figure \ref{fig:totsymb}, we see the total symbol space for the coisotropic $\mathcal{C} = \mathbb{T}^2 \times \{\xi_1\in\mathbb{R},\xi_2 = 0\}$, with base variables omitted.  Notice that the front face is a \emph{half} cylinder.  Since we are not interested in negative values of $h$, only (unit) normal vectors pointing into the interior of $T^* \mathbb{T}^2 \times [0,1)_h$ are considered.
\end{example}

\begin{figure}[htb]
 \centering 
 \def\svgwidth{200pt}
 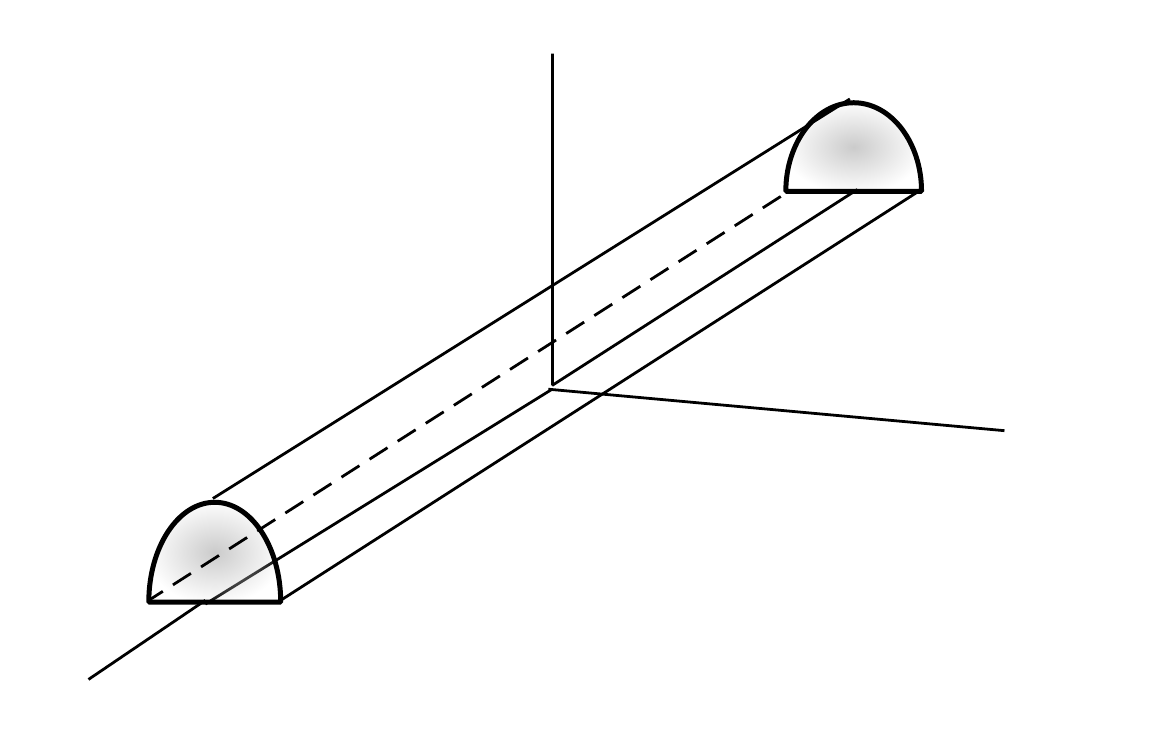
 \caption{$S_\mathrm{tot}$ for $\mathcal{C} = \{\xi_2 = 0\}$}
 \label{fig:totsymb} 
\end{figure}

In $S_\mathrm{pr}$, we replace $\mathcal{C}\subset T^*\mathbb{T}^n$ with its \emph{full} spherical normal bundle.  Let $\rho_{\mathrm{ff}}$ and $\rho_{\mathrm{sf}}$ denote boundary defining functions for the front and side faces of $S_\mathrm{tot}$, respectively.

\begin{definition}[Total symbols]
For $m,l\in\mathbb{R}$, let $$S^{m,l}(S_\mathrm{tot}) := \rho_{\mathrm{sf}}^{-m} \rho_{\mathrm{ff}}^{-l} C_{c}^{\infty}(S_\mathrm{tot}).$$
\end{definition}

We define
\[
S^{-\infty,l}(S_\mathrm{tot}) := \bigcap_{m\in\mathbb{R}} S^{m,l}(S_\mathrm{tot}).
\]

\begin{remark}
Here we clarify that we are considering \emph{two different} spherical normal bundles.  First, there is the inward-pointing spherical normal bundle $SN^+ (\mathcal{C}\times\{h=0\})$, which is the front boundary face of $S_\mathrm{tot}$.  Second, there is the full spherical normal bundle to $\mathcal{C}\subset T^* \mathbb{T}^n$, namely $SN(\mathcal{C})$.  The principal symbol space $S_\mathrm{pr}$ is a manifold with boundary, with $\partial S_\mathrm{pr} = SN(\mathcal{C})$.  Finally, $SN(\mathcal{C})$ is identifiable with the corner of $S_\mathrm{tot}$.
\end{remark}

Let $\dbar\xi := (2\pi h)^{-n} d\xi$.  Let ${}^h \mathrm{Op_l}$, ${}^h \mathrm{Op_W}$, and ${}^h \mathrm{Op_r}$ represent semiclassical left, Weyl, and right quantization, respectively: For $a\in S^{m,l}(S_\mathrm{tot})$,
\begin{align*}
{}^h \mathrm{Op_l}(a) &= \int{e^{\frac{i}{h}(x-y)\cdot\xi} \chi(x,y) a(x,\xi;h) \ \dbar\xi}; \\
{}^h \mathrm{Op_W}(a) &= \int{e^{\frac{i}{h}(x-y)\cdot\xi} \chi(x,y) a\left(\frac{x+y}{2},\xi;h\right) \dbar\xi}; \ \mathrm{and} \\
{}^h \mathrm{Op_r}(a) &= \int{e^{\frac{i}{h}(x-y)\cdot\xi} \chi(x,y) a(y,\xi;h) \ \dbar\xi}.
\end{align*}
Also, if $a\in C^\infty(\mathbb{T}^n;S^{m,l}(S_\mathrm{tot}))$, then
\begin{equation} \label{general quantization}
I_h(a) := \int{e^{\frac{i}{h}(x-y)\cdot\xi}\chi(x,y)a(x,y,\xi;h) \ \dbar\xi}.
\end{equation}

$\chi$ is any cutoff function supported in a neighborhood of the diagonal, and $\chi\equiv 1$ in a smaller neighborhood of the diagonal.  The purpose of $\chi$ is to make sense of the difference $(x-y)$ appearing in the phase.  A stationary phase argument shows that the choice of a particular $\chi$ does not matter, up to $O(h^\infty)$.


Our calculus will be denoted $\Psi_{2,h}(\mathcal{C})$.  The calculus will consist of, say, left quantizations of elements of $S^{m,l}(S_\mathrm{tot})$ ($m,l\in\mathbb{R}$), as well as \emph{residual operators} to be introduced in Section \ref{section on residual ops}.

\section{\textsc{Composition and Invariance} \label{section on residual ops}}

Let $\mathcal{C} = \mathbb{T}^n \times \{\mathbf{v}_1\cdot\xi = \ldots = \mathbf{v}_d\cdot\xi = 0\}$, with $\mathbf{v}_i\in\mathbb{R}^n$ linearly independent.

\subsection{Residual operators}

We next define the ``residual'' elements which, along with quantizations of symbols, comprise the second microlocal calculus $\Psi_{2,h}(\mathcal{C})$.  Residual operators play the same role as smoothing operators in the $C^\infty$ pseudodifferential calculus.

Let $B_j = \mathbf{v}_j \cdot h D_x$, where $D_x = (D_{x_1} \ \cdots \ D_{x_n})^t$.  Then $B_1,\ldots,B_d\in\Psi^0_h(\mathbb{T}^n)$ generate the module $\mathfrak{M}_\mathcal{C}$ of operators characteristic on $\mathcal{C}$, and let $\mathbf{B}^\beta := B_1^{\beta_1}\cdots B_d^{\beta_d}$ be a monomial formed from these generators.  Also, let $\widetilde{B}_j = \mathbf{v}_j \cdot D_x$ and $\widetilde{\mathbf{B}}^\beta = \widetilde{B}_1^{\beta_1}\cdots\widetilde{B}_d^{\beta_d}$.

\begin{definition}[Residual operator]
For $l\in\mathbb{R}$, the bounded linear operator $R$ is in the residual space $\Re^l$ if: \begin{condition} \label{involutizing condition}
$R$ is \emph{involutizing}: for $u_h\in L^2(\mathbb{T}^n)$ and multi-indices $\beta,\gamma$, $R$ satisfies
\[
h^{-|\beta+\gamma| + l}\left(\mathbf{B}^\beta R \mathbf{B}^\gamma\right) u_h \in L^2(\mathbb{T}^n).
\]
(Since we are composing $\mathbf{B}$ on the right of $R$, if this condition is fulfilled, then the adjoint operator $R^*$ is also involutizing.)
\end{condition}
\end{definition}

Let $\Re := \bigcup_{l\in\mathbb{R}} \Re^l$.  Then:

\begin{definition}
The second microlocal calculus associated to $\mathcal{C}$ is
\[
\Psi_{2,h}(\mathcal{C}) := \Re + \bigcup_{m,l\in\mathbb{R}}{{}^h \mathrm{Op_l}(S^{m,l}(S_\mathrm{tot}))}.
\]
\end{definition}

We will show that $\Psi_{2,h}(\mathcal{C})$ is closed under composition.  The key result is a reduction theorem, \autoref{composition theorem}.  The calculus is also closed under asymptotic summation: if $A_j\in\Psi^{m-j,l}_{2,h}(\mathcal{C})$, then there exists $A\in\Psi^{m,l}_{2,h}(\mathcal{C})$ for which
\[
A - \sum_{j=0}^{N-1} A_j \in \Psi^{m-N,l}_{2,h}(\mathcal{C})
\]
for all $N$.

\subsection{Reduction and Composition}

\begin{theorem}[Right reduction] \label{composition theorem}
Let $a(x,y,\xi;h)\in C^\infty(\mathbb{T}^n;S^{m,l}(S_\mathrm{tot}))$.  Then there exists $b\in S^{m,l}(S_\mathrm{tot})$ such that $I_h(a)={}^h \mathrm{Op_r}(b)+R$, and the remainder $R$ belongs to $\Re^l$.
\end{theorem}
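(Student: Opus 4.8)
The plan is to run the standard reduction to right symbols, carried out uniformly up to the corner of $S_\mathrm{tot}$, and then to identify the resulting error as a residual operator. Write $\xi=(\xi',\xi'')$ with $\xi''=(\mathbf{v}_1\cdot\xi,\dots,\mathbf{v}_d\cdot\xi)$ the fibre variables transverse to $\mathcal{C}$, so that locally $S_\mathrm{tot}$ is the blowup of $\{\xi''=0,\ h=0\}$ in $T^*\mathbb{T}^n\times[0,1)_h$ and $h=\rho_{\mathrm{ff}}\rho_{\mathrm{sf}}$ up to a positive smooth factor. Away from the diagonal $\{x=y\}$ the phase $(x-y)\cdot\xi/h$ is nonstationary, $\xi$ being confined to a compact set, so repeated integration by parts in $\xi$ shows that all contributions from $\{x\neq y\}$ assemble into a smooth $O(h^\infty)$ operator, which lies in $\Re^{l'}$ for every $l'$; this reduces the problem to a single coordinate chart near the diagonal, inside $\{\chi\equiv 1\}$. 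There I would Taylor expand $a$ in its first slot at $x=y$,
\[
a(x,y,\xi;h)=\sum_{|\alpha|<N}\frac{(x-y)^\alpha}{\alpha!}\,\partial_x^\alpha a(y,y,\xi;h)+\sum_{|\alpha|=N}(x-y)^\alpha\,\tilde{a}_{N,\alpha}(x,y,\xi;h),
\]
where $\tilde{a}_{N,\alpha}\in C^\infty(\mathbb{T}^n;S^{m,l}(S_\mathrm{tot}))$ is an integral of $\partial_x^\alpha a$ over the segment from $y$ to $x$. Since $a$ has compact $\xi$-support there are no boundary terms, so replacing each $(x-y)_k$ by $hD_{\xi_k}$ and integrating by parts in $\xi$ converts the polynomial term into a right symbol $b_\alpha(y,\xi;h)$ --- up to a numerical constant, $\big[(hD_\xi)^\alpha\partial_x^\alpha a\big]\big|_{x=y}$ --- and leaves the $N$-th error as $I_h$ applied to $h^N\sum_{|\alpha|=N}D_\xi^\alpha\tilde{a}_{N,\alpha}$.

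The crux is the symbol bookkeeping on the blown-up space. The vector fields $\partial_x$ and $\partial_{\xi'}$ are untouched by the blowup, hence smooth and tangent to both boundary faces of $S_\mathrm{tot}$, and so preserve each $S^{m,l}(S_\mathrm{tot})$. In projective coordinates near the corner $\mathrm{ff}\cap\mathrm{sf}$ --- for instance $\rho_{\mathrm{ff}}=\xi''_1$, $\sigma_j=\xi''_j/\xi''_1$ for $2\le j\le d$, and $\rho_{\mathrm{sf}}=h/\xi''_1$ --- one computes that each $\partial_{\xi''_j}$ equals $\rho_{\mathrm{ff}}^{-1}$ times a smooth vector field tangent to both faces; thus $\partial_{\xi''_j}\colon S^{m,l}(S_\mathrm{tot})\to S^{m,l+1}(S_\mathrm{tot})$, costing exactly one power of $\rho_{\mathrm{ff}}$ and no power of $\rho_{\mathrm{sf}}$. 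Since $h=\rho_{\mathrm{ff}}\rho_{\mathrm{sf}}$, each pairing $hD_{\xi_k}$ therefore maps $S^{m,l}(S_\mathrm{tot})$ into $S^{m-1,l}(S_\mathrm{tot})$ --- it improves the $\rho_{\mathrm{sf}}$-order by one while leaving the $\rho_{\mathrm{ff}}$-order fixed. Consequently $b_\alpha\in S^{m-|\alpha|,l}(S_\mathrm{tot})$, and the $N$-th error operator is $I_h$ of an amplitude in $C^\infty(\mathbb{T}^n;S^{m-N,l}(S_\mathrm{tot}))$. By a standard Borel summation (the calculus being closed under asymptotic summation) there is $b\in S^{m,l}(S_\mathrm{tot})$ with $b-\sum_{|\alpha|<N}b_\alpha\in S^{m-N,l}(S_\mathrm{tot})$ for every $N$; subtracting, $R:=I_h(a)-{}^h \mathrm{Op_r}(b)$ is, modulo the smooth $O(h^\infty)$ operator from $\{x\neq y\}$, $I_h$ of an amplitude in $\bigcap_N C^\infty(\mathbb{T}^n;S^{m-N,l}(S_\mathrm{tot}))=C^\infty(\mathbb{T}^n;S^{-\infty,l}(S_\mathrm{tot}))$; after one further right-reduction, $R$ is ${}^h \mathrm{Op_r}$ of a symbol in $S^{-\infty,l}(S_\mathrm{tot})$ plus a smooth $O(h^\infty)$ operator.

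It then remains to verify that such an $R$ meets Condition~\ref{involutizing condition} with parameter $l$. Each generator $B_j=\mathbf{v}_j\cdot hD_x$ has semiclassical symbol $\xi''_j$, which on $S_\mathrm{tot}$ equals $\rho_{\mathrm{ff}}$ times a bounded smooth function. Integrating by parts to move $\mathbf{B}^\gamma$, a differential operator in $y$, onto the exponential --- where each factor $\mathbf{v}_j\cdot hD_y$ produces $-\xi''_j$ --- and likewise moving $\mathbf{B}^\beta$ onto the exponential from the left, one sees that $\mathbf{B}^\beta R\mathbf{B}^\gamma$ is $I_h$ of an amplitude in $C^\infty(\mathbb{T}^n;S^{-\infty,\,l-|\beta+\gamma|}(S_\mathrm{tot}))$ plus a smooth $O(h^\infty)$ operator; the terms in which the $x$- or $y$-derivatives fall on the amplitude or on $\chi$ are respectively better or $O(h^\infty)$. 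Multiplying by $h^{-|\beta+\gamma|+l}=(\rho_{\mathrm{ff}}\rho_{\mathrm{sf}})^{\,l-|\beta+\gamma|}$ and using that elements of $S^{-\infty,\bullet}(S_\mathrm{tot})$ vanish to infinite order at $\mathrm{sf}$, the amplitude of $h^{-|\beta+\gamma|+l}\mathbf{B}^\beta R\mathbf{B}^\gamma$ lies in $C^\infty(\mathbb{T}^n;S^{-\infty,0}(S_\mathrm{tot}))$, a smooth, compactly supported, bounded amplitude; hence it and the accompanying $O(h^\infty)$ term are bounded on $L^2(\mathbb{T}^n)$ by Calder\'{o}n--Vaillancourt. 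So $h^{-|\beta+\gamma|+l}\mathbf{B}^\beta R\mathbf{B}^\gamma u\in L^2(\mathbb{T}^n)$ for all $\beta,\gamma$ and all $u\in L^2(\mathbb{T}^n)$, i.e. $R\in\Re^l$, while $b\in S^{m,l}(S_\mathrm{tot})$, as claimed.

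I expect the main obstacle to be the second step: checking, in projective coordinates and uniformly across the corner $\mathrm{ff}\cap\mathrm{sf}$, that $hD_\xi$ preserves the scale $S^{\bullet,l}(S_\mathrm{tot})$ --- indeed gains a power of $\rho_{\mathrm{sf}}$ --- together with the companion fact that each $B_j$ strips off exactly one power of $\rho_{\mathrm{ff}}$. This is precisely where the linearity of $\mathcal{C}$ enters, through the symbols $\xi''_j$ of the generators $B_j$ being linear in the transverse fibre variables; it is the mechanism that keeps the reduced symbol in the same class $S^{m,l}(S_\mathrm{tot})$ rather than a strictly larger one, and it is also what forces the error to be residual rather than merely of lower order.
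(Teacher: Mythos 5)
Your proposal follows essentially the same route as the paper's proof: Taylor expansion of the amplitude at the diagonal, the key bookkeeping fact (the paper's \eqref{hard lift}) that $h\partial_\xi$ gains a power of $\rho_\mathrm{sf}$ while transverse $\partial_{\xi''}$-derivatives cost a power of $\rho_\mathrm{ff}$, Borel summation of the resulting right symbols, and verification of Condition~\ref{involutizing condition} for the error using that the symbols of the $B_j$ are $\rho_\mathrm{ff}$ times bounded functions, together with Calder\'on--Vaillancourt boundedness as in Proposition~\ref{boundedness prop}. Two steps, however, are stated incorrectly or too loosely, though neither changes the architecture.

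First, the claim that the contribution from $\{x\neq y\}$ is ``a smooth $O(h^\infty)$ operator, hence in $\Re^{l'}$ for every $l'$'' is false in this calculus, and it is inconsistent with your own bookkeeping in the next paragraph: nonstationary-phase integration by parts applies $h\partial_\xi$ to the amplitude, and in the transverse directions this gains only $\rho_\mathrm{sf}$, never a full power of $h=\rho_\mathrm{ff}\rho_\mathrm{sf}$. After arbitrarily many integrations by parts the off-diagonal amplitude lies only in $S^{-\infty,l}(S_\mathrm{tot})$, which is still of size $\rho_\mathrm{ff}^{-l}$ at the interior of the front face, where $\rho_\mathrm{sf}\sim 1$ and $h\sim\rho_\mathrm{ff}\to 0$. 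Concretely, a symbol such as $\psi\bigl(h/(\mathbf{v}_1\cdot\xi)\bigr)$ with $\psi\in C^\infty_c((0,\infty))$, suitably cut off in $\xi$, quantizes (after substituting $\xi=h\eta$) to an operator whose kernel is a fixed, non-concentrating function of $x-y$, so cutting off away from the diagonal leaves something of unit size, not $O(h^\infty)$. The correct and sufficient statement is that these off-diagonal pieces (and likewise the terms where derivatives fall on $\chi$) lie in $\Re^l$, exactly as in Proposition~\ref{quants are residual} or by the same Condition~\ref{involutizing condition} computation you run in your final paragraph. Second, identifying $R=I_h(a)-{}^h\mathrm{Op_r}(b)$ with $I_h$ of a single amplitude in $C^\infty(\mathbb{T}^n;S^{-\infty,l}(S_\mathrm{tot}))$ does not follow from what you have: for each $N$ you only know $R$ is the sum of an operator with amplitude in $C^\infty(\mathbb{T}^n;S^{m-N,l})$ and a right quantization of a symbol in $S^{m-N,l}$, and this decomposition depends on $N$ (invoking ``one further right-reduction'' at this point is circular). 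The repair is the paper's: for each fixed pair $\beta,\gamma$ choose $N\gtrsim|\beta+\gamma|+m-l$ and verify the involutizing estimate for that finite-order decomposition; since $\beta,\gamma$ are arbitrary, $R\in\Re^l$ follows. With these two repairs your argument is a correct rendition of the paper's proof.
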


Before we prove this theorem, we state and prove a result concerning boundedness.

\begin{prop}[$L^2$ boundedness] \label{boundedness prop}
Let $s\in C^{\infty}_{c}(\mathbb{T}^n \times S_\mathrm{tot})$.  Then $I_h(s)\in\Psi^{0,0}_{2,h}(\mathcal{C})$ is bounded on $L^2(\mathbb{T}^n)$.
\end{prop}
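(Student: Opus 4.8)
The plan is to reduce the statement to the classical Calder\'on--Vaillancourt theorem for semiclassical pseudodifferential operators with bounded symbols. The operator $I_h(s)$ is an oscillatory integral with amplitude $s(x,y,\xi;h)$ that is smooth and compactly supported on $\mathbb{T}^n \times S_\mathrm{tot}$; the only subtlety is that the variable $\xi$ ranges over the blown-up space $S_\mathrm{tot}$ rather than honest $T^*\mathbb{T}^n$, so I must first translate ``$C^\infty_c(S_\mathrm{tot})$'' into a concrete symbol estimate in the $(x,y,\xi)$ coordinates. Near the interior of $S_\mathrm{tot}$ (away from the front face) the blowdown map $\beta$ is a diffeomorphism, so $s$ pulls back to an ordinary compactly supported smooth amplitude there, and there is nothing to do. Near the front face one introduces polar-type coordinates: writing the $d$ defining functions $\mathbf{v}_j\cdot\xi$ (together with $h$) as $(r,\theta)$ with $r = |(\mathbf{v}_1\cdot\xi,\ldots,\mathbf{v}_d\cdot\xi,h)|$ and $\theta$ on the sphere, smoothness on $S_\mathrm{tot}$ of a compactly supported symbol means bounded derivatives in $(r,\theta)$ and in the remaining $\xi$-directions tangent to $\mathcal{C}$. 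Unwinding this, one gets that $s$, as a function of $(x,y,\xi;h)$, satisfies
\[
|\partial_x^\alpha \partial_y^{\alpha'} \partial_\xi^\beta\, s(x,y,\xi;h)| \le C_{\alpha\alpha'\beta}\qquad\text{uniformly in } h\in[0,1),
\]
i.e.\ $s$ is a bounded symbol in the Kohn--Nirenberg class $S^0$ of \eqref{symbol inequality} with order $0$, with all seminorms uniform in $h$. (The compact support in $\xi$ is what kills the potential blowup of $\partial_\xi$ near the front face: a smooth function on a compact manifold with corners has bounded derivatives, and the corner coordinates are smooth functions of $\xi$ on the support.)

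Once $s$ is recognized as a uniformly-bounded-in-$h$ amplitude of order $0$, the rest is standard. First I reduce the three-variable amplitude $a(x,y,\xi;h) = s(x,y,\xi;h)\chi(x,y)$ to a left (or right) symbol depending on two variables only: this is exactly \autoref{composition theorem} applied with $m=l=0$, giving $I_h(s) = {}^h\mathrm{Op_r}(b) + R$ with $b\in S^{0,0}(S_\mathrm{tot})$ and $R\in\Re^0$. The residual term $R$ is bounded on $L^2$ by hypothesis (it is a bounded linear operator by definition of $\Re^l$, taking $\beta=\gamma=0$ in Condition~\ref{involutizing condition}). So it remains to bound ${}^h\mathrm{Op_r}(b)$, and by the same unwinding as above $b$ is a bounded-in-$h$ symbol of Kohn--Nirenberg order $0$; the semiclassical Calder\'on--Vaillancourt theorem (see e.g.\ \cite{DS,Zw}) then gives $\|{}^h\mathrm{Op_r}(b)\|_{L^2\to L^2} \le C$ with $C$ depending only on finitely many of the $S^0$ seminorms of $b$, hence uniformly in $h$. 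Putting the two pieces together yields $L^2$-boundedness of $I_h(s)$, uniformly as $h\downarrow 0$, which is precisely the assertion $I_h(s)\in\Psi^{0,0}_{2,h}(\mathcal{C})$ together with uniform boundedness.

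The main obstacle is the first step: making precise that smoothness and compact support on the blown-up manifold with corners $S_\mathrm{tot}$ forces the plain derivative bounds of \eqref{symbol inequality} with order $0$. One has to be a little careful because near the corner (intersection of front and side faces) two boundary-defining functions are simultaneously small, and one must check that iterated $\partial_\xi$-derivatives of the pullback of a $C^\infty_c(S_\mathrm{tot})$ function do not generate negative powers of either defining function --- they cannot, since on a compact manifold with corners smoothness simply means bounded derivatives in any smooth coordinate system, and the corner/polar coordinates depend smoothly on $(\xi,h)$. I would spell this out in a short lemma (or by direct coordinate computation in the model case $\mathcal{C} = \{\xi_1=\cdots=\xi_d=0\}$, to which the general linear coisotropic is reduced by the linear change of variables diagonalizing the $\mathbf{v}_j$), and then everything else is a citation. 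A secondary, purely bookkeeping point is to confirm that the cutoff $\chi(x,y)$ near the diagonal does not spoil the amplitude estimates --- it does not, since $\chi\in C^\infty_c$ and multiplication by it only improves support properties --- and that the stationary-phase reduction in \autoref{composition theorem} preserves the uniform-in-$h$ character of the seminorms, which it does because all remainders there are gained in powers of $h$.
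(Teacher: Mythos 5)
There is a genuine gap, and it sits exactly at the step you identify as the main obstacle. Your claim that $s\in C^\infty_c(\mathbb{T}^n\times S_\mathrm{tot})$ satisfies the plain Kohn--Nirenberg bounds $|\partial_x^\alpha\partial_y^{\alpha'}\partial_\xi^\beta s|\le C_{\alpha\alpha'\beta}$ uniformly in $h$ is false, and the justification offered (``the corner coordinates are smooth functions of $\xi$ on the support'') is where it breaks: the corner coordinates $H=h/(\mathbf{v}_1\cdot\xi)$ and $\Xi_j=(\mathbf{v}_j\cdot\xi)/(\mathbf{v}_1\cdot\xi)$ are smooth in $\xi$ for each fixed $h>0$, but their $\xi$-derivatives are of size $(\mathbf{v}_1\cdot\xi)^{-1}$, and on a compact subset of $S_\mathrm{tot}$ touching the corner one has $\mathbf{v}_1\cdot\xi = h/H \gtrsim h$, so these derivatives grow like $h^{-1}$ as $h\downarrow 0$. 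Concretely, for $\mathcal{C}=\{\xi_1=\xi_2=0\}$ take $s=\varphi(x)\psi\bigl(h/\xi_1\bigr)\psi(\xi_1)\chi(\xi_2/\xi_1)$ with bump functions $\psi,\chi$: this lies in $C^\infty_c(S_\mathrm{tot})$, yet $\sup|\partial_{\xi_2}s|\sim h^{-1}$. This is not a technicality one can ``spell out in a short lemma''---if the plain estimates \eqref{symbol inequality} held uniformly, second microlocal symbols would be ordinary symbols and the blowup would carry no content. What is true, and what the paper's proof actually establishes, is the weaker estimate $h^{|\beta|}|\partial^\alpha_{x,y}\partial^\beta_\xi s|\le C_{\alpha\beta}$ (i.e.\ \eqref{estimate in C-V}): after the substitution $\eta=\xi/h$ this is exactly what the classical Calder\'on--Vaillancourt theorem requires, and it follows because $h\partial_{\xi_i}$ lifts, as in \eqref{hard lift}, to $\rho_\mathrm{sf}$ times a vector field tangent to both boundary faces of $S_\mathrm{tot}$, so applying it to a $C^\infty_c(S_\mathrm{tot})$ amplitude yields something bounded (in fact gaining a factor of $\rho_\mathrm{sf}$). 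That lift computation is the heart of the proposition and is missing from your argument; once it is in place, the remaining citation of Calder\'on--Vaillancourt is the same as the paper's.

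A secondary structural point: your detour through \autoref{composition theorem} to reduce to a right symbol is both unnecessary and, in the paper's logical order, circular---the Calder\'on--Vaillancourt lemma as used here accepts amplitudes $a(x,y,\xi;h)$ depending on both base variables, and the paper's proof of \autoref{composition theorem} itself invokes Proposition \ref{boundedness prop} to show the remainders are involutizing. (Your handling of the residual term via Condition~\ref{involutizing condition} with $\beta=\gamma=0$ is fine, but the reduction should not be used at this stage.)
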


This parallels the standard result that $h$-pseudodifferential operators in $\Psi^0_h(\mathbb{T}^n)$ (here, $0$ is the order in the $h$-filtration, not the differential order) are $L^2$ bounded.  The following is an essential ingredient of our proof of $L^2$ boundedness.

\begin{lemma}[Calder\'{o}n--Vaillancourt theorem]
Let $a(x,y,\xi;h)\in C_c^\infty(\mathbb{T}^n\times \mathbb{T}^n\times\mathbb{R}^n\times[0,1))$.  Suppose that for all multi-indices $\alpha,\beta$, there exists a constant $C_{\alpha\beta} > 0$ (independent of $h$) such that
\begin{equation} \label{C-V estimate}
|\partial^{\alpha}_{x,y} \partial^{\beta}_{\xi} a(x,y,\xi;h)|\leq C_{\alpha\beta}.
\end{equation}
Then
\[
I(a) = (2\pi)^{-n} \int e^{i(x-y)\cdot\xi} \chi(x,y) a(x,y,\xi;h)\ d\xi
\]
is bounded, as an operator on $L^2(\mathbb{T}^n)$.  (Note that this is a non-semiclassical quantization.)
\end{lemma}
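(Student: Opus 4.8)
The plan is to prove this as a version of the Calder\'{o}n--Vaillancourt theorem with the operator norm controlled by \emph{finitely many} of the constants $C_{\alpha\beta}$, \emph{uniformly in $h$ and, crucially, in the size of $\mathrm{supp}_\xi a$}. (The latter uniformity is the whole point: although for $a\in C_c^\infty$ the kernel of $I(a)$ is smooth and diagonally supported, hence trivially $L^2$-bounded for fixed $h$, we will later invoke this lemma after the semiclassical rescaling $\xi\mapsto h\xi$, which replaces $a(x,y,\xi;h)$ by $a(x,y,h\xi;h)$ --- inflating the $\xi$-support to radius $\sim h^{-1}$ while leaving every derivative bound uniform.) Since all constants below are to be $h$-independent, I suppress $h$ and write $a=a(x,y,\xi)$. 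Lift everything to the universal cover: $I(a)$ has Schwartz kernel $K(x,y)=(2\pi)^{-n}\chi(x,y)\int e^{i(x-y)\cdot\xi}a(x,y,\xi)\,d\xi$ on $\mathbb{R}^n\times\mathbb{R}^n$, supported in $|x-y|\le c<\tfrac12$ and $\mathbb{Z}^n$-diagonally periodic, so that its $L^2(\mathbb{T}^n)$ norm is dominated by its $L^2(\mathbb{R}^n)$ norm. Now fix a partition of unity $1=\sum_{k\in\mathbb{Z}^n}\psi(\xi-k)$ with $\psi\in C_c^\infty$ supported in a unit cube, set $a_k(x,y,\xi)=\psi(\xi-k)a(x,y,\xi)$ and $T_k:=I(a_k)$, so $I(a)=\sum_k T_k$, and apply the Cotlar--Stein almost-orthogonality lemma.

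First I would bound each $T_k$ uniformly. Its kernel $K_k$ has $|K_k(x,y)|\le(2\pi)^{-n}\|\chi\|_{C^0}\cdot\mathrm{vol}(\text{unit cube})\cdot\sup|a|\le C$, and one IBP via $e^{i(x-y)\cdot\xi}=\langle x-y\rangle^{-2N}(1-\Delta_\xi)^N e^{i(x-y)\cdot\xi}$ (no boundary terms, since $\psi(\cdot-k)$ is compactly supported) improves this to $C_N\langle x-y\rangle^{-2N}$ with $C_N$ depending only on $\|\chi\|_{C^0}$ and $\sup_{|\beta|\le 2N}\|\partial_\xi^\beta a\|_\infty$, uniformly in $k$; Schur's test then gives $\|T_k\|\le C$ uniformly in $k$. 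The naive triangle inequality is useless because $\sim h^{-n}$ of the $T_k$ are nonzero in the application, which is precisely why almost-orthogonality is needed.

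Next I would estimate $T_j^*T_k$ and, symmetrically, $T_jT_k^*$. The kernel of $T_j^*T_k$ at $(x,y)$ equals $\int \overline{K_j(z,x)}\,K_k(z,y)\,dz$, which, after inserting the two oscillatory integrals, becomes an integral over $(z,\xi,\eta)$ whose $z$-phase is $z\cdot(\eta-\xi)$ with $\xi\in\mathrm{supp}\,\psi(\cdot-j)$, $\eta\in\mathrm{supp}\,\psi(\cdot-k)$, hence $|\eta-\xi|\ge c|j-k|-C$. The cutoffs $\chi(z,x)\chi(z,y)$ confine $z$ to a bounded region (of fixed size), so the $z$-integral converges trivially; integrating by parts $N$ times in $z$ against $\langle\eta-\xi\rangle^{-2}(1-\Delta_z)$ --- where $\partial_z$ hits only $\chi(z,\cdot)$ and $a_j,a_k$, all with bounded derivatives --- produces a factor $\langle\eta-\xi\rangle^{-2N}\le C_N\langle j-k\rangle^{-2N}$ for $|j-k|$ large (and trivially $\|T_j^*T_k\|\le C$ otherwise). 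Integrating the remaining bounded amplitude over $z$ (bounded region) and over $\xi,\eta$ (unit cubes) yields $|(T_j^*T_k)(x,y)|\le C_N\langle j-k\rangle^{-2N}$, uniformly, hence $\|T_j^*T_k\|\le C_N\langle j-k\rangle^{-2N}$ by Schur; the same holds for $\|T_jT_k^*\|$. Choosing $N>n$ and applying the Cotlar--Stein lemma gives $\|I(a)\|=\|\sum_k T_k\|\le C_N\sum_{m\in\mathbb{Z}^n}\langle m\rangle^{-N}<\infty$, with the constant controlled by finitely many seminorms of $a$ and independent of $h$.

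The step I expect to be the main obstacle is the almost-orthogonality estimate, specifically the bookkeeping of the torus geometry and of the cutoff $\chi$ inside the $z$-integration by parts: the differences $x-y$, $z-x$, $z-y$ are not globally defined on $\mathbb{T}^n$, so one must either argue on the cover $\mathbb{R}^n$ as above --- using that $\chi$ (respectively $\chi(z,x)\chi(z,y)$) pins all arguments into a bounded set so that lifts are unambiguous and $\chi\equiv1$ near the diagonal keeps its derivatives harmless --- or argue on coordinate patches, and in either case verify carefully that no constant secretly depends on $h$ or on $\#\{k:a_k\not\equiv0\}$; the unit-cube decomposition together with the $\xi$-integration by parts is exactly the device that rules this out. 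A cleaner alternative packaging would be to recognize $I(a)$ as descending from a $\mathbb{Z}^n$-periodic Kohn--Nirenberg operator on $\mathbb{R}^n$ with diagonally supported kernel, note that periodization does not increase the $L^2$ operator norm, and then simply quote the Euclidean Calder\'{o}n--Vaillancourt theorem as in \cite{Zw} or \cite{Ma}.
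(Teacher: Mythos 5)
Your proof is correct, but it is a genuinely different route from the paper's: the paper disposes of this lemma by citation alone (``See \cite{CaVa}''), whereas you give a self-contained argument. Your argument is the standard Cotlar--Stein proof of Calder\'on--Vaillancourt, correctly adapted to the present setting: the unit-cube partition $1=\sum_k\psi(\xi-k)$, the uniform single-block bounds (here the diagonal cutoff $\chi$ already makes Schur's test work, so the $\langle x-y\rangle^{-2N}$ gain from $\xi$-integration by parts is not even needed), the $T_j^*T_k$ and $T_jT_k^*$ estimates by non-stationary phase in $z$ against the phase $z\cdot(\eta-\xi)$ with $|\eta-\xi|\gtrsim|j-k|$, and Cotlar--Stein with $N>n$. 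You also correctly identify the real content of the lemma, namely that the operator norm is controlled by finitely many constants $C_{\alpha\beta}$ uniformly in $h$ and in the size of the $\xi$-support --- exactly what is used in Proposition \ref{boundedness prop} after the rescaling $\eta=\xi/h$, where the support inflates to radius $\sim h^{-1}$. The one step you state without justification is that the $L^2(\mathbb{T}^n)$ operator norm is dominated by the $L^2(\mathbb{R}^n)$ norm of the lifted operator: this is true but deserves a line, e.g.\ since the lifted kernel is $\mathbb{Z}^n$-diagonally periodic and supported in $|x-y|\le c<\tfrac12$, for $u$ on the torus the value of $I(a)u$ on a fundamental domain $Q$ equals the Euclidean operator applied to the periodic extension cut off to $Q+B(0,c)$, which costs at most a fixed volume factor (or one can remove even that factor by an averaging argument with a slowly varying cutoff). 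Your closing alternative --- descend from a periodic Kohn--Nirenberg operator on $\mathbb{R}^n$ and quote the Euclidean theorem of \cite{CaVa} or \cite{Zw} --- is essentially the paper's implicit approach, and it needs the same periodization remark; what your longer argument buys is a self-contained proof on the torus with the $h$-uniformity visible at every step, while the citation buys brevity.
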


\begin{proof}
See \cite{CaVa}.
\end{proof}

\begin{proof}[Proof of Proposition \ref{boundedness prop}]
Let $s\in C^\infty_c(\mathbb{T}^n \times S_\mathrm{tot})$ and change variables $\eta = \xi/h$.  Then the estimate in \eqref{C-V estimate} becomes
\begin{equation} \label{estimate in C-V}
h^{|\beta|} \left|\left(\partial^{\alpha}_{x,y} \partial^{\beta}_{\xi} s\right)(x,y,\xi;h)\right| = |\partial^{\alpha}_{x,y} \partial^{\beta}_{\eta}[s(x,y,h\eta,h)]| \leq C_{\alpha\beta}.
\end{equation}

Using a partition of unity, we may decompose $s$ into pieces supported on $\mathbb{T}^n$ times the lift to $S_\mathrm{tot}$ of $\{\mathbf{v}_j\cdot\xi \neq 0\} \times [0,1)_h \subset T^* \mathbb{T}^n\times [0,1)_h$ for $j=1,\ldots,d$.  By symmetry, it is enough to study the part of $s$ supported on $\mathbb{T}^n$ times the lift of $\{\mathbf{v}_1\cdot\xi \neq 0\} \times [0,1)$.

We may extend the linearly independent set $\{\mathbf{v}_1,\ldots,\mathbf{v}_d\}$ to a basis
\[
\{\mathbf{v}_1,\ldots,\mathbf{v}_d,\mathbf{w}_{d+1},\ldots,\mathbf{w}_n\}
\]
for $\mathbb{R}^n$.  Locally, in a neighborhood of the corner of $S_\mathrm{tot}$, we employ the coordinates $x$, $y$, $H = h / {(\mathbf{v}_1\cdot\xi)}$, $\zeta = \mathbf{v}_1\cdot\xi$,
\[
\Xi = (\Xi_2,\ldots,\Xi_d) = \left(\frac{\mathbf{v}_2\cdot\xi}{\mathbf{v}_1\cdot\xi},\ldots,\frac{\mathbf{v}_d\cdot\xi}{\mathbf{v}_1\cdot\xi}\right),
\]
and $\mathbf{W} = (\mathbf{w}_{d+1}\cdot\xi,\ldots,\mathbf{w}_n\cdot\xi)$.

\emph{We lift $h^{|\beta|} \partial^{\alpha}_{x,y} \partial^{\beta}_{\xi}$ to the coordinates just introduced, then show that $s$ satisfies the estimate in \eqref{estimate in C-V} under application of the lifted vector field.}  In particular, we will lift $h \partial_{\xi_i}$ for $1\leq i\leq n$.  We have

\begin{equation*}
\partial_{\xi_i} = v_1^i \partial_\zeta - \frac{h v_1^i}{\zeta^2} \partial_H + \mathbf{w}^i\cdot\partial_{\mathbf{W}} + \frac{\partial \Xi}{\partial \xi_i}\cdot\partial_\Xi,
\end{equation*}
where $\mathbf{w}^i = \left(w_{d+1}^i,\ldots,w_n^i\right)$.  Let $\mathbf{v}^i = (v_2^i,\ldots,v_d^i)$.  We calculate that
\[
\frac{\partial \Xi}{\partial \xi_i}  = \left(\frac{v_2^i \zeta - v_1^i(\mathbf{v}_2\cdot\xi)}{\zeta^2},\ldots,\frac{v_d^i \zeta - v_1^i(\mathbf{v}_d\cdot\xi)}{\zeta^2}\right).
\]

Therefore,
\begin{equation} \label{hard lift}
h \partial_{\xi_i} = H \left(v_1^i \zeta \partial_\zeta - v_1^i H \partial_H + \mathbf{w}^i \cdot \zeta \partial_{\mathbf{W}} + \mathbf{v}^i \cdot \partial_\Xi - v_1^i \Xi\cdot\partial_\Xi\right) =: H \vec{V}.
\end{equation}

For $1\leq i\leq n$, $|h \partial_{\xi_i} s|$ is thus bounded (independently of $h$), because $s\in C^\infty_c(\mathbb{T}^n \times S_\mathrm{tot})$ is compactly supported in the variables $\zeta$, $H$, $\mathbf{W}$, and $\Xi$.

Therefore, since \eqref{estimate in C-V} is satisfied, we may apply the Calder\'{o}n--Vaillancourt theorem to conclude that $I_h(s):L^2(\mathbb{T}^n)\rightarrow L^2(\mathbb{T}^n)$.
\end{proof}

Let $\mathcal{V}_b$ represent the set of vector fields on $S_\mathrm{tot}$ tangent to both front and side faces.  Notice that $\vec{V}\in\mathcal{V}_b$.  Now we prove \autoref{composition theorem}.

\begin{proof}
We use the same coordinates as in the previous proof, and let $\rho_\mathrm{sf} = H = h / {(\mathbf{v}_1\cdot\xi)}$, $\rho_\mathrm{ff} = \zeta = \mathbf{v}_1\cdot\xi$.

Let $a(x,y,\xi;h)\in C^\infty(\mathbb{T}^n;S^{m,l}(S_\mathrm{tot}))$.  Taylor's formula yields the asymptotic sum
\[
a(x,y,\xi;h)\sim\sum_\alpha{\frac{(x-y)^\alpha}{\alpha!}(\partial^{\alpha}_{x}a)(y,y,\xi;h)}.
\]

We see from \eqref{hard lift} that for all $1\leq j\leq n$, application of $h \partial_{\xi_j}$ to a second microlocal total symbol improves its decay at the side face.  Therefore, for any multi-index $\alpha$,
\[
(h\partial_\xi)^\alpha(\partial^{\alpha}_{x}a)(y,y,\xi;h)\in H^{-m+|\alpha|} \zeta^{-l} C^{\infty}_{c}(S_\mathrm{tot}),
\]
so there exists $b\in H^{-m} \zeta^{-l} C^{\infty}_{c}(S_\mathrm{tot})$ for which
\[
b\sim\sum_\alpha \frac{i^{|\alpha|}}{\alpha!} (h\partial_\xi)^\alpha(\partial^\alpha_x a)(y,y,\xi;h).
\]

Fix any $N$.  Then 
\[
a - \sum_{|\alpha|<N} \frac{(x-y)^\alpha}{\alpha!}(\partial^{\alpha}_{x}a)(y,y,\xi;h) =: a_N\in C^\infty(\mathbb{T}^n;S^{m-N,l}(S_\mathrm{tot})),
\]
and
\[
b - \sum_{|\alpha|<N} \frac{i^{|\alpha|}}{\alpha!} (h\partial_\xi)^\alpha(\partial^\alpha_x a)(y,y,\xi;h) =: b_N\in S^{m-N,l}(S_\mathrm{tot}).
\]

We have
\begin{align*}
	I_h(a) &= \sum_{|\alpha|<N} \frac{1}{\alpha!} \int e^{\frac{i}{h}(x-y)\cdot\xi} \chi(x,y) (x-y)^\alpha (\partial^{\alpha}_{x}a)(y,y,\xi;h) \dbar\xi + I_h(a_N) \\
			&= \sum_{|\alpha|<N} \frac{1}{i^{|\alpha|}\alpha!} \int (h\partial_\xi)^\alpha e^{\frac{i}{h}(x-y)\cdot\xi} \chi(x,y) (\partial^{\alpha}_{x}a)(y,y,\xi;h) \dbar\xi + I_h(a_N) \\
			&= \sum_{|\alpha|<N} \frac{i^{|\alpha|}}{\alpha!} \int e^{\frac{i}{h}(x-y)\cdot\xi} \chi(x,y) (h\partial_\xi)^\alpha(\partial^{\alpha}_{x}a)(y,y,\xi;h) \dbar\xi +I_h(a_N) \\
			&= {}^h \mathrm{Op_r}(b - b_N) + I_h(a_N).
\end{align*}

Let $R_N := I_h(a_N) - {}^h \mathrm{Op_r}(b_N)\in \Psi^{m-N,l}_{2,h}(\mathcal{C})$.  We will show that the operator $I_h(a_N)$ is residual as defined above, `up to order $N$' (see below).  The proof for ${}^h \mathrm{Op_r}(b_N)$ is the same, except $b_N$ has one fewer component.


\textit{Proof of} Condition \ref{involutizing condition}: We are interested in $h^{-|\beta|} \mathbf{B}^\beta I_h(a_N)$.  By the Leibniz rule, this equals
\[
\sum_{\mu+\nu=\beta} \frac{\beta!}{\mu!\nu!} \int \left(\frac{1}{H}\right)^{\mu_1}\left(\frac{\Xi_2}{H}\right)^{\mu_2}\cdots\left(\frac{\Xi_d}{H}\right)^{\mu_d} e^{\frac{i}{h}(x-y)\cdot\xi}\left[\widetilde{\mathbf{B}}^\nu a_N\right] \dbar\xi.
\]
Recall that $H^{-1} = (\mathbf{v}_1\cdot\xi) / h$ is the reciprocal of the boundary defining function for the side face.  Hence, as long as $|\beta| \leq -m + N + l$, the net contribution is a positive power of $\rho_{\mathrm{sf}}$.
$I_h(a_N) \widetilde{\mathbf{B}}^\gamma$ is handled similarly.  Note that the amplitude $a_N$ is smooth in $x$ and $y$, so application of $\widetilde{\mathbf{B}}$ does not change the symbol class to which $a_N$ belongs.  By Proposition \ref{boundedness prop}, we conclude that $I_h(a_N)$ is involutizing for $|\beta| + m - l \leq N$.


Similarly, $I_h(a-a_N),{}^h \mathrm{Op_r}(b-b_N)\in\Psi^{m-N,l}_{2,h}(\mathcal{C})$ are finitely residual in this sense.  Then, since
\[
I_h(a) - {}^h \mathrm{Op_r}(b) = I_h(a-a_N) - {}^h \mathrm{Op_r}(b-b_N) + I_h(a_N) - {}^h \mathrm{Op_r}(b_N),
\]
we may conclude that $I_h(a) - {}^h \mathrm{Op_r}(b)$ is residual.
\color{black}
\end{proof}

We can likewise prove a \emph{left} reduction or Weyl reduction.  The point is that we can convert one quantization map into any other, at the (low) cost of one of these residual operators.




\begin{corollary}[Composition Law] \label{composition law}
If $a\in S^{m,l}(S_\mathrm{tot})$, $b\in S^{m',l'}(S_\mathrm{tot})$, then ${}^h \mathrm{Op_l}(a)\hspace{.1cm}\circ\hspace{.1cm}{}^h \mathrm{Op_l}(b) = {}^h \mathrm{Op_l}(c) + R$ for $c\in S^{m+m',l+l'}(S_\mathrm{tot})$ and $R\in\Re^{l+l'}$.
\end{corollary}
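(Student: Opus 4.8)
The plan is to reduce the composition of two left quantizations to a single left quantization via the right-reduction theorem (\autoref{composition theorem}), exactly as in the classical pseudodifferential calculus. First I would write down the Schwartz kernel of ${}^h\mathrm{Op_l}(a)\circ{}^h\mathrm{Op_l}(b)$. Writing $P = {}^h\mathrm{Op_l}(a)$ and $Q = {}^h\mathrm{Op_l}(b)$, the kernel of $PQ$ is obtained by integrating out the intermediate variable: $K_{PQ}(x,y) = \int K_P(x,z)K_Q(z,y)\,dz$. After substituting the oscillatory-integral formulas for $K_P$ and $K_Q$ and performing the $z$-integration against the exponential factors, one obtains an expression of the form $I_h(\tilde a)$ for an amplitude $\tilde a(x,y,\xi;h)$ that depends on both $x$ and $y$; concretely $\tilde a$ is built from $a(x,\cdot)$, $b(\cdot,y)$, and the cutoffs $\chi$. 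The one point requiring care here is the role of the cutoffs $\chi$ near the diagonal: their product and the $z$-integration only make literal sense where $|x-z|$ and $|z-y|$ are small, but by the stationary-phase remark already in the text the choice of $\chi$ affects the result only up to $O(h^\infty)$, and such errors are residual (indeed lie in $\Re^{l'}$ for every $l'$, being smoothing), so I would absorb them at the end.

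Next I would verify that $\tilde a$ lies in the right symbol class, namely $\tilde a\in C^\infty(\mathbb{T}^n; S^{m+m',l+l'}(S_\mathrm{tot}))$ — that is, with the base variable now being $x$ and the ``$y$'' slot being the extra smooth variable, or vice versa. This is where I expect to lean on the structure established in the proof of \autoref{boundedness prop}: the key fact is that $h\partial_{\xi_j}$ lifts to $H\vec V$ with $\vec V\in\mathcal V_b$, so each factor of $\xi$-differentiation gains a power of $\rho_{\mathrm{sf}} = H$ while preserving compact support and tangency to both faces. Since $a$ and $b$ separately satisfy the required $\rho_{\mathrm{sf}}^{-m}\rho_{\mathrm{ff}}^{-l}$ and $\rho_{\mathrm{sf}}^{-m'}\rho_{\mathrm{ff}}^{-l'}$ bounds on $S_\mathrm{tot}$, and the orders add under the pointwise product, the composed amplitude satisfies the $\rho_{\mathrm{sf}}^{-(m+m')}\rho_{\mathrm{ff}}^{-(l+l')}$ bounds — here I must check that the $z$-integration and the $\chi$-factors do not destroy the blow-up regularity, which follows because $\chi$ is smooth and compactly supported near the diagonal and the front/side-face structure only involves the $\xi$-variables, untouched by the $z$-integration.

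Finally, I would apply \autoref{composition theorem} to $I_h(\tilde a)$: there exists $c\in S^{m+m',l+l'}(S_\mathrm{tot})$ with $I_h(\tilde a) = {}^h\mathrm{Op_r}(c) + R_0$ and $R_0\in\Re^{l+l'}$. But the statement asks for a \emph{left} quantization, so I would then invoke the left-reduction (the analogue of \autoref{composition theorem} with ${}^h\mathrm{Op_l}$ in place of ${}^h\mathrm{Op_r}$, which the text explicitly notes holds by the same argument) to convert ${}^h\mathrm{Op_r}(c)$ into ${}^h\mathrm{Op_l}(c') + R_1$ with $c'\in S^{m+m',l+l'}(S_\mathrm{tot})$ and $R_1\in\Re^{l+l'}$. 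Setting $R = R_0 + R_1$ plus the $O(h^\infty)$ cutoff error, and using that $\Re^{l+l'}$ is closed under addition (and contains the smoothing/$O(h^\infty)$ operators), gives the result with $c'$ renamed $c$.

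The main obstacle, as I see it, is not any single deep step but the bookkeeping of \emph{two} simultaneous filtrations through the $z$-integration: one must confirm that forming the composed kernel does not spoil the conormal structure at the front face $\{\rho_{\mathrm{ff}} = 0\}$ nor the semiclassical blow-up structure at the side face $\{\rho_{\mathrm{sf}} = 0\}$, and that the orders genuinely add rather than merely staying bounded. The cleanest way around this is to never manipulate the composed kernel in the singular coordinates at all: work with $I_h(\tilde a)$ as a bona fide object covered by \autoref{composition theorem}, so that all the delicate blow-up analysis is quarantined inside that already-proved reduction theorem, and the present corollary becomes a two-line consequence of it plus its left-sided twin.
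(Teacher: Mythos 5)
There is a genuine gap at the very first step, and it is exactly the point your ``quarantine'' strategy cannot handle. For a composition of \emph{two left quantizations}, the symbol $b$ of the second factor depends on the intermediate variable $z$, so ``performing the $z$-integration against the exponential factors'' does \emph{not} collapse the two frequency variables: the composed kernel is a double-frequency oscillatory integral
\[
K(x,y)=(2\pi h)^{-2n}\int\!\!\int\!\!\int e^{\frac{i}{h}\left[(x-z)\cdot\xi+(z-y)\cdot\eta\right]}\chi(x,z)\chi(z,y)\,a(x,\xi;h)\,b(z,\eta;h)\,dz\,d\eta\,d\xi,
\]
and the only way to exhibit it as $I_h(\tilde a)$ is to define $\tilde a(x,y,\eta;h)$ itself by an oscillatory integral in $(z,\xi)$. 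Showing that this $\tilde a$ lies in $C^\infty(\mathbb{T}^n;S^{m+m',l+l'}(S_\mathrm{tot}))$ is not a matter of ``orders adding under the pointwise product''; it requires a semiclassical stationary-phase expansion whose terms involve $(h\partial_\eta)^k b$, and the fact that these stay in the symbol class (a gain at the side face, via \eqref{hard lift}) together with the residual nature of the off-diagonal $\chi$-derivative terms is precisely the two-filtration bookkeeping you hoped to avoid. It cannot be quarantined inside \autoref{composition theorem}, because that theorem takes as \emph{input} an amplitude already known to lie in $C^\infty(\mathbb{T}^n;S^{m,l}(S_\mathrm{tot}))$ with a single frequency variable; producing such an amplitude from the left--left composition is the step your proposal skips.

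The paper's route closes this gap by reordering your steps: first use the reduction theorem to rewrite the \emph{second} factor as a right quantization, ${}^h\mathrm{Op_l}(b)={}^h\mathrm{Op_r}(\tilde b)+R'$ (the term ${}^h\mathrm{Op_l}(a)R'$ being absorbed by the ideal property of $\Re$), so that in ${}^h\mathrm{Op_l}(a)\circ{}^h\mathrm{Op_r}(\tilde b)$ the amplitude $a(x,\xi)\tilde b(y,\eta)$ is independent of the intermediate variable; then stationary phase in $(w,\eta)$ produces terms of the form $a(x,\xi)\,(h\partial_\eta)^k\tilde b(y,\xi)$, which lie in the product class by \eqref{hard lift}, with the $\chi$-derivative contributions residual; finally one reduction converts the resulting $I_h$-amplitude to a single (left) quantization. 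Your final left-reduction step and your treatment of the cutoffs are fine, but as written the middle of your argument asserts, rather than proves, the one nontrivial fact the corollary rests on.
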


\begin{proof} By \autoref{composition theorem}, it is sufficient to consider ${}^h \mathrm{Op_l}(a)\circ {}^h \mathrm{Op_r}(b)$.  We have
\begin{align*}
& {}^h \mathrm{Op_l}(a) {}^h \mathrm{Op_r}(b) = \\
&= (2\pi h)^{-2n}\int{\int{\int{e^{\frac{i}{h}(x-w)\cdot\xi}e^{\frac{i}{h}(w-y)\cdot\eta} \chi(x,w)\chi(w,y) a(x,\xi;h)b(y,\eta;h)\hspace{.05cm}dw}\hspace{.05cm}d\eta}\hspace{.05cm}d\xi}.
\end{align*}
For this integral, we use the stationary phase theorem \cite[Theorem~3.17]{Zw}, and we concisely write the amplitude as $s(w,\eta;h)$.  By this theorem, for all $N$
\begin{align}
{}^h \mathrm{Op_l}(a) {}^h \mathrm{Op_r}(b) &= (2\pi h)^{-2n}\int{\int{\int{e^{\frac{i}{h}((x-y)\cdot\xi + (w-y)\cdot(\eta-\xi))} s(w,\eta;h)\hspace{.05cm}dw}\hspace{.05cm}d\eta}\hspace{.05cm}d\xi} \nonumber \\
&= (2\pi h)^{-n} \left(\int e^{\frac{i}{h}(x-y)\cdot\xi} \sum^{N-1}_{k=0} \frac{1}{i^k k!} \left[(D_w\cdot h D_\eta)^k s(w,\eta;h)\right]_{w=y,\eta=\xi}\ d\xi + O(h^N)\right) \label{stationary phase}.
\end{align}
Note that $(\partial_{w_j} \chi)(y,y) = 0$ and likewise for higher order derivatives of $\chi$, since these are supported off the diagonal.  Thus, writing out the first few terms of the sum,
\begin{align*}
& \sum^{N-1}_{k=0} \frac{1}{i^k k!} \left[(D_w\cdot h D_\eta)^k s(w,\eta;h)\right]_{w=y,\eta=\xi} = \\
&= \chi(x,y)a(x,\xi;h)b(y,\xi;h) + \sum^n_{j=1} (\partial_{w_j} \chi)(x,y)a(x,\xi;h)(h \partial_{\eta_j} b)(y,\xi;h) + O(h^2).
\end{align*}
We know $(\partial_{w_j} \chi)(x,y)$, and all higher derivatives evaluated at $w=y$, is supported off-diagonal.  So the contributions of these terms are residual (i.e., the kernel of a second microlocal operator is a coisotropic distribution, associated to $\mathcal{C}\times\mathcal{C}$, away from the diagonal).  Note that for all $j$, $(h \partial_{\eta_j} b)(y,\eta;h)$ is a second microlocal symbol, due to \eqref{hard lift}.

By \autoref{composition theorem}, we can then write \eqref{stationary phase} as the left quantization of some $c\in S^{m+m',l+l'}(S_\mathrm{tot})$, modulo a residual remainder.
\end{proof}

From now on, we will not explicitly write $\chi$.  By definition, the adjoint of a residual operator in $\Re^l$ is again an element of $\Re^l$.  In addition, it is routine to prove:

\begin{prop}
Let $A = {}^h \mathrm{Op_l}(a)$ for $a\in S^{m,l}(S_\mathrm{tot})$.  Then $A^* \in\Psi^{m,l}_{2,h}(\mathcal{C})$.
\end{prop}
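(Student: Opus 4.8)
The plan is to pass to the Schwartz kernel, recognize $A^*$ as a \emph{right} quantization of the complex-conjugate symbol, and then invoke the left-handed analogue of \autoref{composition theorem} (the ``left reduction'' promised immediately after its proof) to rewrite it as a left quantization modulo a residual operator. Taking the cutoff $\chi$ symmetric (permissible, since by the stationary-phase remark different choices of $\chi$ differ by $O(h^\infty)$, hence by an element of $\Re^l$), the kernel of $A = {}^h\mathrm{Op_l}(a)$ is $\int e^{\frac{i}{h}(x-y)\cdot\xi}\chi(x,y)a(x,\xi;h)\,\dbar\xi$, so the kernel of $A^*$ is $\overline{K_A(y,x)} = \int e^{\frac{i}{h}(x-y)\cdot\xi}\chi(x,y)\overline{a(y,\xi;h)}\,\dbar\xi$. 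That is, $A^* = {}^h\mathrm{Op_r}(\bar a)$.

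First I would check that $\bar a\in S^{m,l}(S_\mathrm{tot})$. This is immediate from the definition $S^{m,l}(S_\mathrm{tot}) = \rho_{\mathrm{sf}}^{-m}\rho_{\mathrm{ff}}^{-l}C^\infty_c(S_\mathrm{tot})$: the boundary defining functions $\rho_{\mathrm{sf}},\rho_{\mathrm{ff}}$ are real valued, and $\overline{C^\infty_c(S_\mathrm{tot})} = C^\infty_c(S_\mathrm{tot})$, so complex conjugation preserves the weighted symbol class. Then I would apply the left reduction to ${}^h\mathrm{Op_r}(\bar a)$: there exist $b\in S^{m,l}(S_\mathrm{tot})$ and $R'\in\Re^l$ with ${}^h\mathrm{Op_r}(\bar a) = {}^h\mathrm{Op_l}(b) + R'$. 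Combining, $A^* = {}^h\mathrm{Op_l}(b) + R$ with $R\in\Re^l$, which is exactly the statement $A^*\in\Psi^{m,l}_{2,h}(\mathcal{C})$.

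I do not expect a genuine obstacle here; the content is entirely the reduction theorem already in hand, and the only care needed is bookkeeping of filtration orders. The two points that are not purely formal are (i) conjugation preserving the second microlocal symbol classes, which as noted reduces to the weights being real, and (ii) the harmlessness of the cutoff asymmetry $\chi(x,y)$ versus $\chi(y,x)$, which follows from the $\chi$-independence remark (their difference vanishes near the diagonal, so off the diagonal non-stationary phase in $\xi$ yields an $O(h^\infty)$ kernel, in particular an involutizing operator). The front-face order $l$ is untouched throughout — neither conjugation nor the reduction step alters behavior at $\rho_{\mathrm{ff}}$ — so the residual remainder lands in $\Re^l$ as claimed.
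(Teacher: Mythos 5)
Your argument is correct and is precisely the ``routine'' proof the paper has in mind (the paper omits it): the adjoint of a left quantization is the right quantization of the conjugated symbol, conjugation preserves $S^{m,l}(S_\mathrm{tot})$ since the boundary weights are real, and the left reduction announced after \autoref{composition theorem} converts ${}^h\mathrm{Op_r}(\bar a)$ into ${}^h\mathrm{Op_l}(b)+R$ with $R\in\Re^l$. Your handling of the cutoff asymmetry also matches the paper's conventions, which treat off-diagonal contributions as residual and suppress $\chi$ from this point on.
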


Moreover, it is reassuring that quantizations of elements of $S^{-\infty,l}(S_\mathrm{tot})$, $l\in\mathbb{R}$, are residual.

\begin{prop} \label{quants are residual}
Suppose $\mathcal{C}$ is a linear coisotropic.  If $A = {}^h \mathrm{Op_r}(a)$ for $a\in S^{-\infty,l}(S_\mathrm{tot})$, then $A\in\Re^l$.
\end{prop}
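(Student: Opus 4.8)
The plan is to verify Condition~\ref{involutizing condition} for $A={}^h\mathrm{Op_r}(a)$ head-on, reducing the boundedness it requires to Proposition~\ref{boundedness prop}. First I would rescale the generators: since $B_j=\mathbf{v}_j\cdot hD_x=h\widetilde{B}_j$, we have $\mathbf{B}^\beta=h^{|\beta|}\widetilde{\mathbf{B}}^\beta$, so
\[
h^{-|\beta+\gamma|+l}\,\mathbf{B}^\beta A\,\mathbf{B}^\gamma=h^{l}\,\widetilde{\mathbf{B}}^\beta A\,\widetilde{\mathbf{B}}^\gamma,
\]
and it is enough to show the right-hand side is $L^2(\mathbb{T}^n)$-bounded for all $\beta,\gamma$. (The $h$-powers in Condition~\ref{involutizing condition} are chosen precisely so that, after passing to the $\widetilde{B}_j$, only the fixed weight $h^l$ survives, regardless of $|\beta+\gamma|$.)

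Next I would compute $\widetilde{\mathbf{B}}^\beta\,{}^h\mathrm{Op_r}(a)\,\widetilde{\mathbf{B}}^\gamma$ by integration by parts in the oscillatory integral defining ${}^h\mathrm{Op_r}(a)$. From $\mathbf{v}_j\cdot D_x\,e^{\frac{i}{h}(x-y)\cdot\xi}=\tfrac{\mathbf{v}_j\cdot\xi}{h}\,e^{\frac{i}{h}(x-y)\cdot\xi}=-\,\mathbf{v}_j\cdot D_y\,e^{\frac{i}{h}(x-y)\cdot\xi}$, each composition with a generator on the left replaces the amplitude by $\tfrac{\mathbf{v}_j\cdot\xi}{h}$ times it, and each composition on the right replaces it by a combination of $\pm\tfrac{\mathbf{v}_j\cdot\xi}{h}$ times it and $\mathbf{v}_j\cdot D_y$ of it; in both cases there are also terms in which the derivative lands on the diagonal cutoff $\chi$. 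Those cutoff terms have smooth Schwartz kernels supported off the diagonal, and since symbols in $S^{m,l}(S_\mathrm{tot})$ have $\xi$-compact support, repeated integration by parts in $\xi$ shows they are $O(h^\infty)$ in every seminorm; in particular, even multiplied by $h^l$ they are residual and $L^2$-bounded. For the main terms the decisive point, which is essentially \eqref{hard lift} read backwards, is that $\tfrac{\mathbf{v}_j\cdot\xi}{h}=\rho_\mathrm{sf}^{-1}g_j$ for some $g_j\in C^\infty(S_\mathrm{tot})$ (e.g.\ $g_j=\mathbf{v}_j\cdot\xi/(\mathbf{v}_1\cdot\xi)$ in the corner chart of Proposition~\ref{boundedness prop}, and analogously in the other charts of the partition of unity used there). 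Hence multiplication by $\tfrac{\mathbf{v}_j\cdot\xi}{h}$ sends $S^{m,l}(S_\mathrm{tot})$ into $S^{m+1,l}(S_\mathrm{tot})$, while $\mathbf{v}_j\cdot D_y$ preserves symbol class; since $S^{-\infty,l}=\bigcap_m S^{m,l}$ is stable under finitely many such operations, I would obtain
\[
\widetilde{\mathbf{B}}^\beta\,{}^h\mathrm{Op_r}(a)\,\widetilde{\mathbf{B}}^\gamma={}^h\mathrm{Op_r}(b)+R,\qquad b\in S^{-\infty,l}(S_\mathrm{tot}),\ \ R\ \text{residual}.
\]

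Finally I would multiply by $h^l$ and invoke Proposition~\ref{boundedness prop}. Because $h$ vanishes simply at both the side and front faces, $h=\rho_\mathrm{sf}\rho_\mathrm{ff}\cdot(\text{smooth, positive})$ — indeed $h=\rho_\mathrm{sf}\rho_\mathrm{ff}$ exactly in the coordinates used in the proof of \autoref{composition theorem}. Taking $m=l$ in the intersection defining $S^{-\infty,l}$ gives $b\in\rho_\mathrm{sf}^{-l}\rho_\mathrm{ff}^{-l}C_c^\infty(S_\mathrm{tot})$, hence $h^l b\in C_c^\infty(S_\mathrm{tot})=S^{0,0}(S_\mathrm{tot})$. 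Thus $h^l\,{}^h\mathrm{Op_r}(b)={}^h\mathrm{Op_r}(h^l b)$ is the quantization of a compactly supported smooth symbol and is $L^2$-bounded by Proposition~\ref{boundedness prop}, while $h^l R$ is still $O(h^\infty)$-smoothing and so bounded. Therefore $h^{-|\beta+\gamma|+l}\mathbf{B}^\beta A\,\mathbf{B}^\gamma$ is $L^2(\mathbb{T}^n)$-bounded for every $\beta,\gamma$; that is, $A$ is involutizing of weight $l$, so $A\in\Re^l$.

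I expect the main obstacle to be the bookkeeping in the middle step: checking uniformly, across the coordinate patches of the partition of unity, that each generator costs exactly one factor of $\rho_\mathrm{sf}^{-1}$, and confirming that the cutoff-derivative remainders are genuinely $O(h^\infty)$-smoothing (which uses the $\xi$-compact support of second microlocal symbols). Once that is settled, the matching of $h$-powers ($B_j=h\widetilde{B}_j$ against $h=\rho_\mathrm{sf}\rho_\mathrm{ff}$) forces the final symbol back into $S^{0,0}$ no matter how large $|\beta+\gamma|$ is, and Proposition~\ref{boundedness prop} finishes the proof.
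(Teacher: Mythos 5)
Your proof is, in its main thrust, the same as the paper's: reduce Condition~\ref{involutizing condition} to the $L^2$-boundedness of $h^l\,\widetilde{\mathbf{B}}^\beta A\,\widetilde{\mathbf{B}}^\gamma$, observe that each generator only multiplies the amplitude by $(\mathbf{v}_j\cdot\xi)/h=\rho_\mathrm{sf}^{-1}\cdot(\text{smooth})$ (or differentiates it in $y$, which is harmless), use the infinite-order vanishing of $a$ at the side face together with $h=\rho_\mathrm{sf}\rho_\mathrm{ff}$ to put the weighted amplitude back into $C^\infty_c$, and finish with Proposition~\ref{boundedness prop}. The paper writes out only the left composition $\widetilde{\mathbf{B}}^\beta A$ and suppresses $\chi$ entirely; your two-sided bookkeeping is the correct completion of that sketch.

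The one step whose justification is wrong is the claim that the terms where a derivative lands on the cutoff $\chi$ are $O(h^\infty)$ ``by repeated integration by parts in $\xi$.'' For second microlocal symbols this mechanism fails: each integration by parts off the diagonal gains a factor of $h\partial_\xi$ applied to the amplitude, and by \eqref{hard lift} this produces a gain of $\rho_\mathrm{sf}$, not of $h$; the gain degenerates at the front face, where the symbol varies on the scale $\widetilde{\xi}'\sim h$. Concretely, for $a(y,\xi;h)=\phi\bigl((\mathbf{v}_1\cdot\xi)/h\bigr)\psi(\xi)$ with $\phi,\psi\in C^\infty_c$, which lies in $S^{-\infty,0}(S_\mathrm{tot})$, the substitution $\eta=\xi/h$ shows that the Schwartz kernel of ${}^h \mathrm{Op_r}(a)$ converges off the diagonal to a nonzero $h$-independent function of $x-y$; so the $\partial\chi$ contributions are not $h^\infty$-smoothing. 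They are nevertheless harmless, for the reason your main argument already supplies: with $\partial\chi$ in place of $\chi$ the amplitude is still $O\bigl(\rho_\mathrm{sf}^{\infty}\rho_\mathrm{ff}^{-l}\bigr)$, further compositions with the $\widetilde{B}_j$ again cost only factors of $\rho_\mathrm{sf}^{-1}$ or $y$-derivatives, and the same $h^l$-weighting plus Proposition~\ref{boundedness prop} verifies Condition~\ref{involutizing condition} for these terms as well. (Compare the proof of Corollary~\ref{composition law}, where such off-diagonal contributions are called residual precisely because their kernels are coisotropic away from the diagonal, not because they are $O(h^\infty)$.) With that repair, your argument is complete and coincides with the paper's.
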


\begin{proof} Locally in $S_\mathrm{tot}$, we may take $\mathbf{v}_1\cdot\xi$ as defining function for the front face and $h / {(\mathbf{v}_1\cdot\xi)}$ for side face defining function.

We show that $A$ satisfies Condition \ref{involutizing condition}.  Recall that $\widetilde{B}_j = \mathbf{v}_j \cdot D_x$ and $\widetilde{\mathbf{B}}^\beta = \widetilde{B}_1^{\beta_1}\circ\cdots\circ\widetilde{B}_d^{\beta_d}$; also, let $\mathbf{V} = (\mathbf{v}_1\cdot\xi,\ldots,\mathbf{v}_d\cdot\xi)$.  We have
\begin{equation*}
\widetilde{\mathbf{B}}^\beta A = \int \left(\frac{\mathbf{V}}{h}\right)^\beta e^{\frac{i}{h}(x-y)\cdot\xi} a(y,\xi;h) \ \dbar\xi = \int \rho_\mathrm{sf}^{-|\beta|} \Xi^{(\beta_2,\ldots,\beta_d)} e^{\frac{i}{h}(x-y)\cdot\xi} a(y,\xi;h) \ \dbar\xi.
\end{equation*}
No matter how large $\beta$ is, $\rho_\mathrm{sf}^{-|\beta|} a\in S^{0,l}(S_\mathrm{tot})$.  Therefore, $\widetilde{\mathbf{B}}^\beta A$ maps $L^2(\mathbb{T}^n)$ to $h^{-l} L^2(\mathbb{T}^n)$.
\end{proof}

Finally, we show:

\begin{prop} \label{two-sided ideal prop}
The residual algebra is an ideal in $\Psi_{2,h}(\mathcal{C})$.
\end{prop}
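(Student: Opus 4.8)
The plan is to show that if $A \in \Psi_{2,h}(\mathcal{C})$ and $R \in \Re$, then both $AR$ and $RA$ lie in $\Re$. Since $\Psi_{2,h}(\mathcal{C}) = \Re + \bigcup_{m,l} {}^h\mathrm{Op_l}(S^{m,l}(S_\mathrm{tot}))$ and $\Re$ is already closed under sums and under the operations that take $\Re^l$ into $\Re^{l'}$, it suffices (modulo checking that $\Re$ itself is a subalgebra, which follows readily from Condition \ref{involutizing condition} by inserting $\mathbf{B}^\beta$-factors between the two residual operators) to treat the case $A = {}^h\mathrm{Op_l}(a)$ with $a \in S^{m,l}(S_\mathrm{tot})$ and $R \in \Re^{l'}$. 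I will verify Condition \ref{involutizing condition} for $AR$ and $RA$ directly, using that $\{B_1,\ldots,B_d\}$ generate $\mathfrak{M}_\mathcal{C}$ together with the composition/reduction machinery already established.

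First I would handle $RA$. We must bound $h^{-|\beta+\gamma|+l''}\,\mathbf{B}^\beta (RA)\,\mathbf{B}^\gamma u_h$ in $L^2$ for suitable $l''$ (one expects $l'' = l + l'$, with an accounting of the shift $m$). Write $\mathbf{B}^\beta R A \mathbf{B}^\gamma = \mathbf{B}^\beta R \cdot (A \mathbf{B}^\gamma)$. By the composition law, Corollary \ref{composition law} (combined with the fact that each $B_j = \mathbf{v}_j \cdot hD_x$ is a quantization of a polynomial symbol, which sits in $S^{0,0}$ after the relevant localization — or more simply by using the reduction theorem to move the $B_j$'s through), $A\mathbf{B}^\gamma$ is again a second microlocal operator ${}^h\mathrm{Op_l}(a_\gamma) + (\text{residual})$ with $a_\gamma \in S^{m,l}(S_\mathrm{tot})$, since right-multiplication by $hD_{x}$ corresponds on the symbol side to $hD_x + (\text{lower order in } \rho_\mathrm{sf})$ — and crucially, by the computation in \eqref{hard lift}, applying $h\partial_\xi$ (or multiplying by $\xi/h$-type factors that arise) stays within the symbol classes. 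Then $\mathbf{B}^\beta R (A\mathbf{B}^\gamma) = (\mathbf{B}^\beta R) \cdot (A\mathbf{B}^\gamma)$: the factor $A\mathbf{B}^\gamma$ is $L^2$-bounded after extracting the appropriate power of $h$ (by Proposition \ref{boundedness prop}, since $a_\gamma \in S^{m,l}$, the operator $h^{m} a_\gamma$-type normalization is bounded up to $h^{-l}$), and $\mathbf{B}^\beta R$ maps into $h^{-|\beta|+l'}L^2$ by Condition \ref{involutizing condition} applied to $R$ with the right slot trivial. Composing the two estimates gives the required mapping property, so $RA \in \Re$.

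Next, for $AR$: write $\mathbf{B}^\beta (AR) \mathbf{B}^\gamma = (\mathbf{B}^\beta A) R \mathbf{B}^\gamma$. Here $\mathbf{B}^\beta A$ is, by the same reduction argument, a second microlocal operator whose symbol lies in $S^{m,l}$ (left multiplication by $hD_x$ adds a factor that, on the blown-up space, is still a legitimate symbol after the front-face rescaling since $\mathbf{v}_j \cdot hD_x$ quantizes $\mathbf{v}_j \cdot \xi = \rho_\mathrm{ff}\cdot(\text{smooth})$ in the relevant chart — in fact this \emph{improves} decay at the front face, which only helps). Thus $h^{-|\beta|}\mathbf{B}^\beta A$ is a bounded operator on $L^2$ up to $h^{-l}$ by Proposition \ref{boundedness prop}, and $R\mathbf{B}^\gamma$ maps $L^2 \to h^{-|\gamma|+l'}L^2$ by Condition \ref{involutizing condition} for $R$ with the left slot trivial. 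Composing gives $h^{-|\beta+\gamma|+l+l'}\mathbf{B}^\beta(AR)\mathbf{B}^\gamma u_h \in L^2$, so $AR \in \Re$. (Both computations also use that the adjoint of a residual operator is residual, so only one of the two one-sided statements truly needs a separate argument — I would note this to shorten the write-up.)

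The main obstacle is the bookkeeping in the claim that $\mathbf{B}^\beta A$ and $A\mathbf{B}^\gamma$ remain second microlocal operators of symbol class $S^{m,l}(S_\mathrm{tot})$: multiplying by $\mathbf{v}_j \cdot hD_x$ on either side corresponds, after right/left reduction (Theorem \ref{composition theorem}), to the symbol operation of multiplication by $\mathbf{v}_j \cdot \xi$ plus $h\partial$-corrections, and one must check that these operations respect the weighting by $\rho_\mathrm{sf}$ and $\rho_\mathrm{ff}$. This is exactly where the explicit lift \eqref{hard lift} of $h\partial_{\xi_i}$ into a $b$-vector field $H\vec V \in \mathcal{V}_b$ is doing the work: it guarantees that the $h\partial_\xi$-corrections do not worsen the side-face order, while multiplication by $\mathbf{v}_1\cdot\xi = \rho_\mathrm{ff}$ (in the standard chart) manifestly cannot worsen either face. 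Once this is in hand the rest is a routine composition of two mapping estimates, and I would present it as such.
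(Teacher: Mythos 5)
There is a genuine gap, and it sits at the heart of both of your estimates: the claim that $h^{-|\beta|}\mathbf{B}^\beta A$ (equivalently $h^{-|\gamma|}A\mathbf{B}^\gamma$) is $L^2$-bounded up to the fixed power $h^{-l}$ when $A$ has symbol in $S^{m,l}(S_\mathrm{tot})$. Composing with $\mathbf{B}^\beta$ does improve front-face decay by $\rho_\mathrm{ff}^{|\beta|}$, but the accompanying factor $h^{-|\beta|}=\rho_\mathrm{sf}^{-|\beta|}\rho_\mathrm{ff}^{-|\beta|}$ cancels exactly that gain and leaves a loss $\rho_\mathrm{sf}^{-|\beta|}$ at the side face: in the standard chart the symbol of $h^{-|\beta|}\mathbf{B}^\beta$ is $\rho_\mathrm{sf}^{-|\beta|}\,\Xi_2^{\beta_2}\cdots\Xi_d^{\beta_d}$, so $h^{-|\beta|}\mathbf{B}^\beta A\in\Psi^{m+|\beta|,l}_{2,h}(\mathcal{C})$, and Proposition \ref{boundedness prop} yields boundedness (after removing $h^{-l}$) only when $|\beta|\le l-m$. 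Since Condition \ref{involutizing condition} must hold for \emph{all} multi-indices, no fixed order works with your factorization; already $h^{-1}B_1A=(\mathbf{v}_1\cdot D_x)A$ acting on an exponential oscillating at frequency $\xi^0/h$ with $\xi^0\in\mathrm{supp}\,a$ off $\mathcal{C}$ has norm of size $h^{-1}$, and the loss compounds with $|\beta|$. So the step ``composing the two estimates gives the required mapping property'' does not go through, for either $AR$ or $RA$.

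The repair --- and this is what the paper's proof does --- is never to let the factors $(\mathbf{V}/h)^\beta$ act as an operator by themselves: in the oscillatory integral for $\mathbf{B}^\beta (AR)u$ one writes $(\mathbf{V}/h)^\beta e^{\frac{i}{h}(x-y)\cdot\xi}=(-1)^{|\beta|}\bigl(\tfrac{h}{\mathbf{V}}\Theta_y\bigr)^\beta e^{\frac{i}{h}(x-y)\cdot\xi}$ and integrates by parts, so that by Leibniz the characteristic derivatives are shared between the amplitude (where $\Theta_y^\mu a$ stays in the same symbol class, costing nothing) and $Ru$ (where they, together with their $h^{-1}$'s, are absorbed by the involutivity of $R$). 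Equivalently, on the operator level one commutes the $h^{-1}B_j$'s through $A$ to the right --- each commutator is a quantization of an $x$-derivative of $a$, with no $h^{-1}$ loss --- so every unabsorbable factor lands on $R$, which can take arbitrarily many. Your aside about ``using the reduction theorem to move the $B_j$'s through'' points in this direction, but the estimate you actually invoke is the failing factorization. Note also a second issue your write-up skips: even after the transfer one faces $h^{l}\,{}^h\mathrm{Op_r}(\Theta_y^\mu a)\in\Psi^{m-l,0}_{2,h}(\mathcal{C})$, bounded only if $m\le l$; the paper handles $m>l$ by a further integration by parts with $\bigl(\tfrac{h}{\mathbf{v}_1\cdot\xi}\Theta_{y_1}\bigr)^p$, gaining $\rho_\mathrm{sf}^p$ on the amplitude at the price of yet more characteristic derivatives on $Ru$, again absorbed by $R$. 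Your reductions of $RA$ to $AR$ via adjoints and of the residual-times-residual case are fine.
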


\begin{proof}
Let $a\in S^{m,l}(S_\mathrm{tot})$ and $A = {}^h \mathrm{Op}_r(a)$.  Let $R\in\Re^{l'}$.  We show that $AR\in\Re^{l+l'}$.  Taking adjoints then implies $RA\in\Re^{l+l'}$.

Let $\Theta_{x_j} = \mathbf{v}_j\cdot D_x$ and $\Theta_x^\beta = \Theta_{x_1}^{\beta_1}\cdots\Theta_{x_d}^{\beta_d}$.  Then, for $u\in L^2(\mathbb{T}^n)$,
\begin{align*}
h^{l+l'} \Theta_x^\beta A R u(x) &= (-1)^{|\beta|} h^{l+l'} \int\int \left(\frac{\mathbf{V}}{h}\right)^\beta \left(\frac{h}{\mathbf{V}} \Theta_y\right)^\beta e^{\frac{i}{h}(x-y)\cdot\xi} a(y,\xi;h) Ru(y)\ dy \dbar\xi \\
&= \sum_{\mu+\nu = \beta} \frac{\beta!}{\mu!\nu!} h^l \int\int e^{\frac{i}{h}(x-y)\cdot\xi} \Theta_y^\mu a(y,\xi;h) \left[h^{l'} \Theta_y^\nu Ru(y)\right] dy \dbar\xi.
\end{align*}
The bracketed term lies in $L^2$, since $R$ is involutizing.  And $h^l\ {}^h \mathrm{Op}_r(\Theta_y^\mu a)\in\Psi^{m-l,0}_{2,h}(\mathcal{C})$ so, if $m\leq l$, it is $L^2$ bounded.  Instead, if $m>l$, choose $p\geq m-l$.  As before, locally we may take $\rho_\mathrm{sf} = h/{\mathbf{v}_1\cdot\xi}$ as side face defining function.  Then for $\mu+\nu = \beta$,
\[
(-1)^p h^l \int\int \left(\frac{h}{\mathbf{v}_1\cdot\xi} \Theta_{y_1}\right)^p e^{\frac{i}{h}(x-y)\cdot\xi} \Theta_y^\mu a(y,\xi;h) \left[h^{l'} \Theta_y^\nu Ru(y)\right] dy \dbar\xi
\]
\[
= \sum_{i+j = p} \frac{p!}{i!j!} h^l \int\int \rho_\mathrm{sf}^p e^{\frac{i}{h}(x-y)\cdot\xi} \Theta_{y_1}^i \Theta_y^\mu a(y,\xi;h) \left[h^{l'} \Theta_{y_1}^j \Theta_y^\nu Ru(y)\right] dy \dbar\xi.
\]
This time, the amplitude belongs to $S^{m-l-p,0}(S_\mathrm{tot})$, so by Proposition \ref{boundedness prop} the operator is $L^2$ bounded.  Thus, $AR$ satisfies Condition \ref{involutizing condition}.

Meanwhile, it is easily seen that for $R_1\in\Re^l$, $R_2\in\Re^{l'}$, we have $R_1 R_2\in\Re^{l+l'}$.
\end{proof}

\section{\textsc{Microsupport, parametrices \& second wavefront}\label{sec:second principal}}

\subsection{Second microsupport}

Let $S_\mathrm{tot}$ and $S_\mathrm{pr}$ be symbol spaces associated to any linear coisotropic $\mathcal{C}$.  Recall that $S_\mathrm{pr}$ may be identified with the side face of $S_\mathrm{tot}$.

\begin{definition}
For $a\in S^{m,l}(S_\mathrm{tot})$, define the \emph{essential support} of $a$ by:
\[
S_\mathrm{pr}\backslash\mathrm{ess \ supp}_l(a) := \{p\in S_\mathrm{pr} \ | \ \exists \varphi\in C^\infty_c (S_\mathrm{tot}), \varphi(p)\neq 0, \varphi a\in S^{-\infty,l}(S_\mathrm{tot})\}.
\]
If $A = {}^h \mathrm{Op_r}(a)$ for $a\in S^{m,l}(S_\mathrm{tot})$, then the \emph{second microsupport} ${}^2 \mathrm{WF}_l'(A)$ of $A$ is given by ${}^2 \mathrm{WF}_l'(A) := \mathrm{ess \ supp}_l(a)$.  If $A \in \Re^l$, define ${}^2 \mathrm{WF}_l'(A) = \emptyset$.
\end{definition}

Second microsupport obeys the usual laws of microsupports:

\begin{prop} \label{laws of microsupports}
Let $A,B\in\Psi^{m,l}_{2,h}(\mathcal{C})$, $D\in\Psi^{m',l'}_{2,h}(\mathcal{C})$.  Then ${}^2 \mathrm{WF}'$ satisfies $${}^2 \mathrm{WF}_l'(A+B) \subset {}^2 \mathrm{WF}_l'(A) \cup {}^2 \mathrm{WF}_l'(B), \ \ {}^2 \mathrm{WF}_{l+l'}'(AD) \subset {}^2 \mathrm{WF}_l'(A) \cap {}^2 \mathrm{WF}_{l'}'(D).$$
\end{prop}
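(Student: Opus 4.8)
The plan is to reduce both inclusions to statements about essential supports of symbols, using the reduction machinery of \autoref{composition theorem} and the fact (Propositions \ref{quants are residual} and \ref{two-sided ideal prop}) that residual operators contribute nothing to the second microsupport. First I would fix right-quantized representatives: write $A = {}^h\mathrm{Op}_r(a)$, $B = {}^h\mathrm{Op}_r(b)$ with $a,b\in S^{m,l}(S_\mathrm{tot})$, and $D = {}^h\mathrm{Op}_r(d)$ with $d\in S^{m',l'}(S_\mathrm{tot})$, absorbing any residual summands in the definitions of $A,B,D$ into the error terms (which changes nothing, since residual operators have empty second microsupport by definition). For the sum, $A+B = {}^h\mathrm{Op}_r(a+b)$ up to a residual operator, so ${}^2\mathrm{WF}_l'(A+B) = \mathrm{ess\ supp}_l(a+b)$, and the elementary observation that if $p\notin\mathrm{ess\ supp}_l(a)\cup\mathrm{ess\ supp}_l(b)$ then one can choose a single $\varphi\in C^\infty_c(S_\mathrm{tot})$ with $\varphi(p)\neq 0$ and $\varphi a,\varphi b\in S^{-\infty,l}(S_\mathrm{tot})$ (e.g.\ take the product of the two cutoffs, or a cutoff supported in the intersection of the two neighborhoods), hence $\varphi(a+b)\in S^{-\infty,l}(S_\mathrm{tot})$, gives the containment.

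For the product, the key point is that away from $\mathrm{ess\ supp}_l(a)$ the symbol $a$ is residual in a neighborhood, and likewise for $d$ away from $\mathrm{ess\ supp}_{l'}(d)$. Concretely: fix $p\in S_\mathrm{pr}$ with $p\notin\mathrm{ess\ supp}_l(a)\cap\mathrm{ess\ supp}_{l'}(d)$; without loss of generality $p\notin\mathrm{ess\ supp}_{l'}(d)$, so there is $\psi\in C^\infty_c(S_\mathrm{tot})$ with $\psi\equiv 1$ near $p$ and $(1-\psi)$-away part handled: more precisely choose $\psi$ with $\psi(p)\neq 0$ and $\psi d\in S^{-\infty,l'}(S_\mathrm{tot})$. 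Then ${}^h\mathrm{Op}_r(\psi d)\in\Re^{l'}$ by Proposition \ref{quants are residual}, so $AD = {}^h\mathrm{Op}_r(a){}^h\mathrm{Op}_r(\psi d) + {}^h\mathrm{Op}_r(a){}^h\mathrm{Op}_r((1-\psi)d)$, and the first term is residual of order $l+l'$ since $\Re$ is a two-sided ideal (Proposition \ref{two-sided ideal prop}, using $A\in\Psi^{m,l}_{2,h}$). By \autoref{composition theorem} and the composition law, the second term is ${}^h\mathrm{Op}_r(c)$ plus a residual operator, where $c\in S^{m+m',l+l'}(S_\mathrm{tot})$ has an asymptotic expansion whose leading term is $a\cdot(1-\psi)d$ and whose remaining terms all carry factors of $(1-\psi)d$ or its derivatives; in particular $c$ vanishes to infinite order (i.e.\ lies in $S^{-\infty,l+l'}$ locally) on the open set where $\psi\equiv 1$, which contains a neighborhood of $p$. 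Hence $p\notin{}^2\mathrm{WF}_{l+l'}'(AD)$. The symmetric argument handles the case $p\notin\mathrm{ess\ supp}_l(a)$, using that $\Re$ is a left ideal.

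I would organize the write-up as: (i) a short lemma or inline remark that essential support localizes, i.e.\ $p\notin\mathrm{ess\ supp}_l(a)$ iff there is $\varphi\in C^\infty_c$ equal to $1$ near $p$ with $\varphi a\in S^{-\infty,l}$ (equivalently $(1-\varphi)a$ agrees with $a$ near $p$, modulo $S^{-\infty}$); (ii) the additive case, one line; (iii) the multiplicative case as above, invoking \autoref{composition theorem}, Corollary \ref{composition law}, Proposition \ref{quants are residual}, and Proposition \ref{two-sided ideal prop}. The main obstacle, such as it is, is bookkeeping: one must be careful that the asymptotic expansion of the composed symbol $c$ in Corollary \ref{composition law} is built from derivatives of the factor symbols, so that a factor which is $S^{-\infty}$ near $p$ forces every term in the expansion — hence $c$ itself, by the asymptotic-summation property of the calculus — to be $S^{-\infty}$ near $p$; and that the remainder in right reduction is genuinely residual (empty second microsupport) rather than merely lower order. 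Both facts are already in hand from Section \ref{section on residual ops}, so no new estimates are needed.
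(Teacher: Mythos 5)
The paper states this proposition without proof, treating it as one of the ``usual laws,'' so there is no argument of the paper's to compare against; your proof is correct and is precisely the routine argument one would supply, reducing both inclusions to essential-support statements via right reduction (\autoref{composition theorem}), discarding the localized factor through Proposition \ref{quants are residual} and the ideal property of $\Re$ (Proposition \ref{two-sided ideal prop}), and correctly flagging the two points needing care: upgrading the cutoff in the definition of essential support to one identically $1$ near $p$, and using asymptotic summability to pass from ``every expansion term of the composed symbol vanishes near $p$'' to ``the composed symbol lies in $S^{-\infty,l+l'}$ near $p$.'' The only thing worth adding explicitly is that ${}^2 \mathrm{WF}_l'$ of a general element of $\Psi^{m,l}_{2,h}(\mathcal{C})$ (left quantization plus residual) is independent of the chosen right-reduced representative, which follows from Lemma \ref{lem:more modest} exactly as in the paper's discussion of the principal symbol.
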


\subsection{Principal symbols of second microlocal operators}

Let $A\in\Psi^{m,l}_{2,h}(\mathcal{C})$.  Suppose $A = {}^h \mathrm{Op_r}(a) + R$ for some $a\in S^{m,l}(S_\mathrm{tot})$ and residual operator $R\in\Re^l$.

\begin{definition}
The \emph{principal symbol} of $A$ is
\[
{}^2 \sigma_{m,l}(A) := (h^m a)\restriction_\mathrm{sf} \ \in S^{l-m}(S_\mathrm{pr}).
\]
\end{definition}

But the pair $(a,R)$ is not unique.  Suppose we also have $a'\in S^{m,l}(S_\mathrm{tot})$, $R'\in\Re^l$ such that $A = {}^h \mathrm{Op_r}(a') + R'$.  Then
\[
{}^h \mathrm{Op_r}(a - a') = R' - R \in \Re^l.
\]
Thus, while \emph{a priori} ${}^h \mathrm{Op_r}(a - a')\in\Psi^{m,l}_{2,h}(\mathcal{C})$, in fact ${}^h \mathrm{Op_r}(a - a')\in\Re^l$.  This principal symbol will be well defined if it is independent of the choice of right reduction.  This amounts to showing that the difference $a-a'$, which \emph{a priori} belongs to $S^{m,l}(S_\mathrm{tot})$ for whatever value of $m\in\mathbb{R}$, in fact decays at the side face of $S_\mathrm{tot}$.  We claim the stronger result that $a-a'\in S^{-\infty,l}(S_\mathrm{tot})$.




\begin{lemma} \label{lem:more modest}
For $b\in S^{m,l}(S_\mathrm{tot})$, suppose that ${}^h \mathrm{Op_r}(b)\in\Re^l$.  Then $b\in S^{-\infty,l}(S_\mathrm{tot})$.
\end{lemma}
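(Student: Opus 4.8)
The plan is to exploit the involutizing condition on ${}^h\mathrm{Op_r}(b)$ directly, together with the boundedness/Calderón–Vaillancourt machinery already set up, to extract decay of $b$ at the side face of $S_\mathrm{tot}$. Work locally near the corner of $S_\mathrm{tot}$ in the coordinates used throughout Section~\ref{section on residual ops}: extend $\{\mathbf v_1,\ldots,\mathbf v_d\}$ to a basis of $\mathbb{R}^n$, write $\zeta=\mathbf v_1\cdot\xi$ for the front-face defining function and $\rho_\mathrm{sf}=H=h/(\mathbf v_1\cdot\xi)$ for the side-face defining function, with remaining fiber coordinates $\Xi=(\mathbf v_2\cdot\xi/\zeta,\ldots,\mathbf v_d\cdot\xi/\zeta)$ and $\mathbf W$. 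By a partition of unity (as in the proof of Proposition~\ref{boundedness prop}) it suffices to treat a piece of $b$ supported where $\mathbf v_1\cdot\xi\neq 0$, so these coordinates are valid on the support. The claim is that $b$, a priori in $\rho_\mathrm{sf}^{-m}\zeta^{-l}C^\infty_c(S_\mathrm{tot})$, in fact decays to infinite order at $\{\rho_\mathrm{sf}=0\}$, i.e.\ $b\in\zeta^{-l}\bigcap_m \rho_\mathrm{sf}^{-m}C^\infty_c(S_\mathrm{tot}) = S^{-\infty,l}(S_\mathrm{tot})$.

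The mechanism: Condition~\ref{involutizing condition} says $h^{-|\beta+\gamma|+l}\mathbf B^\beta{}^h\mathrm{Op_r}(b)\mathbf B^\gamma$ is $L^2$-bounded for every $\beta,\gamma$. Taking $\gamma=0$ and computing as in Proposition~\ref{quants are residual}, $h^{-|\beta|}\mathbf B^\beta{}^h\mathrm{Op_r}(b) = {}^h\mathrm{Op_r}(c_\beta)$ where $c_\beta$ has amplitude $(\mathbf V/h)^\beta b = \rho_\mathrm{sf}^{-|\beta|}\Xi^{(\beta_2,\ldots,\beta_d)}b$ up to the now-harmless left/right reduction remainder (which by definition lies in $\Re^{l-|\beta|}\subset\Re^l$ after multiplying by $h^l$, hence is $L^2$-bounded after the $h^l$ normalization). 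So $h^l\rho_\mathrm{sf}^{-|\beta|}\Xi^{(\beta_2,\ldots,\beta_d)}b$ is, for all $\beta_1$ and hence all powers of $\rho_\mathrm{sf}^{-1}$, the symbol of an $L^2$-bounded semiclassical operator. I would now run the Calderón–Vaillancourt argument of Proposition~\ref{boundedness prop} \emph{in reverse}: the point of that proof was that a symbol in $C^\infty_c(S_\mathrm{tot})$, i.e.\ one that stays bounded together with all $\rho_\mathrm{sf}$-lifted $b$-derivatives $\vec V$ from \eqref{hard lift}, quantizes to an $L^2$-bounded operator — but the \emph{failure} of boundedness detects growth at the side face. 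Concretely, if $b$ did not decay to order $N$ at $\{\rho_\mathrm{sf}=0\}$, then $h^l\rho_\mathrm{sf}^{-|\beta|}b$ with $|\beta|=N$ would have a genuine $\rho_\mathrm{sf}^{-(N-N_0)}$ singularity at the side face for some $N_0<N$; after the rescaling $\eta=\xi/h$ used in \eqref{estimate in C-V} this singularity translates into unbounded symbol seminorms, and one can then produce (e.g.\ by testing against a coherent-state/WKB family concentrating on the relevant part of the side face, where $\mathbf v_1\cdot\xi\to 0$) a sequence of $L^2$-normalized inputs whose images blow up, contradicting boundedness. Iterating over all $N$ forces decay of every order, i.e.\ $b\in S^{-\infty,l}$. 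The factor $\Xi^{(\beta_2,\ldots,\beta_d)}$ is an irrelevant smooth bounded multiplier on the support under consideration and does not obstruct this.

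The main obstacle is making the ``reverse Calderón–Vaillancourt'' step rigorous: boundedness of a pseudodifferential operator for \emph{every} power $\rho_\mathrm{sf}^{-N}$ of the symbol does not literally follow from a single converse estimate, so I need an argument that genuinely converts uniform-in-$N$ $L^2$-boundedness of $h^l\rho_\mathrm{sf}^{-N}b$ into pointwise decay of $b$ at the side face. The clean way is a localization argument: pick a point $p_0$ on the side face where $b$ (hypothetically) does not vanish to infinite order, choose $\varphi\in C^\infty_c(S_\mathrm{tot})$ supported in a small neighborhood of $p_0$ with $\varphi(p_0)\neq 0$, and observe that $\varphi\cdot{}^h\mathrm{Op_r}(b)$-type cutoffs together with the composition law (Corollary~\ref{composition law}) reduce matters to a model symbol on a half-space $\{\rho_\mathrm{sf}\geq 0\}$ that is essentially a function of $\rho_\mathrm{sf}$ alone times a fixed bump; for such a model one computes the operator norm of $\rho_\mathrm{sf}^{-N}$ times the bump explicitly (it is comparable to $\sup \rho_\mathrm{sf}^{-N}\times(\text{bump size})$ after the $\eta=\xi/h$ rescaling identifies $\rho_\mathrm{sf}=h/(\mathbf v_1\cdot\xi)$ with $1/(\mathbf v_1\cdot\eta)$), and uniform boundedness in $N$ forces the bump — hence $b$ near $p_0$ — to vanish on $\{\rho_\mathrm{sf}=0\}$ to all orders. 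Carrying this out is essentially the argument already present (in disguise) in the well-definedness discussion preceding the lemma, and I expect the bookkeeping of the reduction remainders — keeping track that each $h^{-|\beta|}\mathbf B^\beta{}^h\mathrm{Op_r}(b)$ differs from ${}^h\mathrm{Op_r}(\rho_\mathrm{sf}^{-|\beta|}\Xi^\bullet b)$ only by something in $\Re^{l}$ after $h^l$-normalization — to be the one place where care is needed, but not a serious difficulty given Theorem~\ref{composition theorem}.
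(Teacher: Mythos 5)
Your mechanism is the right one, and it is in fact the quantitative content behind the paper's one-line appeal to ``$\Re^l$ reduces to $O(h^\infty)$ along the side face'': composing on the left with $\widetilde{\mathbf{B}}^\beta$ is exact here, as in Proposition \ref{quants are residual} (the amplitude is $x$-independent, so $\widetilde{\mathbf{B}}^\beta\,{}^h \mathrm{Op_r}(b) = {}^h \mathrm{Op_r}\bigl((\mathbf{V}/h)^\beta b\bigr)$ with no reduction remainder to track -- your hedging about $\Re^{l-|\beta|}$ is unnecessary), and Condition \ref{involutizing condition} then makes $h^l \rho_\mathrm{sf}^{-N}\Xi^\bullet b$ the amplitude of an $h$-uniformly bounded operator for every $N$. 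But your final step has two soft spots. The smaller one: ``uniform boundedness in $N$'' is neither available (the involutizing condition gives a bound for each $\beta$ separately, with constants depending on $\beta$) nor needed. If $b$ vanishes to order exactly $j_0$ at the side face near a point $p_0$ in the \emph{interior} of the side face, then near $p_0$ the amplitude $h^l\rho_\mathrm{sf}^{-N}b$ equals $h^{-(N+m-j_0-l)}$ times a function smooth in $(x,\xi,h)$ and nonvanishing at $p_0$, and a wave-packet test at $p_0$ for a single $N$ with $N+m-j_0-l>0$ already gives an operator norm $\gtrsim h^{-(N+m-j_0-l)}$, contradicting boundedness for that one $N$.

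The more serious issue is that your localization tacitly treats the bad point as if $\mathbf{v}_1\cdot\xi$ were of unit size there, while your parenthetical about concentrating ``where $\mathbf{v}_1\cdot\xi\to 0$'' points at the corner -- which is precisely where the model computation you describe (identifying $\rho_\mathrm{sf}^{-1}$ with $\mathbf{v}_1\cdot\eta$ and reading off the operator norm as $\sup\rho_\mathrm{sf}^{-N}$ times the bump size) is not justified: at the corner the symbol is only conormal, and the scales $h\lesssim \mathbf{v}_1\cdot\xi\lesssim 1$ are not resolved by a coherent state of width $\sqrt{h}$. The missing observation -- which is essentially the entire content of the paper's proof -- is that failure of infinite-order vanishing at the side face is always visible at an \emph{interior} point of the side face: expanding $b\sim\sum_j\rho_\mathrm{sf}^j b_j$ near the corner, a coefficient $b_{j_0}$ nontrivial at the corner is, by continuity, nonzero at some point with $\rho_\mathrm{ff}>0$; there the blowup is trivial, $\rho_\mathrm{sf}$ is a nonvanishing multiple of $h$, and ${}^h \mathrm{Op_r}(b)$ is an ordinary $h$-pseudodifferential operator which, being residual, must be microlocally $O(h^\infty)$ away from $\mathcal{C}$ (by exactly the $\widetilde{B}_1^N$ mechanism you set up, via ellipticity of $\mathbf{v}_1\cdot\xi$ there). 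Once you insert that reduction, your testing route does close the argument, but more heavily than the paper, whose proof replaces your explicit operator-norm lower bound by the standard fact that an ordinary $h$-quantization which is microlocally $O(h^\infty)$ has total symbol $O(h^\infty)$ there.
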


\begin{proof}
Let $\mathcal{C} = \{\mathbf{v}_1\cdot\xi = \ldots = \mathbf{v}_d\cdot\xi = 0\}$ as usual.  We use the same coordinates in $S_\mathrm{tot}$ as in the proof of Proposition \ref{boundedness prop}: $\rho_\mathrm{ff} = \mathbf{v}_1\cdot\xi$, $\Xi$, and $\mathbf{W}$.  (Note that $\rho_\mathrm{sf} = h / {(\mathbf{v}_1\cdot\xi)}$, as we must have $\rho_\mathrm{ff}\times\rho_\mathrm{sf} = h$.)

Expand $b$ in powers of $\rho_\mathrm{sf}$, near the corner $\{\rho_\mathrm{sf} = \rho_\mathrm{ff} = 0\}$:
\[
b(x,\rho_\mathrm{sf},\rho_\mathrm{ff},\Xi,\mathbf{W}) \sim \sum_j \rho_\mathrm{sf}^j b_j(x,\rho_\mathrm{ff},\Xi,\mathbf{W});
\]
the coefficients $b_j$ can be taken to be smooth functions on the side face.

Next, suppose $b\neq O(\rho_\mathrm{sf}^\infty)$.  Then there must exist a coefficient $b_{j_0}$ which is nontrivial at the corner.  By continuity of $b_{j_0}$, we have $b_{j_0}$ nontrivial in a neighborhood $\{\rho_\mathrm{ff} < \epsilon\}$ (also localized in $x$, $\Xi$, and $\mathbf{W}$); i.e., there exists a point $(x^0,\rho_\mathrm{ff}^0 > 0,\Xi^0,\mathbf{W}^0)$ at which $b_{j_0}$ is nonzero.  So $\rho_\mathrm{sf}^{j_0} b_{j_0}\neq O(h^\infty)$ at $(x^0,\rho_\mathrm{sf} = 0,\rho_\mathrm{ff}^0,\Xi^0,\mathbf{W}^0)$.  Hence ${}^h \mathrm{Op_r}(b)$ cannot lie in $\Re^l$, since $\Re^l$ reduces to $O(h^\infty)$ along the side face.  (More generally, $\Psi_{2,h}(\mathcal{C})$ is really just $\widetilde{\Psi}_h(\mathbb{T}^n)$ microlocally away from the coisotropic.)
\end{proof}


Set ${}^2 \mathrm{char}_{m,l}(A) := \{\rho_\mathrm{ff}^{l-m} \ {}^2 \sigma_{m,l}(A) = 0\}$.  That is, the second characteristic set is the zero set of a smooth function on the principal symbol space.  We abuse notation by omitting the indices and writing ${}^2 \mathrm{char}_{m,l}$ simply as ${}^2 \mathrm{char}$.  The complementary notion is
\[
{}^2 \mathrm{ell}(A) = {}^2 \mathrm{ell}_{m,l}(A) := \{\rho_\mathrm{ff}^{l-m} \ {}^2 \sigma_{m,l}(A)\neq 0\}.
\]

Note that these definitions are independent of choice of front face defining function.  Next, we state some essential properties of the principal symbol map.

\begin{lemma} \label{short exact sequence}
\[
0\longrightarrow\Psi^{m-1,l}_{2,h}(\mathcal{C})\longrightarrow\Psi^{m,l}_{2,h}(\mathcal{C})\xrightarrow{{}^2 \sigma_{m,l}}S^{l-m}(S_\mathrm{pr})\longrightarrow 0
\]
is a short exact sequence.  Furthermore, the principal symbol map is a homomorphism: if $A\in\Psi^{m,l}_{2,h}(\mathcal{C})$, $B\in\Psi^{m',l'}_{2,h}(\mathcal{C})$, then ${}^2 \sigma_{m+m',l+l'}(AB)={}^2 \sigma_{m,l}(A) {}^2 \sigma_{m',l'}(B)$.
\end{lemma}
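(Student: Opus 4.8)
The plan is to establish the three defining properties of the short exact sequence in turn, then deduce multiplicativity from the composition law. First, surjectivity of ${}^2\sigma_{m,l}$: given any $f\in S^{l-m}(S_\mathrm{pr})$, I choose an extension $\tilde f\in C^\infty(S_\mathrm{tot})$ off the side face, set $a := \rho_\mathrm{sf}^{-m}\rho_\mathrm{ff}^{m-l}\tilde f$ (after multiplying by a cutoff to place it in $C^\infty_c$), and note that $a\in S^{m,l}(S_\mathrm{tot})$ with $(h^m a)|_\mathrm{sf} = (\rho_\mathrm{ff}^{l-m}\tilde f)|_\mathrm{sf}\cdot(h^m\rho_\mathrm{sf}^{-m})|_\mathrm{sf}$; using $h = \rho_\mathrm{ff}\rho_\mathrm{sf}$ the factor $h^m\rho_\mathrm{sf}^{-m} = \rho_\mathrm{ff}^m$, so ${}^2\sigma_{m,l}({}^h\mathrm{Op_r}(a)) = f$. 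The inclusion $\Psi^{m-1,l}_{2,h}(\mathcal{C})\hookrightarrow\Psi^{m,l}_{2,h}(\mathcal{C})$ is immediate since $S^{m-1,l}(S_\mathrm{tot})\subset S^{m,l}(S_\mathrm{tot})$ and $\Re^l$ is common to both; and any element of $\Psi^{m-1,l}$ has the form $\rho_\mathrm{sf}^{-(m-1)}\rho_\mathrm{ff}^{-l}C^\infty_c$ plus residual, so $h^m a$ carries an extra factor $h\rho_\mathrm{sf}^{-1}\cdot\rho_\mathrm{sf}^{-(m-1)} \sim \rho_\mathrm{ff}$ vanishing at the side face, whence ${}^2\sigma_{m,l}$ kills it.

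The crux is exactness in the middle: if $A\in\Psi^{m,l}_{2,h}(\mathcal{C})$ has ${}^2\sigma_{m,l}(A) = 0$, then $A\in\Psi^{m-1,l}_{2,h}(\mathcal{C})$. Write $A = {}^h\mathrm{Op_r}(a) + R$ with $R\in\Re^l$. First I must check this is well-posed: by Lemma \ref{lem:more modest}, the symbol $a$ is determined modulo $S^{-\infty,l}(S_\mathrm{tot})$ (and such symbols quantize to residual operators by Proposition \ref{quants are residual}), so the hypothesis ${}^2\sigma_{m,l}(A) = 0$, i.e. $(h^m a)|_\mathrm{sf}\equiv 0$, is independent of the representative. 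Now $(h^m a)|_\mathrm{sf} = 0$ means $h^m a = \rho_\mathrm{ff}^m\rho_\mathrm{sf}^{-m}\cdot\rho_\mathrm{sf}^m a$... more cleanly: $a\in\rho_\mathrm{sf}^{-m}\rho_\mathrm{ff}^{-l}C^\infty_c(S_\mathrm{tot})$, and $(h^m a)|_\mathrm{sf} = (\rho_\mathrm{ff}^{m-l}\cdot(\rho_\mathrm{sf}^m h^m\rho_\mathrm{sf}^{-m}a\rho_\mathrm{sf}^{-m}))$—I will instead argue directly that $\rho_\mathrm{sf}^m\rho_\mathrm{ff}^l a\in C^\infty_c$ vanishes on the side face $\{\rho_\mathrm{sf} = 0\}$ (this is exactly the content of ${}^2\sigma_{m,l}(A)=0$ after unwinding the powers of $h=\rho_\mathrm{ff}\rho_\mathrm{sf}$), so by Taylor's theorem $\rho_\mathrm{sf}^m\rho_\mathrm{ff}^l a = \rho_\mathrm{sf}\cdot g$ for some $g\in C^\infty_c(S_\mathrm{tot})$, giving $a\in\rho_\mathrm{sf}^{-(m-1)}\rho_\mathrm{ff}^{-l}C^\infty_c = S^{m-1,l}(S_\mathrm{tot})$ and hence $A\in\Psi^{m-1,l}_{2,h}(\mathcal{C})$.

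Finally, multiplicativity: for $A = {}^h\mathrm{Op_l}(a) + R_A\in\Psi^{m,l}_{2,h}$ and $B = {}^h\mathrm{Op_l}(b) + R_B\in\Psi^{m',l'}_{2,h}$, Corollary \ref{composition law} gives $AB = {}^h\mathrm{Op_l}(c) + R$ with $c\in S^{m+m',l+l'}(S_\mathrm{tot})$ and $R$ residual; moreover since the residual algebra is a two-sided ideal (Proposition \ref{two-sided ideal prop}) the cross terms $AR_B$, $R_AB$, $R_AR_B$ are all residual and do not affect the principal symbol. From the stationary phase expansion in the proof of Corollary \ref{composition law}, the leading term of $c$ is $ab$ (all later terms carry extra factors of $h$ and, after conversion to right quantization, lie in $S^{m+m'-1,l+l'}$), so $(h^{m+m'}c)|_\mathrm{sf} = (h^m a)|_\mathrm{sf}(h^{m'}b)|_\mathrm{sf}$, i.e. ${}^2\sigma_{m+m',l+l'}(AB) = {}^2\sigma_{m,l}(A)\,{}^2\sigma_{m',l'}(B)$. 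I expect the main obstacle to be the bookkeeping in the middle-exactness step: correctly tracking how the hypothesis $(h^m a)|_\mathrm{sf} = 0$ translates, via $h = \rho_\mathrm{ff}\rho_\mathrm{sf}$, into divisibility of $\rho_\mathrm{sf}^m\rho_\mathrm{ff}^l a$ by $\rho_\mathrm{sf}$ in $C^\infty_c(S_\mathrm{tot})$, and verifying that this is representative-independent using Lemma \ref{lem:more modest}.
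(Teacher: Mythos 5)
The paper gives no argument here (``The proof is straightforward, so we omit it''), so there is nothing to compare against; judged on its own, your route is the natural one and its two substantive parts are sound. For exactness in the middle you correctly reduce, via $h=\rho_\mathrm{ff}\rho_\mathrm{sf}$, the hypothesis $(h^m a)\restriction_\mathrm{sf}=0$ to the vanishing of $\rho_\mathrm{sf}^m\rho_\mathrm{ff}^l a\in C^\infty_c(S_\mathrm{tot})$ on the side face, then divide by $\rho_\mathrm{sf}$ by Taylor's theorem to land in $S^{m-1,l}(S_\mathrm{tot})$, and you correctly settle representative-independence with Lemma \ref{lem:more modest} and Proposition \ref{quants are residual}. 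The multiplicativity argument via Corollary \ref{composition law}, with Proposition \ref{two-sided ideal prop} absorbing the residual cross terms and \eqref{hard lift} guaranteeing that every correction term (including those from switching between left and right quantization) sits one order lower at the side face, is also correct.

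However, the surjectivity computation is wrong as written. With your $a=\rho_\mathrm{sf}^{-m}\rho_\mathrm{ff}^{m-l}\tilde f$ one has $h^m a=\rho_\mathrm{ff}^{2m-l}\tilde f$; since $\tilde f\in C^\infty(S_\mathrm{tot})$ can only be an extension of the smooth coefficient of $f$ (a general $f\in S^{l-m}(S_\mathrm{pr})$ is conormal, not smooth), the resulting principal symbol of ${}^h\mathrm{Op_r}(a)$ is $\rho_\mathrm{ff}^{m}f$, not $f$, and your displayed identity $(h^m a)\restriction_\mathrm{sf}=(\rho_\mathrm{ff}^{l-m}\tilde f)\restriction_\mathrm{sf}\cdot(h^m\rho_\mathrm{sf}^{-m})\restriction_\mathrm{sf}$ contradicts your own definition of $a$ (which carries $\rho_\mathrm{ff}^{m-l}$, not $\rho_\mathrm{ff}^{l-m}$). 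The repair is immediate: write $f=\rho_\mathrm{ff}^{m-l}g$ with $g$ smooth on $S_\mathrm{pr}$, extend $g$ to $\tilde g\in C^\infty_c(S_\mathrm{tot})$, and set $a=\rho_\mathrm{sf}^{-m}\rho_\mathrm{ff}^{-l}\tilde g\in S^{m,l}(S_\mathrm{tot})$, so that $h^m a=\rho_\mathrm{ff}^{m-l}\tilde g$ restricts to $f$ on the side face. A similar mix-up of the two boundary defining functions occurs in your check that $\Psi^{m-1,l}_{2,h}(\mathcal{C})$ maps into the kernel: for $a\in S^{m-1,l}(S_\mathrm{tot})$ one has $h^m a=\rho_\mathrm{ff}^{m-l}\rho_\mathrm{sf}\,c$ with $c$ smooth, and it is the factor $\rho_\mathrm{sf}$ that vanishes on the side face ($\rho_\mathrm{ff}$ does not), which is what kills the restriction. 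These are bookkeeping slips rather than missing ideas; with them corrected the proof is complete.
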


The proof is straightforward, so we omit it.

\begin{remark}
If $A\in\Psi^{m,l}_{2,h}(\mathcal{C})$, $B\in\Psi^{m',l'}_{2,h}(\mathcal{C})$, then
\[
{}^2 \sigma_{m+m'-1,l+l'}(i[A,B]) = \{{}^2 \sigma_{m,l}(A),{}^2 \sigma_{m',l'}(B)\},
\]
where the Poisson bracket is computed with respect to the symplectic form on $S_\mathrm{pr}$ lifted from the symplectic form on $T^* \mathbb{T}^n$.  See also Remark \ref{rmk:scaling}.
\end{remark}

\begin{remark} \label{one calculus in the other}
For each $m\in\mathbb{R}$, $\widetilde{\Psi}^{m}_{h}(\mathbb{T}^n)$ can be identified with a subset of $\Psi^{m,m}_{2,h}(\mathcal{C})$.  Locally, each element $A\in\widetilde{\Psi}^{m}_{h}(\mathbb{T}^n)$ is a quantization of a symbol $a\in h^{-m} C^\infty_c (T^* \mathbb{T}^n \times [0,1)_h)$ (or $a$ is residual in both semiclassical and differential filtrations).  If $\beta_\mathrm{tot}: S_\mathrm{tot} \rightarrow T^* \mathbb{T}^n \times [0,1)$ is the (smooth) blowdown map, then the pullback $\beta_\mathrm{tot}^* a$ quantizes to the corresponding element $A\in\Psi^{m,m}_{2,h}(\mathcal{C})$.  Likewise, we may identify the principal symbol of $A\in\widetilde{\Psi}^{m}_{h}(\mathbb{T}^n)$ with the second principal symbol of $A\in\Psi^{m,m}_{2,h}(\mathcal{C})$.
\end{remark}

\subsection{Global Parametrices}


\begin{lemma} \label{global parametrix}
Suppose that $A\in\Psi^{m,l}_{2,h}(\mathcal{C})$ is globally elliptic (i.e., ${}^2 \sigma_{m,l}(A)$ vanishes nowhere).  Then there exist $B,C\in\Psi^{-m,-l}_{2,h}(\mathcal{C})$ for which $AB-\mathrm{Id}\in\Re^0$, $CA-\mathrm{Id}\in\Re^0$; and $B$ and $C$ differ by an element of $\Re^{-l}$. 
\end{lemma}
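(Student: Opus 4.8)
The plan is to construct the parametrices by the standard elliptic iteration, carried out in the second microlocal calculus, using the symbol calculus (Lemma \ref{short exact sequence}), the composition law (Corollary \ref{composition law}), the asymptotic summation property of the calculus, and the fact that the residual ideal $\Re$ absorbs the errors (Proposition \ref{two-sided ideal prop}, Lemma \ref{lem:more modest}). First I would produce a rough right parametrix: since ${}^2\sigma_{m,l}(A)$ vanishes nowhere, the function $\bigl({}^2\sigma_{m,l}(A)\bigr)^{-1}$ is a well-defined element of $S^{m-l}(S_\mathrm{pr})$ (note that $\rho_\mathrm{ff}^{l-m}\,{}^2\sigma_{m,l}(A)$ is a nonvanishing smooth function on $S_\mathrm{pr}$, so its reciprocal is again a smooth function, and multiplying back by $\rho_\mathrm{ff}^{l-m}$ lands in the right symbol class). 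By surjectivity of the principal symbol map, choose $B_0\in\Psi^{-m,-l}_{2,h}(\mathcal{C})$ with ${}^2\sigma_{-m,-l}(B_0) = \bigl({}^2\sigma_{m,l}(A)\bigr)^{-1}$. Then by the homomorphism property, ${}^2\sigma_{0,0}(AB_0 - \mathrm{Id}) = 0$, so $E_1 := \mathrm{Id} - AB_0 \in \Psi^{-1,0}_{2,h}(\mathcal{C})$.

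Next I would iterate: set $B_N := B_0\bigl(\mathrm{Id} + E_1 + E_1^2 + \cdots + E_1^{N}\bigr)$, so that $AB_N = \mathrm{Id} - E_1^{N+1}$ with $E_1^{N+1}\in\Psi^{-(N+1),0}_{2,h}(\mathcal{C})$. Here I use Corollary \ref{composition law} repeatedly to know each $E_1^k$ lies in $\Psi^{-k,0}_{2,h}(\mathcal{C})$ up to a residual term (and residual terms only help, being a two-sided ideal). By the asymptotic summation property stated just after the definition of $\Psi_{2,h}(\mathcal{C})$, there is $B\in\Psi^{-m,-l}_{2,h}(\mathcal{C})$ with $B - B_N \in \Psi^{-m-N-1,-l}_{2,h}(\mathcal{C})$ for all $N$; then $AB - \mathrm{Id}$ lies in $\bigcap_N \Psi^{-N,0}_{2,h}(\mathcal{C})$, which by Lemma \ref{lem:more modest} (its symbol is $O(\rho_\mathrm{sf}^\infty)$, i.e.\ in $S^{-\infty,0}$, hence the operator, up to residual, is a quantization of a symbol that is residual; combined with Proposition \ref{quants are residual}) is contained in $\Re^0$. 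This gives the right parametrix $B$. Symmetrically — using that ellipticity of ${}^2\sigma_{m,l}(A)$ is the same condition on left and right, that adjoints of residual operators are residual, and that $\Re$ is a two-sided ideal — I would run the same argument on the left to obtain $C\in\Psi^{-m,-l}_{2,h}(\mathcal{C})$ with $CA - \mathrm{Id}\in\Re^0$.

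Finally, to see $B$ and $C$ agree modulo $\Re^{-l}$, compute $C - B = C(AB) - (CA)B + C(\mathrm{Id}-AB) - (\mathrm{Id}-CA)B$; more cleanly, $C - B = C(\mathrm{Id} - AB) + (CA - \mathrm{Id})B$, and since $\mathrm{Id}-AB\in\Re^0$, $CA-\mathrm{Id}\in\Re^0$, $C,B\in\Psi^{-m,-l}_{2,h}(\mathcal{C})$, Proposition \ref{two-sided ideal prop} gives both terms in $\Re^{-l}$, hence $C - B\in\Re^{-l}$. The only mildly delicate point — the main obstacle — is the bookkeeping of the residual orders throughout the iteration: one must check that composing $B_0$ (which carries $\Re^{-l}$-type errors of weight $-l$) against the Neumann series of $E_1$ (whose residual errors have weight $0$) keeps all accumulated residual remainders in $\Re^{-l}$ or better, so that the asymptotic sum can still be carried out within a fixed residual class and the final errors collapse to $\Re^0$ on the nose. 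This is a routine but careful application of Propositions \ref{two-sided ideal prop} and \ref{laws of microsupports} together with the weights in Corollary \ref{composition law}, and I would spell it out but not belabor it.
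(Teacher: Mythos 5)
Your proposal is correct and is exactly the argument the paper intends: the paper's proof is just a pointer to the standard elliptic iteration (cf.\ \cite[Theorem~3.4]{Wu13}), i.e.\ invert the principal symbol via the short exact sequence, run the Neumann series with the composition law, asymptotically sum, use Proposition \ref{quants are residual} and the ideal property of $\Re$ to collapse the error into $\Re^0$, and compare $B$ and $C$ via $C-B = C(\mathrm{Id}-AB)+(CA-\mathrm{Id})B \in \Re^{-l}$. (Only a cosmetic slip: the step identifying the $S^{-\infty,0}$ error as residual rests on Proposition \ref{quants are residual}, not on Lemma \ref{lem:more modest}, which goes in the opposite direction.)
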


\begin{proof}
An iterative construction similar to the proof of \cite[Theorem~3.4]{Wu13}.
\end{proof}

\subsection{Microlocal Parametrices}

Let $\mathcal{C}=\{\mathbf{v}_1\cdot\xi = \ldots = \mathbf{v}_d\cdot\xi = 0\}$ for $\mathbf{v}_i\in\mathbb{R}^n$.  Let $A\in\Psi^{m,l}_{2,h}(\mathcal{C})$.  We partition the principal symbol space $S_\mathrm{pr}$, and restrict our focus to the lift $L_1$ of $\{\mathbf{v}_1\cdot\xi\neq 0\}\subset T^* \mathbb{T}^n$ (we have likewise partitioned $S_\mathrm{tot}$).  Extend $\{\mathbf{v}_1,\ldots,\mathbf{v}_d\}$ to a basis
\[
\{\mathbf{v}_1,\ldots,\mathbf{v}_d,\mathbf{w}_{d+1},\ldots,\mathbf{w}_n\}
\]
for $\mathbb{R}^n$.  Near the corner $SN(\mathcal{C}) = \partial S_\mathrm{pr}$, valid coordinates are $x$, $\zeta = \mathbf{v}_1\cdot\xi$, $$\Xi:=\left(\frac{\mathbf{v}_2\cdot\xi}{\mathbf{v}_1\cdot\xi},\ldots,\frac{\mathbf{v}_d\cdot\xi}{\mathbf{v}_1\cdot\xi}\right), \ \mathrm{and} \ \mathbf{W}:=(\mathbf{w}_{d+1}\cdot\xi,\ldots,\mathbf{w}_n\cdot\xi);$$ $h=0$ on the entirety of $S_\mathrm{pr}$.

\begin{lemma} \label{microlocal parametrix}
Take a point $\overline{p} = \left(\overline{x},\overline{\zeta},\overline{\Xi},\overline{\mathbf{W}}\right)\in {}^2 \mathrm{ell}(A) \cap L_1$.  Then there exists $B\in\Psi^{-m,-l}_{2,h}(\mathcal{C})$ such that
\[
\overline{p}\notin{}^2 \mathrm{WF}_0'(AB-\mathrm{Id}) \cup {}^2 \mathrm{WF}_0'(BA-\mathrm{Id}).
\]
\end{lemma}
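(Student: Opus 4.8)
The plan is to localize near $\overline p$ and construct $B$ as a left quantization of a symbol built by inverting ${}^2\sigma_{m,l}(A)$ on a neighborhood of $\overline p$, then correcting by an asymptotic Neumann series. First I would pick a cutoff $\varphi\in C^\infty_c(S_\mathrm{tot})$ with $\varphi\equiv 1$ near $\overline p$ and with support so small that $\rho_\mathrm{ff}^{l-m}\,{}^2\sigma_{m,l}(A)\neq 0$ on $\operatorname{supp}\varphi\cap S_\mathrm{pr}$; this is possible since $\overline p\in{}^2\mathrm{ell}(A)\cap L_1$ and ${}^2\mathrm{ell}(A)$ is open. Write $A={}^h\mathrm{Op}_r(a)+R$ with $a\in S^{m,l}(S_\mathrm{tot})$, $R\in\Re^l$; shrinking $\varphi$ further, $a$ is bounded below in absolute value on $\operatorname{supp}\varphi$ after extracting the factor $\rho_\mathrm{sf}^{-m}\rho_\mathrm{ff}^{-l}$, so $b_0:=\varphi/a\in S^{-m,-l}(S_\mathrm{tot})$ is a well-defined symbol (here I use the local coordinates $x,\zeta,\Xi,\mathbf W$ from the statement, in which $\rho_\mathrm{ff}=\zeta$ and one verifies directly that $1/a$ lies in the required symbol class, since $a$ is smooth and elliptic on the support of $\varphi$).

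Next I would set $B_0={}^h\mathrm{Op}_l(b_0)$ and compute $B_0 A$ using the Composition Law, Corollary \ref{composition law}: $B_0A={}^h\mathrm{Op}_l(c_0)+R'$ with $c_0\in S^{0,0}(S_\mathrm{tot})$ and $R'\in\Re^0$, where ${}^2\sigma_{0,0}(B_0A)=b_0\cdot{}^2\sigma_{m,l}(A)\equiv 1$ near $\overline p$ by Lemma \ref{short exact sequence} and the homomorphism property. Hence $E_0:=\mathrm{Id}-B_0A$ has principal symbol vanishing near $\overline p$, i.e. $\overline p\notin{}^2\mathrm{WF}_0'(E_0)$ and $E_0\in\Psi^{0,0}_{2,h}(\mathcal C)$ of order $\le -1$ in the $m$-filtration microlocally near $\overline p$. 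I would then form the asymptotic Neumann series: by asymptotic summation in $\Psi_{2,h}(\mathcal C)$ (stated right after the definition of the calculus) there is $F\in\Psi^{0,0}_{2,h}(\mathcal C)$ with $F\sim\sum_{k\ge 0}E_0^k$, and composing $C:=F B_0$ gives $CA=\mathrm{Id}-E_\infty$ with $\overline p\notin{}^2\mathrm{WF}_0'(E_\infty)$, after discarding a residual term; note $C\in\Psi^{-m,-l}_{2,h}(\mathcal C)$. Symmetrically, inverting on the other side (or using $AB_0=\mathrm{Id}-E_0'$ and summing) produces a right parametrix $B'$, and the standard argument $C=C(AB')=(CA)B'$ modulo $\Re$ shows $C$ and $B'$ agree near $\overline p$ modulo operators with $\overline p$ outside their second microsupport; take $B$ to be either one.

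The main obstacle — really the only nontrivial point, everything else being the formal parametrix machinery — is verifying that $1/a$, and more generally all the symbols produced in the Neumann iteration, genuinely lie in the symbol classes $S^{\bullet,\bullet}(S_\mathrm{tot})$ near the corner $SN(\mathcal C)$. This requires checking that repeated application of $b$-vector fields in $\mathcal V_b$ (equivalently, the vector fields $\vec V$ from \eqref{hard lift} together with $\partial_x$) to a reciprocal or product of elliptic symbols stays bounded against the appropriate powers of $\rho_\mathrm{sf},\rho_\mathrm{ff}$; this is the same computation underlying Proposition \ref{boundedness prop} and the right reduction Theorem \ref{composition theorem}, transplanted to the blown-up coordinates, and the key facts are that $\rho_\mathrm{ff}^{l-m}\,{}^2\sigma_{m,l}(A)$ is a smooth non-vanishing function on $\operatorname{supp}\varphi\cap S_\mathrm{pr}$ and that $\mathcal V_b$ is closed under Lie bracket and acts tamely on quotients of symbols. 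Once this regularity is in hand, the construction is routine and produces $B\in\Psi^{-m,-l}_{2,h}(\mathcal C)$ with $\overline p\notin{}^2\mathrm{WF}_0'(AB-\mathrm{Id})\cup{}^2\mathrm{WF}_0'(BA-\mathrm{Id})$.
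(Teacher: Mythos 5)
Your overall strategy --- invert the symbol on a small neighborhood of $\overline p$ inside the elliptic set and correct iteratively --- is the same standard construction the paper invokes (the microlocal parametrix of Lemma 4.3 of \cite{Me-2}, transplanted to $\Psi_{2,h}(\mathcal C)$), and your first step is fine: writing $a=\rho_\mathrm{sf}^{-m}\rho_\mathrm{ff}^{-l}\tilde a$ with $\tilde a\in C^\infty_c(S_\mathrm{tot})$ nonvanishing on $\operatorname{supp}\varphi$, one gets $\varphi/a=\rho_\mathrm{sf}^{m}\rho_\mathrm{ff}^{l}\,(\varphi/\tilde a)\in S^{-m,-l}(S_\mathrm{tot})$ at once, since the quotient of a compactly supported smooth function by a nonvanishing smooth one is smooth. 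So the point you single out as the main obstacle is in fact the easy part, given that $S^{m,l}(S_\mathrm{tot})$ is defined as a weighted $C^\infty_c$ space.

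The genuine gap is in the correction step. From ${}^2\sigma_{0,0}(B_0A)\equiv 1$ near $\overline p$ you conclude that $\overline p\notin{}^2\mathrm{WF}_0'(E_0)$ for $E_0=\mathrm{Id}-B_0A$; that does not follow: vanishing of the \emph{principal} symbol near $\overline p$ only makes $E_0$ one order lower there, while the subleading terms of the composition expansion (derivatives of $b_0$ against derivatives of $a$) are generally nonzero at $\overline p$, so $\overline p$ remains in the microsupport. (Were the claim true, you would already be done with $B=B_0$ and no Neumann series would be needed.) More seriously, the asymptotic Neumann series $F\sim\sum_{k\ge 0}E_0^k$ is not legitimate: the asymptotic summation available in $\Psi_{2,h}(\mathcal C)$ requires terms of strictly decreasing order $\Psi^{m-j,l}_{2,h}(\mathcal C)$, whereas $E_0^k\in\Psi^{0,0}_{2,h}(\mathcal C)$ for every $k$ --- the symbol of $E_0$ is $1-\varphi$ plus lower order, which is of order $0$ away from $\operatorname{supp}\varphi$, and $E_0$ drops an order only microlocally near $\overline p$. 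This is exactly where the microlocal construction differs from the globally elliptic one (Lemma \ref{global parametrix}), and it is the content of ``adapting'' the cited construction. The standard repair: either solve $b\#a\equiv\varphi$ iteratively, with every correction $b_j\in S^{-m-j,-l}(S_\mathrm{tot})$ supported in the fixed neighborhood $\operatorname{supp}\varphi$ where $a$ is invertible, asymptotically sum the $b_j$, and then use that ${}^h \mathrm{Op_l}(\varphi)-\mathrm{Id}$ has $\overline p$ outside its microsupport because $\varphi\equiv 1$ near $\overline p$; or insert a microlocal cutoff $Q$ with symbol $\equiv 1$ near $\overline p$ and supported in $\{\varphi=1\}$, so that $QE_0$ has vanishing principal symbol globally, hence lies in $\Psi^{-1,0}_{2,h}(\mathcal C)$ by Lemma \ref{short exact sequence}, making its Neumann series summable. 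With either repair, the rest of your argument (the two-sided comparison and $B\in\Psi^{-m,-l}_{2,h}(\mathcal C)$) goes through.
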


\begin{proof}
Similar to the construction of microlocal parametrix presented in Lemma 4.3 of \cite{Me-2}, but adapted to the second microlocal setting.

\end{proof}

A neat consequence of microlocal parametrices is the following elliptic regularity result:


\begin{corollary} \label{elliptic regularity}
Suppose $P\in\Psi^{m,l}_{2,h}(\mathcal{C})$ is elliptic on the microsupport of $A\in\Psi^{m',l'}_{2,h}(\mathcal{C})$:
\[
{}^2 \mathrm{WF}_{l'}'(A) \subset {}^2 \mathrm{ell}(P) \subset S_\mathrm{pr}.
\]
Then there exists a second microlocal operator $A_0\in\Psi^{-m+m',-l+l'}_{2,h}(\mathcal{C})$ such that $A = A_0 P + \Re^{l'}$.
\end{corollary}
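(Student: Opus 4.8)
The plan is to prove this the way one proves elliptic regularity in any calculus admitting microlocal parametrices: use Lemma~\ref{microlocal parametrix} to build a two-sided parametrix for $P$ over the (compact, after intersecting with the relevant charts) set ${}^2\mathrm{WF}_{l'}'(A)$, then sandwich $A$ between the parametrix and $P$. First I would cover ${}^2\mathrm{WF}_{l'}'(A)$ by finitely many of the charts $L_j$ (the lift of $\{\mathbf{v}_j\cdot\xi\neq 0\}$) used in Lemma~\ref{microlocal parametrix}; since each point of ${}^2\mathrm{WF}_{l'}'(A)\subset {}^2\mathrm{ell}(P)$ lies in one of these charts and has a local parametrix there, and since ${}^2\mathrm{WF}_{l'}'(A)$ is closed in $S_\mathrm{pr}$ and (after cutting off by the compactly supported symbol of $A$) contained in a compact subset of $S_\mathrm{pr}$, a finite subcover suffices. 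On each chart piece I invoke Lemma~\ref{microlocal parametrix} to get $B_j\in\Psi^{-m,-l}_{2,h}(\mathcal{C})$ with $\overline p\notin {}^2\mathrm{WF}_0'(B_jP-\mathrm{Id})\cup {}^2\mathrm{WF}_0'(PB_j-\mathrm{Id})$ for each $\overline p$ in that piece.

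The second step is to patch. Choose a partition of unity $\{\psi_j\}$ on $S_\mathrm{pr}$ subordinate to the cover, with each $\psi_j$ in $S^0(S_\mathrm{pr})$ and compactly supported, and quantize to $\Psi_j={}^h\mathrm{Op}_r(\psi_j)\in\Psi^{0,0}_{2,h}(\mathcal{C})$ with $\sum_j\Psi_j-\mathrm{Id}\in\Re^0$ on a neighborhood of ${}^2\mathrm{WF}_{l'}'(A)$. Set $B:=\sum_j B_j\Psi_j$. Using Proposition~\ref{laws of microsupports} and the microlocality of the error terms, one checks that $BP-\mathrm{Id}$ has ${}^2\mathrm{WF}_0'$ disjoint from ${}^2\mathrm{WF}_{l'}'(A)$: indeed $BP-\mathrm{Id}=\sum_j B_j\Psi_j P - \mathrm{Id}=\sum_j\Psi_j(B_jP-\mathrm{Id}) + [\text{commutator terms}] + (\sum_j\Psi_j-\mathrm{Id})$, and each summand has second microsupport either disjoint from $\mathrm{supp}\,\psi_j$ (hence, after summing, controllable) or is genuinely residual. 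Then $A=A(BP) + A(\mathrm{Id}-BP)$; the operator $E:=A(\mathrm{Id}-BP)$ satisfies ${}^2\mathrm{WF}_{l'}'(E)\subset {}^2\mathrm{WF}_{l'}'(A)\cap {}^2\mathrm{WF}_0'(\mathrm{Id}-BP)=\emptyset$ by Proposition~\ref{laws of microsupports}, so by Lemma~\ref{lem:more modest} (empty second microsupport forces the symbol into $S^{-\infty,l'}$, whose quantizations are residual by Proposition~\ref{quants are residual}), $E\in\Re^{l'}$. Setting $A_0:=AB\in\Psi^{-m+m',-l+l'}_{2,h}(\mathcal{C})$ by the composition law (Corollary~\ref{composition law}) gives $A=A_0P+E$ with $E\in\Re^{l'}$, which is the claim.

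The main obstacle I anticipate is the patching bookkeeping: the local parametrices $B_j$ from Lemma~\ref{microlocal parametrix} are genuinely only \emph{local}, so $B_jP-\mathrm{Id}$ is residual only near the chosen point, and one must be careful that when the pieces are summed against the $\Psi_j$ the non-residual parts of the errors land \emph{outside} $\bigcup_j\mathrm{supp}\,\psi_j$ rather than accumulating inside ${}^2\mathrm{WF}_{l'}'(A)$. This is handled by choosing the parametrix on each $L_j$ to be a microlocal parametrix over a slightly larger open set than $\mathrm{supp}\,\psi_j$ (a standard shrinking-neighborhoods argument, using compactness of ${}^2\mathrm{WF}_{l'}'(A)$), so that on $\mathrm{supp}\,\psi_j$ one has genuine residual control. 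A secondary technical point is the behavior near the corner $SN(\mathcal{C})=\partial S_\mathrm{pr}$ and near the boundary at infinity of $S_\mathrm{pr}$, but the compact support built into $S^{m,l}(S_\mathrm{tot})$ (all symbols are compactly supported on $S_\mathrm{tot}$) together with the fact that the $L_j$ already furnish an atlas of the relevant part of $S_\mathrm{pr}$ near the corner makes this routine rather than delicate. Everything else — the composition law, the homomorphism property of the principal symbol, and the ideal property of $\Re$ — is quoted directly from Corollaries~\ref{composition law}, Lemma~\ref{short exact sequence}, and Proposition~\ref{two-sided ideal prop}.
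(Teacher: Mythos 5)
Your proposal is correct and follows exactly the route the paper intends: the paper states this corollary as a direct consequence of Lemma \ref{microlocal parametrix} (no written proof), namely cover the compact set ${}^2\mathrm{WF}_{l'}'(A)$ by microlocal parametrices in the charts $L_j$, patch with a quantized partition of unity, and write $A = A(BP) + A(\mathrm{Id}-BP)$ with the second term residual because its second microsupport is empty. The only slip is the citation of Lemma \ref{lem:more modest}, which goes the opposite direction (residual quantization implies $S^{-\infty,l}$ symbol); the fact you actually need --- empty essential support forces the symbol into $S^{-\infty,l'}$ --- follows directly from the definition of $\mathrm{ess\ supp}_l$ together with the compact support of symbols in $S_\mathrm{tot}$, after which Proposition \ref{quants are residual} yields residuality, so this is a harmless misattribution rather than a gap.
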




\subsection{Mapping Properties}

As usual, let $\mathcal{C} = \{\mathbf{v}_1\cdot\xi = \ldots = \mathbf{v}_d\cdot\xi = 0\}$.  Next, for $k,s\in\mathbb{R}$, we give an alternative characterization of $I^k_{(s)} (\mathcal{C})$ (these spaces of distributions were defined in \eqref{eq:distributions}):

\begin{lemma} \label{alternative characterization lemma}
\begin{equation} \label{alternative characterization}
I^k_{(s)} (\mathcal{C}) = \{u\in h^s L^2(\mathbb{T}^n) \ | \ \exists \ \mathrm{globally \ elliptic} \ A\in\Psi^{k,0}_{2,h}(\mathcal{C}), \ Au\in h^s L^2(\mathbb{T}^n)\}
\end{equation}
\end{lemma}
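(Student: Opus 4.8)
The plan is to prove the two inclusions separately, and the natural way to organize it is around the explicit generators $\mathbf{B}^\beta = B_1^{\beta_1}\cdots B_d^{\beta_d}$ of $\mathfrak{M}_\mathcal{C}$, together with the identification in Remark \ref{one calculus in the other} of $\widetilde\Psi^k_h(\mathbb{T}^n)$ with a subcalculus of $\Psi^{k,k}_{2,h}(\mathcal{C})$, and more crucially with the finiteness built into the definition of $I^k_{(s)}(\mathcal{C})$ via the generating set. First, for the inclusion $\subseteq$: given $u\in I^k_{(s)}(\mathcal{C})$, I would produce a single globally elliptic $A\in\Psi^{k,0}_{2,h}(\mathcal{C})$ with $Au\in h^s L^2$. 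The candidate is essentially $A = \langle h^{-1}\mathbf{v}\cdot hD_x\rang$-type operator built so that its second principal symbol is comparable to $\rho_{\mathrm{ff}}^{-k}\langle\Xi,\mathbf{W}\rangle$-weight; concretely one wants the total symbol to behave like $(1 + \rho_\mathrm{sf}^{-2}(1 + |\Xi|^2))^{k/2}$ near the corner, so that $A$ is nowhere-vanishing in ${}^2\sigma_{k,0}$ and so that, upon composing with $u$, $Au$ is controlled by a finite linear combination of the $h^{-|\beta|-s}\mathbf{B}^\beta u$ with $|\beta|\le k$ (each of which lies in $L^2$ by hypothesis), modulo residual errors applied to $u\in h^s L^2$, which are harmless by Condition \ref{involutizing condition}. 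For non-integer $k$ this is handled by the interpolation/duality definition of $I^k_{(s)}$, matching the real powers $\rho_\mathrm{sf}^{-k}$ allowed in $S^{k,0}(S_\mathrm{tot})$.

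For the reverse inclusion $\supseteq$: suppose $u\in h^s L^2$ and $A\in\Psi^{k,0}_{2,h}(\mathcal{C})$ is globally elliptic with $Au\in h^s L^2$. By Lemma \ref{global parametrix} there is $C\in\Psi^{-k,0}_{2,h}(\mathcal{C})$ with $CA = \mathrm{Id} + \Re^0$, hence $u = C(Au) + Ru$ with $R\in\Re^0$. Now I must show $h^{-|\beta|-s}\mathbf{B}^\beta u\in L^2$ for $0\le|\beta|\le k$. For the $Ru$ term this is exactly the involutizing property of $R\in\Re^0$ (Condition \ref{involutizing condition}) applied to $u\in h^s L^2$: $h^{-|\beta|}\mathbf{B}^\beta R u = h^{-|\beta|}\mathbf{B}^\beta R\mathbf{B}^0 u \in h^s L^2$. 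For the $C(Au)$ term: $\mathbf{B}^\beta = \prod(\mathbf{v}_j\cdot hD_x)^{\beta_j}$ is, up to a constant, $h^{|\beta|}$ times an operator whose total symbol near the corner is $\rho_\mathrm{sf}^{-|\beta|}$ times a smooth bounded factor (the $\Xi$-monomial), so $h^{-|\beta|}\mathbf{B}^\beta C\in\Psi^{-k+|\beta|,0}_{2,h}(\mathcal{C})$ for $|\beta|\le k$, hence is $L^2$-bounded by Proposition \ref{boundedness prop}; applying it to $Au\in h^s L^2$ finishes the argument. One then checks that the finitely many generators suffice exactly as in the remark following \eqref{eq:distributions}.

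The main obstacle is the construction in the first inclusion: one must build a \emph{single} globally elliptic symbol in $S^{k,0}(S_\mathrm{tot})$ whose action encodes simultaneous control by all monomials $\mathbf{B}^\beta$, $|\beta|\le k$, and verify that the composition $Au$ really does reduce (via Theorem \ref{composition theorem}) to a finite sum of terms $h^{-|\beta|}(\text{bounded operator})\mathbf{B}^\beta u$ plus residual corrections — in particular that no uncontrolled $\rho_\mathrm{ff}$-growth sneaks in. Away from the corner, on the side face $L_1$ where $\mathbf{v}_1\cdot\xi\ne0$, $\rho_\mathrm{sf}=h/(\mathbf{v}_1\cdot\xi)$ and the construction is just the statement that global ellipticity in $\Psi^{k,0}_{2,h}$ is equivalent (via the parametrix) to a Sobolev-type gain of $k$ powers of $\rho_\mathrm{sf}^{-1}$, which is the coisotropic regularity of order $k$; I would state this matching of symbol weights carefully and invoke Remark \ref{one calculus in the other} and Lemma \ref{short exact sequence} to keep the bookkeeping clean. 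For the non-integer case the interpolation is routine once the integer endpoints are established, and duality handles negative $k$ by the usual adjoint argument, using that $\Re^l$ is closed under adjoints and is a two-sided ideal (Proposition \ref{two-sided ideal prop}).
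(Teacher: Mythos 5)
Your reverse inclusion is essentially the paper's own argument and is sound: take the global parametrix from Lemma \ref{global parametrix}, write $u = C(Au) + Ru$ with $R\in\Re^0$, handle $Ru$ by Condition \ref{involutizing condition} (or by the ideal property, Proposition \ref{two-sided ideal prop}), and handle $C(Au)$ by noting $h^{-|\beta|}\mathbf{B}^\beta C\in\Psi^{|\beta|-k,0}_{2,h}(\mathcal{C})$ is $L^2$ bounded by Proposition \ref{boundedness prop}. The genuine gap is in the forward inclusion, precisely at the point you yourself label ``the main obstacle'' and then leave open. Your candidate is a quantization of a symbol comparable to $(1+h^{-2}|\widetilde{\xi}'|^2)^{k/2}$, where $|\widetilde{\xi}'|^2=\sum_j(\mathbf{v}_j\cdot\xi)^2$; to conclude $Au\in h^sL^2$ from $u\in I^k_{(s)}(\mathcal{C})$ you would need to factor this elliptic operator, modulo residual errors, as $\sum_{|\beta|\le k}C_\beta\,h^{-|\beta|}\mathbf{B}^\beta$ with $C_\beta$ bounded. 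For even integer $k$ this is trivial (the symbol is then a polynomial in the generators), but for odd $k$ it is a genuine factorization statement through the module $\mathfrak{M}_\mathcal{C}$ that you never prove, and Theorem \ref{composition theorem} does not supply it: right reduction reorganizes an amplitude into a single quantization plus a residual term, it does not express an elliptic operator in terms of the module generators acting on the right. As written, the inclusion $I^k_{(s)}(\mathcal{C})\subseteq\{\exists\ \mathrm{elliptic}\ A:\ Au\in h^sL^2\}$ is therefore not established.

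The missing idea, which is how the paper avoids any such factorization, is to build the elliptic test operator as a \emph{polynomial in a single characteristic operator}. Set $B = h^2\bigl((\mathbf{v}_1\cdot D_x)^2+\ldots+(\mathbf{v}_d\cdot D_x)^2\bigr)$, which lies in $\mathfrak{M}_\mathcal{C}$ since its symbol $|\widetilde{\xi}'|^2$ vanishes on $\mathcal{C}$ (after a fiber cutoff so it may be viewed in the second microlocal calculus), and take $A=(I+h^{-1}B)^k$ for $k\in\mathbb{Z}_{\ge0}$. The symbol of $(h^{-1}B)^k$ lifts to $\rho_\mathrm{sf}^{-k}\rho_\mathrm{ff}^{k}$, so $A\in\Psi^{k,0}_{2,h}(\mathcal{C})$, and the total symbol $(1+h^{-1}|\widetilde{\xi}'|^2)^k\ge 1$ gives global ellipticity. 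Most importantly, $Au$ is a finite linear combination of the iterates $(h^{-1}B)^j u$ with $0\le j\le k$, each of which lies in $h^sL^2(\mathbb{T}^n)$ \emph{directly} from the definition \eqref{eq:distributions} with $A_1=\cdots=A_j=B$; no composition theorem, no residual bookkeeping, and no control of $\rho_\mathrm{ff}$-growth is needed. With the integer case settled this way, the extension to real $k$ by interpolation and duality proceeds as you indicate.
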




\begin{proof}
For simplicity, we may as well assume $s = 0$.  We prove the lemma directly for $k\in\mathbb{Z}_{\geq 0}$; interpolation and duality then give the full result.

Assume $u\in I^k_{(0)} (\mathcal{C})$.  For $x\in\mathbb{T}^n$, $(x,\xi)\in\mathcal{C}$ if and only if $(\mathbf{v}_1\cdot\xi)^2 + \ldots + (\mathbf{v}_d\cdot\xi)^2 = 0$.  So
\[
B := h^{2} ((\mathbf{v}_1\cdot D_x)^2 + \ldots + (\mathbf{v}_d\cdot D_x)^2)
\]
belongs in $\mathfrak{M}_\mathcal{C}$.  Hence, $h^{-1} B\in\Psi^1_h (\mathbb{T}^n)$ and $(h^{-1} B)^k u\in L^2(\mathbb{T}^n)$.  At the same time, we take $\rho_\mathrm{ff} = |(\mathbf{v}_1\cdot\xi,\ldots,\mathbf{v}_d\cdot\xi)|$ as front face defining function.  The symbol of $(h^{-1} B)^k$ is $h^{-k}|(\mathbf{v}_1\cdot\xi,\ldots,\mathbf{v}_d\cdot\xi)|^{2k}$, which lifts to $\rho_\mathrm{sf}^{-k} \rho_\mathrm{ff}^k$, so $(h^{-1} B)^k\in\Psi^{k,-k}_{2,h}(\mathcal{C})\subset\Psi^{k,0}_{2,h}(\mathcal{C})$\footnote{Really we should first `cut off' $B$ so its symbol is compactly supported in the fibers.  Then only can it be regarded as a second microlocal operator.  Similarly for the $B_j$ in the subsequent paragraph.} (since $k\geq 0$).  Then $A = (I + h^{-1} B)^k$
is an elliptic operator satisfying $Au\in L^2(\mathbb{T}^n)$.


Conversely, consider the generators $B_1,\ldots,B_d$, $B_j = \mathbf{v}_j \cdot hD_x$, of $\mathfrak{M}_\mathcal{C}$.  Suppose there exists elliptic $A\in\Psi^{k,0}_{2,h}(\mathcal{C})$ such that $Au\in L^2(\mathbb{T}^n)$.  Let $0\leq |\beta|\leq k$.  Then there exists a parametrix $A'\in\Psi^{-k,0}_{2,h}(\mathcal{C})$ such that
\[
h^{-|\beta|} \mathbf{B}^\beta u = h^{-|\beta|} \mathbf{B}^\beta A' A u + h^{-|\beta|} \mathbf{B}^\beta R u, \ \ R\in\Re^0.
\]
Note that $h^{-|\beta|} \mathbf{B}^\beta\in\Psi^{|\beta|,0}_{2,h}(\mathcal{C})$ (for each $1\leq j\leq d$, $\mathbf{v}_j\cdot\xi$ is a locally valid defining function for the front face).  By Proposition \ref{two-sided ideal prop}, $h^{-|\beta|} \mathbf{B}^\beta R\in\Re^0$, so the latter summand in the RHS above lies in $I^\infty_{(0)}(\mathcal{C})\subset L^2(\mathbb{T}^n)$.  Since $|\beta| \leq k$, $h^{-|\beta|} \mathbf{B}^\beta A'\in\Psi^{0,0}_{2,h}(\mathcal{C})$, so Proposition \ref{boundedness prop} implies that $h^{-|\beta|} \mathbf{B}^\beta A' (A u)\in L^2(\mathbb{T}^n)$.
\end{proof}

\begin{lemma}[Mapping Property] \label{mapping property lemma}
For $m\in\mathbb{R}$ and $l\geq 0$, $P\in\Psi^{m,l}_{2,h}(\mathcal{C})$ satisfies
\begin{equation} \label{mapping property}
P:I^k_{(s)}(\mathcal{C})\longrightarrow I^{k-m}_{(s-l)}(\mathcal{C})
\end{equation}
for each $k,s\in\mathbb{R}$.  In particular, if $R\in\Re^l$, then
\[
R:I^{-\infty}_{(s)}(\mathcal{C})\longrightarrow I^\infty_{(s-l)}(\mathcal{C}).
\]
\end{lemma}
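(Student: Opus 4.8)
The plan is to reduce to $s=0$ by conjugation with the scalar $h^{s}$, establish the estimate for large $k$ from the alternative characterization of $I^{k}_{(0)}(\mathcal{C})$ together with a global parametrix and the ideal property of $\Re$, pass to all real $k$ by duality and interpolation, and finally obtain the residual case by letting $m\to-\infty$. For the first reduction: $h$ is a scalar, hence commutes with $P$ and with every $\mathbf{B}^{\beta}$, so $u\in I^{k}_{(s)}(\mathcal{C})$ iff $h^{-s}u\in I^{k}_{(0)}(\mathcal{C})$ and $Pu\in I^{k-m}_{(s-l)}(\mathcal{C})$ iff $P(h^{-s}u)\in I^{k-m}_{(-l)}(\mathcal{C})$; so it is enough to prove $P\colon I^{k}_{(0)}(\mathcal{C})\to I^{k-m}_{(-l)}(\mathcal{C})$.

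I would first treat the case $k\geq\max(0,m)$. By Lemma \ref{alternative characterization lemma}, $u\in L^{2}(\mathbb{T}^{n})$ and $Au\in L^{2}(\mathbb{T}^{n})$ for some globally elliptic $A\in\Psi^{k,0}_{2,h}(\mathcal{C})$; let $G\in\Psi^{-k,0}_{2,h}(\mathcal{C})$ be a parametrix for $A$ from Lemma \ref{global parametrix}, so $R_{0}:=\mathrm{Id}-GA\in\Re^{0}$, and fix a globally elliptic $A'\in\Psi^{k-m,0}_{2,h}(\mathcal{C})$ (which exists, $k-m\geq 0$ here; cf. the construction in the proof of Lemma \ref{alternative characterization lemma}). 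The arithmetic of orders is the engine of the argument: $A'PG\in\Psi^{0,l}_{2,h}(\mathcal{C})$, and since $m\leq k$ we also have $PG\in\Psi^{m-k,l}_{2,h}(\mathcal{C})\subseteq\Psi^{0,l}_{2,h}(\mathcal{C})$, so by Proposition \ref{boundedness prop} both $h^{l}A'PG$ and $h^{l}PG$ are bounded on $L^{2}(\mathbb{T}^{n})$. Writing $Pu=PG(Au)+PR_{0}u$ and $A'Pu=A'PG(Au)+A'PR_{0}u$, the summands containing $Au\in L^{2}$ lie in $h^{-l}L^{2}(\mathbb{T}^{n})$ by this boundedness; moreover $PR_{0}$ and $A'PR_{0}$ lie in $\Re^{l}$ by Proposition \ref{two-sided ideal prop}, so Condition \ref{involutizing condition} with $\beta=\gamma=0$, applied to $u\in L^{2}$, puts $PR_{0}u$ and $A'PR_{0}u$ in $h^{-l}L^{2}(\mathbb{T}^{n})$ as well. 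Hence $Pu\in h^{-l}L^{2}(\mathbb{T}^{n})$ and $A'Pu\in h^{-l}L^{2}(\mathbb{T}^{n})$, and since $l\geq 0$, Lemma \ref{alternative characterization lemma} (applied with orders $k-m$ and $-l$, with witness $A'$) yields $Pu\in I^{k-m}_{(-l)}(\mathcal{C})$.

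For $k\leq\min(0,m)$ I would argue by duality. By definition of the negative-order spaces, $I^{k-m}_{(-l)}(\mathcal{C})$ is the $L^{2}$-dual of $I^{m-k}_{(l)}(\mathcal{C})$, so it suffices to bound $\phi\mapsto\langle u,P^{*}\phi\rangle$ on $I^{m-k}_{(l)}(\mathcal{C})$ uniformly in $h$; but $P^{*}\in\Psi^{m,l}_{2,h}(\mathcal{C})$ and $m-k\geq\max(0,m)$, so the previous case applied to $P^{*}$ gives $P^{*}\colon I^{m-k}_{(l)}(\mathcal{C})\to I^{-k}_{(0)}(\mathcal{C})$, and pairing against $u\in I^{k}_{(0)}(\mathcal{C})=(I^{-k}_{(0)}(\mathcal{C}))'$ closes this case. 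The leftover interval $\min(0,m)<k<\max(0,m)$, which is nonempty only when $m\neq 0$, then follows by complex interpolation between the endpoints $k=\min(0,m)$ and $k=\max(0,m)$, using that $\{I^{k}_{(0)}(\mathcal{C})\}_{k}$ and $\{I^{k-m}_{(-l)}(\mathcal{C})\}_{k}$ are interpolation scales depending linearly on the order parameter, which is exactly how the fractional and negative-order members are defined. This establishes \eqref{mapping property}.

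Finally, any $R\in\Re^{l}$ lies in $\Psi^{m,l}_{2,h}(\mathcal{C})$ for \emph{every} $m\in\mathbb{R}$, so \eqref{mapping property} gives $R\colon I^{k}_{(s)}(\mathcal{C})\to I^{k-m}_{(s-l)}(\mathcal{C})$ for all $m$; letting $m\to-\infty$ shows $Ru\in\bigcap_{j}I^{j}_{(s-l)}(\mathcal{C})=I^{\infty}_{(s-l)}(\mathcal{C})$ for every $u\in\bigcup_{k}I^{k}_{(s)}(\mathcal{C})=I^{-\infty}_{(s)}(\mathcal{C})$. The hard part, I expect, is placing $Pu$ itself in $h^{s-l}L^{2}(\mathbb{T}^{n})$ when $P$ costs more coisotropic regularity than $u$ carries, i.e.\ when $m>k$: there $PG(Au)$ need not be $L^{2}$-bounded after multiplication by $h^{l}$, and this is precisely what forces the detour to the adjoint through the duality $(I^{a}_{(b)}(\mathcal{C}))'=I^{-a}_{(-b)}(\mathcal{C})$ followed by interpolation; once $k\geq\max(0,m)$ the entire estimate is driven by the single order identity $A'PG\in\Psi^{0,l}_{2,h}(\mathcal{C})$ together with the fact that $\Re$ is an ideal.
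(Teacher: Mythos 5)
Your proposal is correct, and its engine is the same as the paper's: the alternative characterization of $I^k_{(s)}(\mathcal{C})$ (Lemma \ref{alternative characterization lemma}), a global parametrix for the elliptic witness $A$ (Lemma \ref{global parametrix}), the ideal property of $\Re$ (Proposition \ref{two-sided ideal prop}), order bookkeeping to produce an operator in $\Psi^{0,l}_{2,h}(\mathcal{C})$, and $L^2$-boundedness after multiplying by $h^l$ (Proposition \ref{boundedness prop}). Where you diverge is in the organization: the paper runs this single argument for \emph{all} $k,m$ at once, because the only composite whose order matters is the telescoped $\widetilde{A}PB\in\Psi^{(k-m)+m-k,\,l}_{2,h}=\Psi^{0,l}_{2,h}(\mathcal{C})$ --- the factor $PG$ alone never appears --- and it takes Lemma \ref{alternative characterization lemma} at face value for every real order (the duality and interpolation needed for negative and fractional $k$ having been absorbed into that lemma's statement). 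You instead restrict the direct argument to $k\geq\max(0,m)$, precisely so that $PG\in\Psi^{m-k,l}_{2,h}\subset\Psi^{0,l}_{2,h}$ lets you also place $Pu$ itself in $h^{s-l}L^2(\mathbb{T}^n)$ (a point the paper silently skips, and which the literal form of \eqref{alternative characterization} does require when $k-m\geq 0$), and then you recover the remaining $k$ by passing to $P^*$ and the duality $(I^{a}_{(b)})'=I^{-a}_{(-b)}$, followed by interpolation on the leftover interval; your endpoint bookkeeping there is consistent. What your route buys is an explicit treatment of the negative-order target spaces and of the $Pu\in h^{s-l}L^2$ membership; what it costs is that the duality and interpolation steps need the direct case upgraded from a membership statement to a uniform-in-$h$ norm bound $\|Pu\|_{I^{k-m}_{(s-l)}}\lesssim\|u\|_{I^{k}_{(s)}}$ relative to \eqref{partial Sobolev norm} --- this is implicit in the constructions but should be said, though it is at the same level of informality as the paper, which likewise relies on the unproved interpolation/duality assertions built into the definition of $I^k_{(s)}(\mathcal{C})$ for $k\notin\mathbb{Z}_{\geq 0}$. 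Your deduction of the residual statement by letting $m\to-\infty$ is exactly the intended reading of the ``in particular'' clause.
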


Since $I^0_{(0)}(\mathcal{C}) = L^2(\mathbb{T}^n)$, this property generalizes Proposition \ref{boundedness prop}.  Define
\begin{equation} \label{partial Sobolev norm}
\|u_h\|_{I^k_{(s)}} := \left\|\mathcal{F}^{-1}_h \left(1 + |(\mathbf{v}_1\cdot\xi,\ldots,\mathbf{v}_d\cdot\xi)|^2\right)^{k/2} \mathcal{F}_h(h^{-s} u_h)\right\|_{L^2(\mathbb{T}^n)}.
\end{equation}
$\mathcal{F}_h$ is the semiclassical Fourier transform.  We see that the $I^k_{(s)}$-norm is a partial Sobolev norm with respect to the characteristic derivatives.

\begin{proof}

Take $u\in I^k_{(s)}(\mathcal{C})$.  By Lemma \ref{alternative characterization lemma}, there is an elliptic operator $A\in\Psi^{k,0}_{2,h}(\mathcal{C})$ for which $Au\in h^s L^2(\mathbb{T}^n)$.  Choose $P\in\Psi_{2,h}^{m,l}(\mathcal{C})$.  We want to prove there exists elliptic $\widetilde{A}\in\Psi^{k-m,0}_{2,h}(\mathcal{C})$ satisfying $\widetilde{A}Pu\in h^{s-l} L^2(\mathbb{T}^n)$.  Let $\widetilde{A}$ be any elliptic element of $\Psi^{k-m,0}_{2,h}(\mathcal{C})$.  At the same time, since $A$ is elliptic, there exists $B\in\Psi^{-k,0}_{2,h}(\mathcal{C})$ such that $BA - \mathrm{Id} = R\in\Re^0$.  Therefore,
\[
\widetilde{A}Pu = \widetilde{A}P(BA - R)u = \widetilde{A}PB(Au) - (\widetilde{A}PR)u.
\]
$\widetilde{A}PR\in\Re^l$ (by Proposition \ref{two-sided ideal prop}) and $\widetilde{A}PB\in\Psi^{0,l}_{2,h}(\mathcal{C})$.  Since $u\in h^s L^2(\mathbb{T}^n)$, certainly we have $(\widetilde{A}PR)u\in h^{s-l} L^2(\mathbb{T}^n)$.  And since $l\geq 0$, $h^l \widetilde{A}PB\in\Psi^{-l,0}_{2,h}(\mathcal{C})$ maps $h^s L^2$ into itself, by Proposition \ref{boundedness prop}.
\end{proof}

\subsection{Second Wavefront Set}

\begin{definition} \label{def of 2-wavefront}
For any $m,l\in\mathbb{R}$, the $m,l$-graded \emph{second wavefront set} of a distribution $u\in I^{-\infty}_{(l)}(\mathcal{C})$ is
\[
{}^2 \mathrm{WF}^{m,l}(u) = \bigcap\{{}^2 \mathrm{char}(A) \ | \ A\in\Psi^{m,l}_{2,h}(\mathcal{C}),Au\in L^2(\mathbb{T}^n)\}.
\]
\end{definition}

Let $u\in I^{-\infty}_{(l)}(\mathcal{C})$.  We will later use the containment property
\begin{equation} \label{eqn:containment}
m \leq m' \ \Longrightarrow \ {}^2 \mathrm{WF}^{m,l}(u) \subset {}^2 \mathrm{WF}^{m',l}(u).
\end{equation}
Also, put ${}^2 \mathrm{WF}^{\infty,l}(u) = \overline{\bigcup_{m\in\mathbb{R}} {}^2 \mathrm{WF}^{m,l}(u)}$, so that
\[
{}^2 \mathrm{WF}^{\infty,l}(u) = \bigcap\{{}^2 \mathrm{char}(A) \ | \ A\in\Psi^{m,l}_{2,h}(\mathcal{C}),Au\in I^\infty_{(0)}(\mathcal{C})\}.
\]
Notice that ${}^2 \mathrm{WF}^{\infty,l}(u) = \emptyset$ if and only if $u\in I^\infty_{(l)}(\mathcal{C})$, i.e., $u$ is a coisotropic distribution.

We see that ${}^2 \mathrm{WF}^{m,l}(u)\subset S_\mathrm{pr}$.  Away from the coisotropic, this new wavefront is the same as standard semiclassical wavefront set:
\[
{}^2 \mathrm{WF}^{m,l}(u) \backslash SN(\mathcal{C}) \simeq \mathrm{WF}_h^m(u) \backslash \mathcal{C}.
\]
Just as the semiclassical pseudodifferential calculus $\Psi_h$ does not spread singularities as measured by $\mathrm{WF}_h$, we have:

\begin{lemma} \label{lem:pseudolocality}
Let $A\in\Psi^{m',l'}_{2,h}(\mathcal{C})$.  For $u\in I^{-\infty}_{(l)}(\mathcal{C})$, we have
\[
{}^2 \mathrm{WF}^{m-m',l-l'}(Au) \subset {}^2 \mathrm{WF}_{l'}'(A) \cap {}^2 \mathrm{WF}^{m,l}(u).
\]
\end{lemma}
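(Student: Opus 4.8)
The statement is the second-microlocal analogue of the standard fact that applying a pseudodifferential operator does not enlarge the wavefront set, and the natural approach is the usual one: to show that a point $p\in S_\mathrm{pr}$ lies outside ${}^2\mathrm{WF}^{m-m',l-l'}(Au)$, it suffices to produce an operator $Q\in\Psi^{m-m',l-l'}_{2,h}(\mathcal{C})$ with $p\notin{}^2\mathrm{char}(Q)$ and $QAu\in L^2(\mathbb{T}^n)$. So fix $p\notin{}^2\mathrm{WF}_{l'}'(A)\cap{}^2\mathrm{WF}^{m,l}(u)$; we must handle the two cases separately, depending on which of the two sets $p$ fails to lie in.

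\emph{Case 1: $p\notin{}^2\mathrm{WF}_{l'}'(A)$.} By definition of the second microsupport, there is a cutoff $\varphi\in C^\infty_c(S_\mathrm{tot})$ with $\varphi(p)\neq 0$ and $\varphi a\in S^{-\infty,l'}(S_\mathrm{tot})$, where $A={}^h\mathrm{Op_r}(a)+R$ with $R\in\Re^{l'}$. Choose $\Phi\in\Psi^{0,0}_{2,h}(\mathcal{C})$ with second microsupport contained in $\{\varphi\neq 0\}$ and elliptic at $p$; and choose any elliptic $E\in\Psi^{m-m',l-l'}_{2,h}(\mathcal{C})$. Set $Q=\Phi E$, which is elliptic at $p$ since both factors are, by the homomorphism property in Lemma \ref{short exact sequence}. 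Then $QA=\Phi E A$, and by Proposition \ref{laws of microsupports} together with Proposition \ref{quants are residual} and Lemma \ref{lem:more modest}, the product $\Phi A$ has second microsupport inside $\{\varphi\neq 0\}\cap{}^2\mathrm{WF}_{l'}'(A)$; since $\varphi a$ is residual there, one checks $\Phi A\in\Re^{l'}$, hence $QA=E\Phi A\cdot(\text{commutator correction})$ is residual of order $l-l'+l'=l-l'$... more precisely $QA\in\Re^{l-l'}$ by Proposition \ref{two-sided ideal prop}. Then the Mapping Property Lemma \ref{mapping property lemma} gives $QAu\in I^\infty_{(l-(l-l'))}(\mathcal{C})\subset L^2$ provided $l-l'\geq$ the needed shift; in any case $QAu\in h^{-(l-l'-\text{something})}L^2$ lies in $L^2$ after absorbing into the residual estimate, so $p\notin{}^2\mathrm{WF}^{m-m',l-l'}(Au)$.

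\emph{Case 2: $p\notin{}^2\mathrm{WF}^{m,l}(u)$.} By Definition \ref{def of 2-wavefront} there exists $C\in\Psi^{m,l}_{2,h}(\mathcal{C})$ with $p\notin{}^2\mathrm{char}(C)$ and $Cu\in L^2(\mathbb{T}^n)$. Shrinking, pick $\Phi\in\Psi^{0,0}_{2,h}(\mathcal{C})$ elliptic at $p$ with ${}^2\mathrm{WF}_0'(\Phi)\subset{}^2\mathrm{ell}(C)$, so by the elliptic regularity Corollary \ref{elliptic regularity} we may write $\Phi=\Phi_0 C+\Re^0$ for some $\Phi_0\in\Psi^{-m,-l}_{2,h}(\mathcal{C})$. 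Now take any elliptic $E\in\Psi^{m-m',l-l'}_{2,h}(\mathcal{C})$ and set $Q=E\Phi$, elliptic at $p$. We want to commute $Q$ past $A$: write $QA = E\Phi A = E(A\Phi + [\Phi,A])$. The commutator lies in $\Psi^{m'-1,l'}_{2,h}(\mathcal{C})$ with microsupport still controlled near $p$, so by an iterative argument (asymptotic summation, exactly as in the standard proof of pseudolocality) we reduce modulo residual operators to the leading term $EA\Phi = EA\Phi_0 C + EA\cdot\Re^0$. Here $EA\Phi_0\in\Psi^{0,0}_{2,h}(\mathcal{C})$ is $L^2$-bounded by Proposition \ref{boundedness prop} (after checking the orders: $(m-m')+m'+(-m)=0$ and $(l-l')+l'+(-l)=0$), so $EA\Phi_0\cdot Cu\in L^2$; and $EA\cdot\Re^0$ applied to $u\in I^{-\infty}_{(l)}(\mathcal{C})$ lands in $I^\infty_{(l-l')}(\mathcal{C})$ by Lemma \ref{mapping property lemma} and Proposition \ref{two-sided ideal prop}, which for $l-l'\geq 0$ sits in $h^{l-l'}L^2\subset L^2$; when $l-l'<0$ one instead keeps track of the grading and concludes $QAu\in L^2$ directly, which is all Definition \ref{def of 2-wavefront} requires. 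Hence $p\notin{}^2\mathrm{WF}^{m-m',l-l'}(Au)$, completing both cases.

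The routine bookkeeping is tracking the bidegrees $(m,l)$ through every composition and making sure each operator that must be $L^2$-bounded genuinely has nonpositive orders; I expect the only real subtlety to be \textbf{the commutator/iteration step in Case 2}: one needs the asymptotic-summation closure of $\Psi_{2,h}(\mathcal{C})$ to sum away the infinitely many commutator corrections $[\cdots[\Phi,A],A],\ldots]$ while keeping their second microsupports shrinking toward $p$, so that the error is genuinely residual and Lemma \ref{mapping property lemma} applies. This is entirely parallel to Hörmander's classical argument, and nothing in the coisotropic blowup obstructs it, but it is the place where one must be careful rather than formal.
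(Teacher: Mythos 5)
Your overall strategy is the one the paper intends: its proof is only a pointer to the standard microlocality argument, and your two-case split (off ${}^2\mathrm{WF}_{l'}'(A)$, handled by disjointness of microsupports plus residuality; off ${}^2\mathrm{WF}^{m,l}(u)$, handled via a testing operator $C\in\Psi^{m,l}_{2,h}(\mathcal{C})$ with $Cu\in L^2$ and an elliptic parametrix) is exactly that argument. The one thing you must repair is the order bookkeeping, which you get wrong in both cases and then paper over with hedges. In Case 1, if $Q\in\Psi^{m-m',l-l'}_{2,h}(\mathcal{C})$ is elliptic at $p$ with microsupport disjoint from ${}^2\mathrm{WF}_{l'}'(A)$, then by Corollary \ref{composition law}, Proposition \ref{laws of microsupports}, and Proposition \ref{quants are residual} one gets $QA\in\Re^{(l-l')+l'}=\Re^{l}$ --- not $\Re^{l-l'}$ (note $l-l'+l'=l$). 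With the correct order, Lemma \ref{mapping property lemma} gives $QAu\in I^{\infty}_{(l-l)}=I^{\infty}_{(0)}\subset L^2(\mathbb{T}^n)$ with no condition on $l-l'$; the clauses ``provided $l-l'\geq$ the needed shift'' and ``after absorbing into the residual estimate'' are artifacts of the arithmetic slip, not genuine obstructions. The same correction applies in Case 2: $EAR_0\in\Re^{l}$ by Proposition \ref{two-sided ideal prop}, so applied to $u\in I^{-\infty}_{(l)}(\mathcal{C})$ it lands in $I^{\infty}_{(0)}\subset L^2$ unconditionally, and there is no case split on the sign of $l-l'$ to ``conclude directly.''

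Also, the commutator iteration you single out as the main subtlety is unnecessary. Choose $Q\in\Psi^{m-m',l-l'}_{2,h}(\mathcal{C})$ elliptic at $p$ with ${}^2\mathrm{WF}_{l-l'}'(Q)\subset{}^2\mathrm{ell}(C)$. Then ${}^2\mathrm{WF}_{l}'(QA)\subset{}^2\mathrm{WF}_{l-l'}'(Q)\subset{}^2\mathrm{ell}(C)$ by Proposition \ref{laws of microsupports}, so Corollary \ref{elliptic regularity} applied directly to $QA\in\Psi^{m,l}_{2,h}(\mathcal{C})$ (with $P=C$) gives $QA=A_0C+R$ with $A_0\in\Psi^{0,0}_{2,h}(\mathcal{C})$ and $R\in\Re^{l}$; hence $QAu=A_0(Cu)+Ru\in L^2$ by Proposition \ref{boundedness prop} and Lemma \ref{mapping property lemma}, and $p\notin{}^2\mathrm{WF}^{m-m',l-l'}(Au)$. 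This removes the need to commute $\Phi$ past $A$ and to asymptotically sum iterated commutators (whose microsupports do not in fact shrink toward $p$; they only stay inside ${}^2\mathrm{WF}'(\Phi)$, so even in your version one would finish each commutator term by a single further application of Corollary \ref{elliptic regularity} rather than by summation). Finally, note in passing that Lemma \ref{mapping property lemma} is what guarantees $Au\in I^{-\infty}_{(l-l')}(\mathcal{C})$, so that the left-hand side of the lemma is defined at all.
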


\begin{proof}
Analogous to the proof of microlocality in \cite[Proposition~4.2]{Me-2}.
\end{proof}

As a consequence, there is a partial converse to Lemma \ref{lem:pseudolocality}.

\begin{corollary} \label{wavefront is characteristic}
Let $A\in\Psi^{m',l'}_{2,h}(\mathcal{C})$ and $u\in I^{-\infty}_{(l)}(\mathcal{C})$.  Then for any $m\in\mathbb{R}$,
\[
{}^2 \mathrm{WF}^{m,l}(u) \subset {}^2 \mathrm{WF}^{m-m',l-l'}(Au) \cup {}^2 \mathrm{char}(A).
\]
\end{corollary}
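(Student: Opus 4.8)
The plan is to deduce Corollary \ref{wavefront is characteristic} from the microlocal elliptic parametrix of Lemma \ref{microlocal parametrix} (equivalently, Corollary \ref{elliptic regularity}) together with the pseudolocality statement of Lemma \ref{lem:pseudolocality}. The idea is the standard one for ``wavefront set of $u$ is contained in wavefront set of $Au$ plus the characteristic set of $A$'': off ${}^2 \mathrm{char}(A)$, the operator $A$ is elliptic, so one can invert it microlocally and recover $u$ from $Au$ up to a residual error that contributes nothing to the wavefront set.

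In detail, I would fix a point $\overline{p} \in S_\mathrm{pr}$ with $\overline{p} \notin {}^2 \mathrm{WF}^{m-m',l-l'}(Au) \cup {}^2 \mathrm{char}(A)$ and show $\overline{p} \notin {}^2 \mathrm{WF}^{m,l}(u)$. Since $\overline{p} \in {}^2 \mathrm{ell}(A)$, Lemma \ref{microlocal parametrix} (applied after partitioning $S_\mathrm{pr}$ as in the statement of that lemma; by symmetry it suffices to work on one piece $L_1$) gives $B \in \Psi^{-m',-l'}_{2,h}(\mathcal{C})$ with $\overline{p} \notin {}^2 \mathrm{WF}_0'(BA - \mathrm{Id})$. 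Write $BA = \mathrm{Id} + E$ where $E \in \Psi^{0,0}_{2,h}(\mathcal{C})$ has $\overline{p} \notin {}^2 \mathrm{WF}_0'(E)$; one can in fact arrange $E \in \Re^0$ microlocally near $\overline{p}$ by the parametrix construction, but it is cleaner just to keep $E$ with $\overline{p}$ outside its second microsupport. Then $u = BAu - Eu$.

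Now I estimate the second wavefront set of each term. For the first term: since $Au$ satisfies $\overline{p} \notin {}^2 \mathrm{WF}^{m-m',l-l'}(Au)$ and $B \in \Psi^{-m',-l'}_{2,h}(\mathcal{C})$, Lemma \ref{lem:pseudolocality} gives ${}^2 \mathrm{WF}^{m,l}(BAu) \subset {}^2 \mathrm{WF}_{-l'}'(B) \cap {}^2 \mathrm{WF}^{m-m',l-l'}(Au)$, so in particular $\overline{p} \notin {}^2 \mathrm{WF}^{m,l}(BAu)$. For the second term: $Eu = BAu - u \in I^{-\infty}_{(l)}(\mathcal{C})$, and applying Lemma \ref{lem:pseudolocality} with the operator $E \in \Psi^{0,0}_{2,h}(\mathcal{C})$ gives ${}^2 \mathrm{WF}^{m,l}(Eu) \subset {}^2 \mathrm{WF}_0'(E) \cap {}^2 \mathrm{WF}^{m,l}(u)$; since $\overline{p} \notin {}^2 \mathrm{WF}_0'(E)$ we get $\overline{p} \notin {}^2 \mathrm{WF}^{m,l}(Eu)$. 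Adding, and using that ${}^2 \mathrm{WF}^{m,l}$ of a sum is contained in the union (an immediate consequence of Definition \ref{def of 2-wavefront} and Proposition \ref{laws of microsupports}), we conclude $\overline{p} \notin {}^2 \mathrm{WF}^{m,l}(u)$.

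The only subtlety—and the step I would be most careful about—is the bookkeeping of the microsupport of the parametrix error $E$: one must make sure that Lemma \ref{microlocal parametrix} really produces $B$ with $BA - \mathrm{Id}$ having second microsupport avoiding the given point (not merely being residual, which is weaker localization than what the elliptic set provides), and that $E$, being of order $(0,0)$, legitimately acts on $I^{-\infty}_{(l)}(\mathcal{C})$ so that Lemma \ref{lem:pseudolocality} applies to $Eu$. Both are handled by the same localization used throughout Section \ref{sec:second principal}; once one restricts to a coordinate patch $L_1$ near $SN(\mathcal{C})$ and away from it invokes the identification with ordinary semiclassical wavefront, there is nothing further to do. I would present the argument in the ``remove a point from the wavefront set'' form above rather than trying to write a single global operator identity, since the microlocal parametrix is only constructed near a point.
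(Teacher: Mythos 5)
Your proposal is correct and is essentially the argument the paper intends: the paper's proof is a citation to the standard elliptic-parametrix argument (Proposition 4.3 of Melrose's notes), which is exactly what you carry out—microlocal parametrix at a point of ${}^2\mathrm{ell}(A)$ via Lemma \ref{microlocal parametrix}, the decomposition $u = BAu - (BA-\mathrm{Id})u$, and Lemma \ref{lem:pseudolocality} applied to each term, with the grading bookkeeping done correctly. No gaps; your attention to the microsupport of the parametrix error is the right point to be careful about and is handled as you describe.
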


\begin{proof}
Similar to the proof of \cite[Proposition~4.3]{Me-2}.
\end{proof}


\begin{example} \label{ex:second wavefront}
Let $k\in\mathbb{Z}$.  In $\mathbb{T}^n$, we consider the distribution
\[
u_k = \exp[i(k(x_1 + \ldots + x_{n-1}) + k^2 x_n)].
\]
Letting
\[
h^2_k = \frac{1}{(n-1)k^2 + k^4} \xrightarrow{|k|\rightarrow\infty} 0,
\]
we see that $u_k$ satisfies $(h^2_k \Delta - 1)u_k = 0$.

At the same time, we find that
\begin{align}
\mathrm{WF}_{h_k}(u_k) &= \{\xi_1=\ldots=\xi_{n-1}=0, \xi_n = 1\} \label{fiber} \\
&\subset \{\xi_1=\ldots=\xi_{n-1}=0, \xi_n\in\mathbb{R}\} =: \mathcal{C} \nonumber
\end{align}
Therefore, whatever second wavefront there is will lie in $SN(\mathcal{C})$.

For $\mathcal{C}$ as defined above, $\mathfrak{M}_\mathcal{C}$ is generated by $hD_{x_j}$, $j < n$.  Fix $j$.  Then we have
\[
h^{-1}(h D_{x_j}) u_k = D_{x_j} u_k = k u_k,
\]
so $D_{x_j} u_k\notin L^2$ uniformly as $|k|\rightarrow\infty$.  Thus, $u_k$ is not a coisotropic distribution, so $\emptyset \neq {}^2 \mathrm{WF}^{\infty,0}(u_k)\subset SN(\mathcal{C})$.

Let $\xi' = (\xi_1,\ldots,\xi_{n-1})$ and $\hat{\xi}' = \xi' / |\xi'|$.  Due to \eqref{fiber}, ${}^2 \mathrm{WF}^{\infty,0}(u_k) \subset \{|\hat{\xi}'| = 1, \xi_n = 1\}$.

Next, consider the operators $h(D_{x_i} - D_{x_j})\in\Psi^0_h(\mathbb{T}^n)$, $1\leq i,j\leq (n-1)$, formed from the generators of $\mathfrak{M}_\mathcal{C}$.  Unlike the generators themselves, $h^{-1}(hD_{x_i} - hD_{x_j}) u_k = 0$.  We may regard $D_{x_i} - D_{x_j}$ as a second microlocal operator\footnote{Again, after truncation.}.
Since $(D_{x_i} - D_{x_j}) u_k = 0$,
\[
{}^2 \mathrm{WF}^{\infty,0}(u_k) \subset \left\{{}^2 \sigma(D_{x_i} - D_{x_j}) = \frac{\xi_i}{h} - \frac{\xi_j}{h} = 0\right\}.
\]
So we must have $\hat{\xi}_i = \hat{\xi}_j$ for these $i,j$.  This leads to the following claim:

\emph{Claim:} If $k\rightarrow +\infty$, then
\[
{}^2 \mathrm{WF}^{\infty,0}(u_k) = \left\{\hat{\xi}_1 = \ldots = \hat{\xi}_{n-1} = \frac{1}{\sqrt{n-1}}, \xi_n = 1\right\}.
\]
If instead $k\rightarrow -\infty$,
\[
{}^2 \mathrm{WF}^{\infty,0}(u_k) = \left\{\hat{\xi}_1 = \ldots = \hat{\xi}_{n-1} = -\frac{1}{\sqrt{n-1}}, \xi_n = 1\right\}.
\]

\emph{Proof of claim:} In both cases, all of the $\hat{\xi}_j$ ($j < n$) have the same sign.  So it suffices to construct a second microlocal operator $A$ that `distinguishes' signs for, say, $\hat{\xi}_1$, and also for which $A u_k\in I^\infty_{(0)}(\mathcal{C})$.

We partition $S_\mathrm{tot}$ into $(n-1)$ symmetric pieces, the blowup of $\{\xi_j\neq 0\}\times\{h\geq 0\}$ for each of $j = 1,\ldots,n-1$.  Since WLOG we chose to distinguish $\hat{\xi}_1>0$ from $\hat{\xi}_1<0$, we write a symbol for $A$ which is supported in the lift of $\{\xi_1\neq 0\}\times\{h\geq 0\}$ (and which is zero on the other $(n-2)$ pieces).

Now, suppose $k\rightarrow +\infty$.  We write down $a\in C^\infty_c (S_\mathrm{tot})$ so that $A = {}^h \mathrm{Op_r}(a)$ is elliptic for $\hat{\xi}_1<0$ (but characteristic for $\hat{\xi}_1>0$), and so that $A u_k\in I^\infty_{(0)}(\mathcal{C})$.  This will exclude the point
\[
\left(x,\hat{\xi}_1=-\frac{1}{\sqrt{n-1}},\ldots,\hat{\xi}_{n-1}=-\frac{1}{\sqrt{n-1}},\xi_n=1\right)
\]
from ${}^2 \mathrm{WF}^{\infty,0}(u_k)$.

Let $\psi\in C^\infty(\mathbb{R})$ be supported in $(0,\infty)$.  Let $\varphi_j(x_j)\in C^\infty_c(S^1)$ be a bump function supported near some (arbitrary) $\bar{x}_j$, and let $\varphi(x) = \prod^n_{j=1} \varphi_j(x_j)$.  Define
\[
a(x,\xi;h) := \psi\left(-\frac{\xi_1}{h}\right) \varphi(x)\in C^\infty_c(S_\mathrm{tot}).
\]

Since $A = {}^h \mathrm{Op_r}(a)$, we have
\[
A u_k(x) = \frac{1}{(2\pi h)^n} \int_{\mathbb{R}^n} \int_{\mathbb{T}^n} e^{\frac{i}{h}(x-y)\cdot\xi} \psi\left(-\frac{\xi_1}{h}\right) \varphi(y) u_k(y)\ dyd\xi.
\]
Let $\mathcal{F}$ be the \emph{non-semiclassical} Fourier transform; recall this takes smooth, compactly supported functions to Schwartz functions.  We change variables by setting $\eta_j = \xi_j / h$, so that
\[
A u_k(x) = \mathcal{F}^{-1}_{\eta\rightarrow x} (\psi(-\eta_1) \mathcal{F}_{y\rightarrow\eta} \varphi u_k)(x).
\]

We compute
\begin{align*}
\mathcal{F}_{y\rightarrow\eta} (\varphi u_k)(\eta) &= \int_{\mathbb{T}^n} e^{-iy\cdot\eta} \varphi(y) e^{i(k(y_1+\ldots+y_{n-1})+k^2 y_n)}\ dy \\
&= \left[\prod^{n-1}_{j=1} \int_{S^1} e^{-iy_j\eta_j} \varphi_j(y_j) e^{iky_j}\ dy_j\right]\left[\int_{S^1} e^{-iy_n\eta_n} \varphi_n(y_n) e^{ik^2 y_n}\ dy_n\right].
\end{align*}
Hence,
\[
A u_k(x) = \mathcal{F}^{-1}_{\eta\rightarrow x} (\psi(-\eta_1) \widehat{\varphi}_1(\eta_1-k)\cdots\widehat{\varphi}_{n-1}(\eta_{n-1}-k)\widehat{\varphi}_n(\eta_n-k^2))(x).
\]

Finally, since $\psi(-\eta_1) = \psi(-\xi_1/h)$ is supported in $\{\eta_1\leq 0\}$, yet $\widehat{\varphi}_1(\eta_1-k)$ is a Schwartz function whose supported is translated to the right as $k\rightarrow +\infty$, we may conclude that $A u_k = O_{L^2}(h_k^\infty)$.  This is stronger than $A u_k\in I^\infty_{(0)}(\mathcal{C})$, so certainly
\[
{}^2 \mathrm{WF}^{\infty,0}(u_k) = \mathbb{T}^n \times \left\{\hat{\xi}_1=\ldots=\hat{\xi}_{n-1}=\frac{1}{\sqrt{n-1}}, \xi_n=1\right\}.
\]
\end{example}

\section{\textsc{Results on propagation of second wavefront} \label{propagation results}}

\subsection{Real principal type propagation}

Let $\mathcal{C}\subset T^* \mathbb{T}^n$ be any linear coisotropic.

\begin{lemma} \label{lem:bounding}
For $s,r,\alpha,\beta\in\mathbb{R}$, let $f\in I^\alpha_{(r)} (\mathcal{C})$, $g\in I^\beta_{(s)} (\mathcal{C})$, and $P\in\Re^{r+s}$.  Then
\[
|\left\langle Pf,g \right\rangle_{L^2(\mathbb{T}^n)}| \lesssim \|f\|_{I^\alpha_{(r)}} \|g\|_{I^\beta_{(s)}}
\]
\end{lemma}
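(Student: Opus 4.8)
The plan is to conjugate $P$ by the Fourier multiplier defining the partial Sobolev norms, thereby reducing the estimate to an $L^2$ operator bound that reads off directly from Condition \ref{involutizing condition}. Let $\Lambda := \bigl(I + \sum_{j=1}^{d} B_j^{2}\bigr)^{1/2}$ with $B_j = \mathbf{v}_j\cdot hD_x$. This $\Lambda$ is a Fourier multiplier (it realizes multiplication by $(1+\sum_j(\mathbf{v}_j\cdot\xi)^2)^{1/2}$ on the semiclassical Fourier side), hence self-adjoint on $L^2$, commuting with every $B_k$, with $\|\Lambda^{-t}\|_{L^2\to L^2}\le 1$ for all $t\ge 0$; and by \eqref{partial Sobolev norm} one has $\|u\|_{I^k_{(t)}} = h^{-t}\|\Lambda^{k}u\|_{L^2}$ for every $k,t\in\mathbb{R}$. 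I would then put $F := \Lambda^{\alpha}f$ and $G := \Lambda^{\beta}g$, so that $\|F\|_{L^2} = h^{r}\|f\|_{I^{\alpha}_{(r)}}$ and $\|G\|_{L^2} = h^{s}\|g\|_{I^{\beta}_{(s)}}$ are finite, while $f = \Lambda^{-\alpha}F$ and $g = \Lambda^{-\beta}G$. Since $\Lambda^{-\beta}$ is self-adjoint,
\[
\langle Pf,g\rangle_{L^2} \;=\; \bigl\langle \Lambda^{-\beta}P\Lambda^{-\alpha}F,\; G\bigr\rangle_{L^2},
\]
so it suffices to prove $\bigl\|\Lambda^{-\beta}P\Lambda^{-\alpha}\bigr\|_{L^2\to L^2}\lesssim h^{-r-s}$; the powers of $h$ then cancel against $\|F\|_{L^2}\|G\|_{L^2}$ and the desired bound $|\langle Pf,g\rangle|\lesssim\|f\|_{I^{\alpha}_{(r)}}\|g\|_{I^{\beta}_{(s)}}$ follows.

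To control $\Lambda^{-\beta}P\Lambda^{-\alpha}$ I would fix an integer $N\ge 0$ with $2N\ge\max(-\alpha,-\beta,0)$ and factor
\[
\Lambda^{-\beta}P\Lambda^{-\alpha} \;=\; \Lambda^{-\beta-2N}\bigl(\Lambda^{2N}P\Lambda^{2N}\bigr)\Lambda^{-\alpha-2N}.
\]
The outer factors have operator norm $\le 1$ because $-\beta-2N\le 0$ and $-\alpha-2N\le 0$, so everything reduces to $\Lambda^{2N}P\Lambda^{2N}$. Here $\Lambda^{2N} = \bigl(I+\sum_j B_j^{2}\bigr)^{N}$ is an honest finite linear combination of monomials $\mathbf{B}^{2\mu} = B_1^{2\mu_1}\cdots B_d^{2\mu_d}$ with $|\mu|\le N$, whence $\Lambda^{2N}P\Lambda^{2N}$ is a finite linear combination of the operators $\mathbf{B}^{2\mu}P\mathbf{B}^{2\nu}$ with $|\mu|,|\nu|\le N$. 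Since $P\in\Re^{r+s}$, Condition \ref{involutizing condition} (applied with the multi-indices $2\mu$, $2\nu$, and with the paper's convention that membership in $L^{2}$ for an $h$-dependent family is a uniform statement) gives $\bigl\|\mathbf{B}^{2\mu}P\mathbf{B}^{2\nu}\bigr\|_{L^2\to L^2}\lesssim h^{\,2|\mu|+2|\nu|-r-s}\le h^{-r-s}$, using $h\le 1$. Summing the finitely many terms yields $\|\Lambda^{2N}P\Lambda^{2N}\|_{L^2\to L^2}\lesssim h^{-r-s}$, hence $\|\Lambda^{-\beta}P\Lambda^{-\alpha}\|_{L^2\to L^2}\lesssim h^{-r-s}$, as required.

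The only step that is more than bookkeeping is handling the fact that $\alpha,\beta$ (and $r,s$) are arbitrary reals: then $\Lambda^{-\alpha}$ and $\Lambda^{-\beta}$ need not be bounded, so one cannot simply split $\|Pf\|$ off against a dual norm of $g$. The resolution is exactly the factorization above — peeling off a large enough even power $\Lambda^{2N}$, which is a polynomial in the commuting generators $B_j$ and is therefore absorbed into $P$ through the involutizing estimate, while the leftover factors $\Lambda^{-\beta-2N}$ and $\Lambda^{-\alpha-2N}$ are contractions. The remaining ingredients — the identity $\|u\|_{I^k_{(t)}} = h^{-t}\|\Lambda^k u\|_{L^2}$, self-adjointness of $\Lambda$, and converting the $L^2$ operator bound into the pairing estimate — are routine.
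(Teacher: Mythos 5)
Your argument is correct, but it is not the route the paper takes. The paper factors the pairing through the second microlocal calculus itself: it picks a globally elliptic $T\in\Psi^{-N,s}_{2,h}(\mathcal{C})$ with parametrix $T'$ (so $TT'+Q=\mathrm{Id}$, $Q\in\Re^0$, via Lemma \ref{global parametrix}), writes $\left\langle Pf,g\right\rangle=\left\langle T'Pf,T^*g\right\rangle+\left\langle Pf,Q^*g\right\rangle$, and then uses that $h^sP\in\Re^r$, $h^{-s}Q^*\in\Re^s$ send the relevant coisotropic spaces into $L^2$ (the mapping property of residual operators) before applying Cauchy--Schwarz. You instead exploit the flat, constant-coefficient structure directly: the generators $B_j=\mathbf{v}_j\cdot hD_x$ commute and the $I^k_{(t)}$-norm of \eqref{partial Sobolev norm} is exactly $h^{-t}\|\Lambda^k\cdot\|_{L^2}$ for the Fourier multiplier $\Lambda=(I+\sum_j B_j^2)^{1/2}$, so conjugating $P$ by $\Lambda^{-\alpha},\Lambda^{-\beta}$ and peeling off an even power $\Lambda^{2N}$ (a polynomial in the $\mathbf{B}^{2\mu}$) reduces everything to the operator bounds $\|\mathbf{B}^{2\mu}P\mathbf{B}^{2\nu}\|_{L^2\to L^2}\lesssim h^{2|\mu|+2|\nu|-r-s}$, which is precisely Condition \ref{involutizing condition} read quantitatively. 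What your approach buys is elementarity and transparency: no parametrix construction, no appeal to Lemma \ref{mapping property lemma}, and the powers of $h$ are tracked explicitly. What the paper's approach buys is independence from the explicit multiplier structure: it only uses that $\Re$ is an ideal with the stated mapping property, so it would survive in settings where the generators are not commuting constant-coefficient operators and the norm is not literally a Fourier multiplier norm. Two points you should make explicit if you write this up: first, as you note, Condition \ref{involutizing condition} must be read as a uniform (operator-norm) statement in $h$, which is the paper's convention and is also what the paper's own proof tacitly uses; second, when $\alpha<0$ (so $f$ need only be a distribution), the expression $Pf$ itself is defined through exactly your factorization $P\Lambda^{2N}$ applied to $\Lambda^{-\alpha-2N}F\in L^2$ --- this is the same implicit extension the paper invokes via the mapping property $R:I^{-\infty}_{(s)}\to I^{\infty}_{(s-l)}$, so it is worth one sentence to say the extension is well defined and independent of $N$.
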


For the definition of the norm $\|\cdot\|_{I^\alpha_{(r)}}$, refer to \eqref{partial Sobolev norm}.

\begin{proof}
Essentially, we factor $P\in\Re^{r+s}$ as the product of a residual operator of order $r$ and a residual operator of order $s$.  For all $N$, we may choose globally elliptic $T\in\Psi^{-N,s}_{2,h}(\mathcal{C})$.  Then there exists $T'\in\Psi^{N,-s}_{2,h}(\mathcal{C})$ such that $TT' + Q = \mathrm{Id}, \ Q\in\Re^0$.  Thus,
\[
\left\langle Pf,g \right\rangle_{L^2(\mathbb{T}^n)} = \left\langle T'Pf,T^* g \right\rangle_{L^2(\mathbb{T}^n)} + \left\langle QPf,g \right\rangle_{L^2(\mathbb{T}^n)}.
\]
We have
\[
\left\langle QPf,g \right\rangle_{L^2(\mathbb{T}^n)} = \left\langle Pf,Q^* g \right\rangle_{L^2(\mathbb{T}^n)} = \left\langle h^s Pf,h^{-s} Q^* g \right\rangle_{L^2(\mathbb{T}^n)}.
\]
Since $h^s P\in\Re^r$, $h^{-s} Q^*\in\Re^s$, then $h^s Pf, h^{-s} Q^* g\in I^\infty_{(0)}(\mathcal{C})\subset L^2(\mathbb{T}^n)$.  Thus,
\[
|\left\langle h^s Pf,h^{-s} Q^* g \right\rangle_{L^2(\mathbb{T}^n)}| \leq \|h^s Pf\|_{L^2(\mathbb{T}^n)} \|h^{-s} Q^* g\|_{L^2(\mathbb{T}^n)} \lesssim \|f\|_{I^\alpha_{(r)}} \|g\|_{I^\beta_{(s)}}.
\]
At the same time, $T'Pf\in I^\infty_{(0)}(\mathcal{C})\subset L^2(\mathbb{T}^n)$, and $T^* g\in L^2(\mathbb{T}^n)$ for all $N \geq -\beta$, so
\[
|\left\langle T'Pf,T^* g \right\rangle_{L^2(\mathbb{T}^n)}| \leq \|T'Pf\|_{L^2(\mathbb{T}^n)} \|T^* g\|_{L^2(\mathbb{T}^n)} \lesssim \|f\|_{I^\alpha_{(r)}} \|g\|_{I^\beta_{(s)}}. \qedhere
\]
\end{proof}

Fix $s\in\mathbb{R}$.  Let $P\in\Psi^{m,l}_{2,h}(\mathcal{C})$, and suppose that $p_0 = {}^2 \sigma_{m,l}(P)$ is real valued.  Let $H_{p_0}$ be the corresponding Hamiltonian vector field.  We first show that if $u\in I^{-\infty}_{(s)}(\mathcal{C})$, and if $u$ satisfies the equation $Pu = f$,
then ${}^2\mathrm{WF}(u) \backslash {}^2\mathrm{WF}(f)$ propagates along null bicharacteristics.

\begin{remark} \label{rmk:scaling}
$S_\mathrm{pr}$ is a manifold with boundary $SN(\mathcal{C})$, so $H_{p_0}$ must be rescaled to be tangent to this boundary.  If $\rho_\mathrm{ff}$ is a defining function for $SN(\mathcal{C})$, the rescaled vector field $\rho_\mathrm{ff}^{l-m+1} H_{p_0}$ is tangent to $SN(\mathcal{C})$.  In particular, if any point in an orbit of $\rho_\mathrm{ff}^{l-m+1} H_{p_0}$ lies in $SN(\mathcal{C})$, then the entire orbit lies in $SN(\mathcal{C})$.
\end{remark}

\begin{theorem} \label{primary propagation}
For $P\in\Psi^{m,l}_{2,h}(\mathcal{C})$, if $Pu = f$ and $p_0$ is real valued, then for any $k\in\mathbb{R}$
\[
{}^2\mathrm{WF}^{k,s}(u) \backslash {}^2\mathrm{WF}^{k-m+1,s-l}(f)
\]
propagates along the flow of $\rho_\mathrm{ff}^{l-m+1} H_{p_0}$.
\end{theorem}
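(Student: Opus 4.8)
The plan is to adapt Hörmander's positive-commutator argument to the second microlocal calculus $\Psi_{2,h}(\mathcal{C})$, using the symbol calculus (Lemma \ref{short exact sequence}), the ellipticity-based elliptic regularity result (Corollary \ref{elliptic regularity}), and the pairing bound for residual operators (Lemma \ref{lem:bounding}). By the remarks on graded wavefront containment \eqref{eqn:containment} and the pseudolocality/partial-converse pair (Lemma \ref{lem:pseudolocality}, Corollary \ref{wavefront is characteristic}), it suffices to prove the following \emph{local} propagation statement: if $\overline{p}\in S_\mathrm{pr}$ lies on an integral curve $\gamma$ of $\rho_\mathrm{ff}^{l-m+1}H_{p_0}$, if some point $\gamma(-\delta)$ (backward along the flow) is not in ${}^2\mathrm{WF}^{k,s}(u)$, and if a whole segment of $\gamma$ avoids ${}^2\mathrm{WF}^{k-m+1,s-l}(f)$, then $\overline{p}\notin{}^2\mathrm{WF}^{k,s}(u)$. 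Working microlocally near $\overline{p}$ we may, using the partition of $S_\mathrm{pr}$ and the coordinates $(x,\zeta,\Xi,\mathbf{W})$ from the microlocal-parametrix section, reduce to a neighborhood of a single point in one chart $L_1$; away from $SN(\mathcal{C})$ this is just the standard semiclassical statement, so the genuinely new content is near the corner $SN(\mathcal{C})$, where Remark \ref{rmk:scaling} guarantees the rescaled field is tangent and the flow stays in $SN(\mathcal{C})$.

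The heart is the commutant construction. I would build $A=A^*\in\Psi^{k-(m-1)/2,\,s-l/2}_{2,h}(\mathcal{C})$ whose second principal symbol $a$ is supported in a small conic (in the blown-up sense) neighborhood of the flow segment and satisfies a transport-type inequality
\[
\rho_\mathrm{ff}^{l-m+1}H_{p_0}(a^2) \;=\; -b^2 + e,
\]
where $b$ is elliptic near $\overline{p}$ and $e$ is supported near the backward endpoint $\gamma(-\delta)$ where regularity is already known; this is the usual ODE-along-characteristics construction, carried out for symbols on $S_\mathrm{pr}$ using the symplectic form lifted from $T^*\mathbb{T}^n$. Quantizing and using that ${}^2\sigma(i[P,A^*A])=\{p_0,a^2\}$ (with the appropriate scaling so that the relevant Hamiltonian is $\rho_\mathrm{ff}^{l-m+1}H_{p_0}$), the sharp Gårding inequality in this calculus — which follows from $L^2$-boundedness (Proposition \ref{boundedness prop}), the composition law (Corollary \ref{composition law}), and ellipticity/parametrix constructions — gives
\[
\|Bu\|_{L^2}^2 \;\lesssim\; |\langle i[P,A^*A]u,u\rangle| + \|Eu\|_{L^2}^2 + (\text{residual and lower-order terms}).
\]
One then expands $\langle i[P,A^*A]u,u\rangle = \langle i(PA^*A - A^*AP)u,u\rangle$, substitutes $Pu=f$, and moves half the operators onto $u$ and half onto the copy of $u$ in the pairing, controlling each piece by $\|A' f\|$ (finite since $\gamma$ avoids ${}^2\mathrm{WF}(f)$), $\|A'' u\|$ (finite by an a priori "one order lower" regularity assumption, upgraded at the end by the usual iteration), $\|Eu\|$ (finite by the backward hypothesis), and genuinely residual pairings estimated by Lemma \ref{lem:bounding}. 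A standard bootstrap — first assume $u$ has ${}^2\mathrm{WF}^{k-1/2,s}$-regularity on the segment, run the estimate, gain half an order, iterate — removes the a priori assumption and yields $\overline{p}\notin{}^2\mathrm{WF}^{k,s}(u)$.

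The main obstacle I anticipate is two intertwined bookkeeping issues near the corner $SN(\mathcal{C})$: (i) the Hamiltonian vector field $H_{p_0}$ is only tangent to $\partial S_\mathrm{pr}$ after multiplication by $\rho_\mathrm{ff}^{l-m+1}$, so the commutator symbol identity ${}^2\sigma_{m+m'-1,l+l'}(i[A,B])=\{{}^2\sigma(A),{}^2\sigma(B)\}$ has to be tracked with its exact $\rho_\mathrm{ff}$-weights, and the commutant $a$ must be chosen so that $\rho_\mathrm{ff}^{l-m+1}H_{p_0}a$ lands in the correct symbol space $S^{\bullet}(S_\mathrm{pr})$ with no loss at the boundary; and (ii) the positivity of the main term must survive the passage from symbols to operators, i.e. one needs a sharp Gårding inequality valid uniformly up to the corner — this is where the precise structure of the residual ideal $\Re$ (Proposition \ref{two-sided ideal prop}) and the composition/reduction theorems do the work, since the "error" in Gårding is exactly a residual operator and is absorbed via Lemma \ref{lem:bounding}. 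Everything else — the ODE construction of $a$, the iteration to remove the a priori hypothesis, and the reduction of the theorem to a point via \eqref{eqn:containment} and Lemma \ref{lem:pseudolocality} — is routine once the weighted symbol calculus near $SN(\mathcal{C})$ is set up carefully.
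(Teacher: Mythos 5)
Your overall strategy (positive commutator plus a half-order bootstrap, reduction to a point on a bicharacteristic segment, residual pairings handled by Lemma \ref{lem:bounding}) is the same as the paper's, but there are two genuine gaps at the heart of the argument. First, you pass from symbol positivity to operator positivity by invoking ``the sharp G\r{a}rding inequality in this calculus,'' asserting that it follows from $L^2$-boundedness (Proposition \ref{boundedness prop}), the composition law (Corollary \ref{composition law}) and parametrices. It does not: sharp G\r{a}rding is a separate theorem (its usual proofs go through anti-Wick/Friedrichs quantization or an FBI-transform argument), it is nowhere established for $\Psi_{2,h}(\mathcal{C})$, and its error term would be one order lower in the $m$-filtration, not residual as you claim. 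The paper avoids needing it entirely: after computing ${}^2\sigma_{2\alpha,2s}\bigl(-i(A^*AP-P^*A^*A)\bigr)=b^2-e^2+ga^2$, it quantizes an \emph{exact} sum of squares, $-i(A^*AP-P^*A^*A)=B^*B+E^*E+A^*GA+R$ with $R$ one order lower, so only the short exact sequence (Lemma \ref{short exact sequence}) is used. If you want the G\r{a}rding route you must prove such an inequality for this blown-up calculus, uniformly up to the corner $SN(\mathcal{C})$; as written this is a missing ingredient, not a corollary of the quoted results.

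Second, and more seriously, your commutator identity $\rho_\mathrm{ff}^{l-m+1}H_{p_0}(a^2)=-b^2+e$ with ``residual and lower-order terms'' ignores the contribution of $P-P^*$: the theorem only assumes the \emph{principal} symbol $p_0$ is real, so $-i(P-P^*)A^*A$ produces a term with symbol $ga^2$, where $g$ has no sign, lying in the \emph{same} class $S^{2s-2\alpha}(S_\mathrm{pr})$ as $b^2$. It is neither residual nor lower order, so it cannot be absorbed by your a priori ${}^2\mathrm{WF}^{k-1/2,s}$ hypothesis; the same problem recurs when you estimate $\langle Au,Af\rangle$, since the term $\|TAu\|^2$ (with $T$ elliptic of order $((m-1)/2,l/2)$) is again of the same order as $\|Bu\|^2$. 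The paper's fix is built into the commutant: the cutoff $\chi_0(t)=e^{-M/t}$ forces $(\log\chi_0)'\geq 1/(2\lambda)$ on the relevant interval for $M$ large, so that $a^2\lesssim \lambda^{-1}b^2$ pointwise and both the $A^*GA$ term and the $\|TAu\|^2$ terms are absorbed into $\|Bu\|^2$. Your generic ``ODE-along-characteristics'' construction never specifies this domination of $a^2$ by the derivative term, and without it the estimate does not close. (Away from these two points, your reductions via \eqref{eqn:containment}, Lemma \ref{lem:pseudolocality} and Corollary \ref{wavefront is characteristic}, the choice of coordinates straightening $\rho_\mathrm{ff}^{l-m+1}H_{p_0}$ tangent to $SN(\mathcal{C})$, and the half-order iteration match the paper's proof.)
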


\begin{proof}
First, note that
\[
{}^2 \mathrm{WF}^{k,s}(u) \backslash {}^2 \mathrm{WF}^{k-m,s-l}(f) \subset {}^2 \mathrm{char}(P),
\]
due to Corollary \ref{wavefront is characteristic}.

Let $\overline{H}_{p_0} = \rho_\mathrm{ff}^{l-m+1} H_{p_0}$.  Away from the boundary $SN (\mathcal{C})$, this result is no different from the usual semiclassical real principal type propagation.  For this reason, we take a point $q\in SN (\mathcal{C})$ (at which $\overline{H}_{p_0}$ does not vanish, i.e., is not radial).

Since $u\in I^{-\infty}_{(s)}(\mathcal{C})$, there exists $\beta$ for which ${}^2 \mathrm{WF}^{\beta,s}(u) = \emptyset$.
(More precisely, if $u\in I^\gamma_{(s)}(\mathcal{C})$, then ${}^2 \mathrm{WF}^{\beta,s}(u) = \emptyset$ for any $\beta\leq\gamma$.)  For some $\alpha > \beta$, assume that ${}^2 \mathrm{WF}^{\alpha - 1/2,s}(u)$ propagates along $\overline{H}_{p_0}$ flow.  Moreover, assume absence of ${}^2 \mathrm{WF}^{\alpha,s}(u)$ at one end of a bicharacteristic segment (the end opposite $q$):
\[
\exp{(t_0 \overline{H}_{p_0})} q \notin {}^2 \mathrm{WF}^{\alpha,s}(u)
\]
for some small $t_0 > 0$.  This implies, by \eqref{eqn:containment}, that $\exp{(t_0 \overline{H}_{p_0})} q \notin {}^2 \mathrm{WF}^{\alpha - 1/2,s}(u)$.  Therefore,
\[
\exp{(t \overline{H}_{p_0})} q \notin {}^2 \mathrm{WF}^{\alpha - 1/2,s}(u), \ t\in [0,t_0].
\]
Finally, assume absence of ${}^2 \mathrm{WF}^{\alpha-m+1,s-l}(f)$ along the whole segment:
\begin{equation} \label{assumption on inhomogeneity}
\exp{(t \overline{H}_{p_0})} q \notin {}^2 \mathrm{WF}^{\alpha-m+1,s-l}(f), \ t\in [0,t_0].
\end{equation}
We will then show that
\[
\exp{(t \overline{H}_{p_0})} q \notin {}^2 \mathrm{WF}^{\alpha,s}(u), \ t\in [0,t_0].
\]
The idea is that $\alpha$ is increased, in increments of a half, until it reaches $k$.  (It may be necessary to make minor numerological adjustments so that $\alpha$ equals $\beta$ plus an integral multiple of one-half, and $k$ is $\alpha$ plus an integral multiple of one-half.)

Next, define $\chi_0(s) = 0$ for $s \leq 0$, $\chi_0(s) = e^{-M/s}$ for $s > 0$ ($M>0$ to be specified); so $\chi_0$ is increasing on $(0,\infty)$ (but bounded above by one).  Let $\chi \geq 0$ be a smooth function with $\chi(s) = 0$ for $s\leq 0$ which increases to $\chi(s) = 1$ for $s \geq 1$, with $\sqrt{\chi}$ and $\sqrt{\chi'}$ both smooth.  Finally, let $\varphi$ be any smooth cutoff supported in $(-1,1)$.

We can choose coordinates $\rho_1,\ldots,\rho_{2n}$ on $S_\mathrm{pr}$ centered at $q$ in which (i) $\overline{H}_{p_0} = \partial_{\rho_1}$ and (ii) $SN (\mathcal{C}) = \{\rho_{2n}=0\}$, so that we may take $\rho_{2n}$ as front face boundary defining function.  Set $\rho'=(\rho_2,\ldots,\rho_{2n})$.  So, we assumed $\alpha,s$-regularity of $u$ at the point $(\rho_1=t_0,\rho'=0)$, and the other end of the piece of bicharacteristic is $q=(\rho_1=0,\rho'=0)$.  Let $\lambda$ be a positive parameter, and define the principal symbol of the commutant as follows:
\begin{align*}
a_\lambda(\rho_1,\rho')&=\rho_{2n}^{-s+l/2+\alpha-(m-1)/2} \varphi^2(\lambda^2 |\rho'|^2) \ \chi_0(\lambda\rho_1 + 1) \ \chi(\lambda(t_0 - \rho_1)-1)\in S^{s-l/2-\alpha+(m-1)/2}(S_\mathrm{pr}).
\end{align*}
Then on the support of $a$, we have $|\rho'| < \lambda^{-1}$ and $\rho_1 \geq -\lambda^{-1}$, $\rho_1 \leq t_0 - \lambda^{-1}$.  If the parameter $\lambda > 0$ is taken to be large, then $a$ is supported in a neighborhood of the bicharacteristic segment.  By the short exact sequence for second principal symbol, there exists $A\in\Psi^{\alpha-(m-1)/2, s - l/2}_{2,h}(\mathcal{C})$ which has (real valued) principal symbol $a$ and such that ${}^2 \mathrm{WF}_{s - l/2}'(A) = \mathrm{supp}(a)$.

``Commuting'' this $A$ with $P$,
\begin{align*}
{}^2 \sigma_{2\alpha,2s}(-i(A^* AP - P^* A^* A)) &= {}^2 \sigma_{2\alpha,2s}(-i[A^* A,P]) + {}^2 \sigma_{2\alpha,2s}(-i(P - P^*)A^* A) \\
&= \overline{H}_{p_0}\left(a^2\right) - i \ {}^2 \sigma_{m-1,l}(P - P^*) \ {}^2 \sigma_{2\alpha - m + 1,2s - l}(A^* A) \\
&= b^2 - e^2 + g a^2.
\end{align*}


(The $2\alpha+1,2s$-principal symbol of the commutator vanishes.)  Also, notice that the $m,l$-principal symbol of $P-P^*$ vanishes, since $p_0$ is real valued.  Above, $b^2,e^2\in S^{2s-2\alpha}(S_\mathrm{pr})$ arise when $\overline{H}_{p_0} = \partial_{\rho_1}$ is applied to $\chi_0^2(\lambda\rho_1 + 1)$ and $\chi^2(\lambda(t_0 - \rho_1)-1)$, respectively:
\begin{align*}
b^2 &= 2\lambda \ \rho_\mathrm{ff}^{-2s+l+2\alpha-m+1} \varphi^4\left(\lambda^2 |\rho'|^2\right) \ \chi_0(\lambda\rho_1 + 1) \ \chi_0'(\lambda\rho_1 + 1) \ \chi^2(\lambda(t_0 - \rho_1) - 1) \\
e^2 &= -2\lambda \ \rho_\mathrm{ff}^{-2s+l+2\alpha-m+1} \varphi^4\left(\lambda^2 |\rho'|^2\right) \ \chi_0^2(\lambda\rho_1 + 1) \ \chi(\lambda(t_0 - \rho_1)-1) \ \chi'(\lambda(t_0 - \rho_1)-1)
\end{align*}
Note that since $\chi(\lambda(t_0 - \rho_1)-1)$ is decreasing for $\rho_1 \in [t_0 - 2\lambda^{-1},t_0 - \lambda^{-1}]$ (and otherwise constant), $\chi'(\lambda(t_0 - \rho_1)-1)$ is negative in that interval, which means $e^2$ is actually positive.  So for large $\lambda$, $e$ is supported in the complement of ${}^2 \mathrm{WF}^{\alpha,s}(u)$.  Our assumption that $\sqrt{\chi}$ and $\sqrt{\chi'}$ are smooth ensures that $e\in S^{s-\alpha}(S_\mathrm{pr})$.  Conversely, since $\chi_0(\lambda\rho_1 + 1)$ is increasing for $\rho_1 \in (-\lambda^{-1},\infty)$, $\chi_0'(\lambda\rho_1 + 1)$ is positive in that interval (but $b^2$ is ``turned off'' at $t_0 - \lambda^{-1}$ by the $\chi^2$ term); hence, $b$ is supported along the whole segment.

Thus, again by the short exact sequence,
\begin{equation} \label{remainder}
-i(A^* AP - P^* A^* A) = B^* B + E^* E + A^* GA + R,
\end{equation}
where $R\in\Psi^{2\alpha - 1,2s}_{2,h}(\mathcal{C})$ satisfies ${}^2 \mathrm{WF}_{2s}'(R)\subset {}^2 \mathrm{WF}_{s - l/2}'(A)$, $B\in\Psi^{\alpha,s}_{2,h}(\mathcal{C})$ satisfies ${}^2\sigma_{\alpha,s}(B) = b$ and ${}^2 \mathrm{WF}_s'(B) = \mathrm{supp}(b)$, likewise for $E\in\Psi^{\alpha,s}_{2,h}(\mathcal{C})$; and $G\in\Psi^{m - 1,l}_{2,h}(\mathcal{C})$ has second principal symbol $g$ ($g$ may not be real valued).  By construction, ${}^2 \mathrm{WF}_s'(E)$ is contained in $|\rho'| < \lambda^{-1}$, $\rho_1\in[t_0 - 2\lambda^{-1},t_0 - \lambda^{-1}]$, so for large $\lambda$, ${}^2 \mathrm{WF}_s'(E)$ is contained in the complement of ${}^2 \mathrm{WF}^{\alpha,s}(u)$; thus $\|Eu\|_{L^2(\mathbb{T}^n)} < \infty$.  Similarly, ${}^2 \mathrm{WF}_{2s}'(R)$ is contained in $|\rho'| < \lambda^{-1}$, $\rho_1\in[-\lambda^{-1},t_0 - \lambda^{-1}]$, so for large $\lambda$, ${}^2 \mathrm{ell}(R) \subset {}^2 \mathrm{WF}_{2s}'(R)$ is disjoint from ${}^2 \mathrm{WF}^{\alpha - 1/2,s}(u)$; hence $|\left\langle Ru,u \right\rangle_{L^2(\mathbb{T}^n)}|$ is bounded as well.  On the other hand, ${}^2 \mathrm{WF}_s'(B)$ is contained in $|\rho'| < \lambda^{-1}$, $\rho_1\in[-\lambda^{-1},t_0 - \lambda^{-1}]$,
so \emph{a priori} $\|Bu\|_{L^2(\mathbb{T}^n)}$ is unbounded.

Pairing both sides of equation \eqref{remainder} with the distribution $u$, and using $Pu = f$, we have
\begin{equation} \label{ineq:inequality}
\|Bu\|^2_{L^2(\mathbb{T}^n)} \leq |\left\langle Ru,u \right\rangle_{L^2(\mathbb{T}^n)}| \ + \ |\left\langle Au,G^* Au \right\rangle_{L^2(\mathbb{T}^n)}| \ + 2 \ |\left\langle Au,Af \right\rangle_{L^2(\mathbb{T}^n)}| \ + \|Eu\|^2_{L^2(\mathbb{T}^n)}.
\end{equation}
Here we have used
\[
\left\langle (P^*A^*A - A^*AP)u,u \right\rangle_{L^2(\mathbb{T}^n)} = 2i \ \mathrm{Im}\left\langle Au,APu \right\rangle_{L^2(\mathbb{T}^n)}.
\]
We have already showed that the first and last terms on the right side of \eqref{ineq:inequality} are uniformly bounded as $h\downarrow 0$.

Let $T\in\Psi^{(m-1)/2,l/2}_{2,h}(\mathcal{C})$ be globally elliptic, with parametrix $T'$: $T' T + Q = \mathrm{Id}$ for some $Q\in\Re^0$.  We have
\begin{equation} \label{the other term}
|\left\langle Au,G^* Au \right\rangle_{L^2(\mathbb{T}^n)}| \leq \|TAu\|^2_{L^2(\mathbb{T}^n)} + \|(T')^* G^* Au\|^2_{L^2(\mathbb{T}^n)} + |\left\langle QAu,G^* Au \right\rangle_{L^2(\mathbb{T}^n)}|.
\end{equation}
We may control $\left\langle QAu,G^* Au \right\rangle_{L^2(\mathbb{T}^n)}$ by application of Lemma \ref{lem:bounding}.  Also, the principal symbols of $TA,(T')^* G^* A\in\Psi^{\alpha,s}_{2,h}(\mathcal{C})$ are multiples of $a$, so:

\textit{Claim:} The first two terms on the RHS of \eqref{the other term} may be absorbed into $\|Bu\|^2_{L^2(\mathbb{T}^n)}$, for sufficiently large $M$.  Comparing $a^2$ and $b^2$, it will suffice to show that
\[
\chi_0 (\lambda \rho_1 + 1) \leq 2 \lambda \chi_0'(\lambda \rho_1 + 1) \Longleftrightarrow (\log\chi_0)' \geq \frac{1}{2\lambda}.
\]
This amounts to finding $M$ for which $M\geq (\lambda \rho_1 + 1)^2 / (2\lambda)$.  Since $0\leq \rho_1 \leq t_0$, for all $\lambda$, no matter how large, there exists such an $M$.

Also, Cauchy--Schwarz gives
\begin{equation} \label{inhomogeneity}
|\left\langle Au,Af \right\rangle_{L^2(\mathbb{T}^n)}| \leq \|TAu\|^2_{L^2(\mathbb{T}^n)} + \|(T')^* Af\|^2_{L^2(\mathbb{T}^n)} + |\left\langle QAu,Af \right\rangle_{L^2(\mathbb{T}^n)}|.
\end{equation}


$\|TAu\|^2_{L^2(\mathbb{T}^n)}$ can be absorbed into $\|Bu\|^2_{L^2(\mathbb{T}^n)}$, as justified by the above claim.
$\|(T')^* Af\|^2_{L^2(\mathbb{T}^n)}$ is controlled as a result of assumption \eqref{assumption on inhomogeneity}, as $(T')^* A\in\Psi^{\alpha-m+1,s-l}_{2,h}(\mathcal{C})$.  We may directly apply Lemma \ref{lem:bounding} to control $\left\langle QAu,Af \right\rangle_{L^2(\mathbb{T}^n)}$.


Thus, $\|Bu\|_{L^2(\mathbb{T}^n)}<\infty$, so ${}^2 \mathrm{WF}^{\alpha,s}(u)$ is absent on the whole bicharacteristic segment.
\end{proof}

\begin{example}
We explicitly write down the vector field $H_{p_0}$ in a simple case.  The example is $\mathcal{C} = \{\xi_1=\xi_2=0\}$ in $T^* \mathbb{T}^3$.  The coordinates on $S_\mathrm{pr}$ are $x,\theta,\xi_3$, and $\rho_\mathrm{ff} = \sqrt{\xi_1^2 + \xi_2^2}$, where
\[
\cos(\theta) = \frac{\xi_1}{\sqrt{\xi_1^2 + \xi_2^2}}, \ \ \sin(\theta) = \frac{\xi_2}{\sqrt{\xi_1^2 + \xi_2^2}}.
\]
These coordinates\footnote{In codimension 3 and higher, we would not be able to use cosine and sine, but instead projective coordinates} are valid up to the boundary of $S_\mathrm{pr}$, namely $SN(\mathcal{C}) = \{\rho_\mathrm{ff} = 0\}$.

Let $\omega = \sum_{j=1}^3 dx_j\wedge d\xi_j$ denote the standard symplectic form on $T^* \mathbb{T}^3$.  The blowdown map is
\[
\beta: S_\mathrm{pr}\longrightarrow T^* \mathbb{T}^3, \ \ \beta(x,\rho_\mathrm{ff},\theta,\xi_3) = (x,\rho_\mathrm{ff}\cos\theta,\rho_\mathrm{ff}\sin\theta,\xi_3).
\]

Then the Hamiltonian vector field of $p_0\in C^\infty(S_\mathrm{pr}) = S^0 (S_\mathrm{pr})$ with respect to $\beta^* \omega$ is
\begin{equation} \label{Hamiltonian vf}
H_{p_0} = \left(\frac{\partial p_0}{\partial\rho_\mathrm{ff}} \cos\theta - \frac{1}{\rho_\mathrm{ff}} \frac{\partial p_0}{\partial\theta} \sin\theta\right) \frac{\partial}{\partial x_1}
+ \left(\frac{\partial p_0}{\partial\rho_\mathrm{ff}} \sin\theta + \frac{1}{\rho_\mathrm{ff}} \frac{\partial p_0}{\partial\theta} \cos\theta\right) \frac{\partial}{\partial x_2}
+ \frac{\partial p_0}{\partial\xi_3} \frac{\partial}{\partial x_3} +
\end{equation}
\[
+ \frac{1}{\rho_\mathrm{ff}} \left(\frac{\partial p_0}{\partial x_1} \sin\theta - \frac{\partial p_0}{\partial x_2} \cos\theta\right) \frac{\partial}{\partial\theta}
- \left(\cos\theta \frac{\partial p_0}{\partial x_1} + \sin\theta \frac{\partial p_0}{\partial x_2}\right) \frac{\partial}{\partial\rho_\mathrm{ff}} - \frac{\partial p_0}{\partial x_3} \frac{\partial}{\partial\xi_3}.
\]
Note that $\rho_\mathrm{ff} H_{p_0}$ is well defined up to the boundary.  \emph{However, as explained below, it may not be necessary to multiply by $\rho_\mathrm{ff}$ to ensure tangency to the boundary.}

Moreover, if $p_0(x,\xi_1,\xi_2,\xi_3)$ is smooth (in particular, smooth in $\xi_1$ and $\xi_2$), then since $\xi_1 = \rho_\mathrm{ff}\cos\theta$, $\xi_2 = \rho_\mathrm{ff}\sin\theta$, we have
\begin{equation} \label{cancellation}
\frac{\partial p_0}{\partial\theta} = \rho_\mathrm{ff}\left(\cos\theta\frac{\partial p_0}{\partial\xi_2} - \sin\theta\frac{\partial p_0}{\partial\xi_1}\right).
\end{equation}
Notice that $\rho_\mathrm{ff}$ cancels with $\rho_\mathrm{ff}^{-1}$ in the coefficients of $\partial_{x_1}$ and $\partial_{x_2}$ above.


\end{example}




\subsection{Principal type propagation, version 2}

We start with smooth, real valued principal symbol $p_0(\xi)$ depending only on the fiber variables in $T^* \mathbb{T}^n$.
Then the corresponding Hamiltonian vector field $H_{p_0}$ is \emph{a priori} tangent to the boundary $SN(\mathcal{C})$.  This can be seen in \eqref{Hamiltonian vf} by setting $\partial_{x_j} p_0 = 0$, together with \eqref{cancellation}.  Thus, even without rescaling, $H_{p_0}$ is well defined up to $SN(\mathcal{C})$.

So we have the following theorem, with the same proof as that of \autoref{primary propagation}:

\begin{theorem} \label{take two}
More generally, let $u\in I^{-\infty}_{(s)}(\mathcal{C})$.  For $P\in\Psi^{m,l}_{2,h}(\mathcal{C})$ with real principal symbol $p_0$, suppose $Pu = O_{L^2(\mathbb{T}^n)}(h^\infty)$.  If $p_0$ depends only on $\xi$, and is smooth in $\xi$, then for any $k$ ${}^2 \mathrm{WF}^{k,s}(u)$ propagates along the flow lines of $H_{p_0}$.
\end{theorem}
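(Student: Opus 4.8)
The plan is to transcribe the positive-commutator argument proving Theorem~\ref{primary propagation} almost verbatim, the only structural change being that $H_{p_0}$ requires no rescaling in order to be tangent to $SN(\mathcal{C})$. First I would record the enabling observation, already visible in \eqref{cancellation}: since $p_0 = p_0(\xi)$ is smooth in $\xi$, all $\partial_{x_j}p_0$ vanish, so the coefficient of $\partial_{\rho_\mathrm{ff}}$ in $H_{p_0}$ is zero and each apparently $\rho_\mathrm{ff}^{-1}$-singular ``angular'' coefficient acquires a compensating factor of $\rho_\mathrm{ff}$. In codimension $d \ge 3$ this is checked in projective coordinates on $S_\mathrm{pr}$ rather than with cosine and sine: parametrizing the normal directions as $[\,\mathbf{v}_1\cdot\xi : \cdots : \mathbf{v}_d\cdot\xi\,]$, the chain rule turns every angular derivative of $p_0$ into $\rho_\mathrm{ff}$ times a smooth function, exactly as in \eqref{cancellation}. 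Consequently $H_{p_0}$ is a genuine smooth vector field on $S_\mathrm{pr}$ that is tangent to $SN(\mathcal{C}) = \{\rho_\mathrm{ff} = 0\}$ and --- in contrast to the rescaled field $\rho_\mathrm{ff}^{l-m+1}H_{p_0}$ of Theorem~\ref{primary propagation}, which is identically zero on the boundary once $l \ne m-1$ --- is generically nonzero there, so its integral curves give genuine propagation inside $SN(\mathcal{C})$. In fact on $SN(\mathcal{C})$ the flow has no $\partial_\xi$ component, so it fixes $\xi$, preserves $\mathcal{C}$ and its spherical normal bundle, and moves only the base variable $x$.

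With this in hand I would run the commutator scheme of Theorem~\ref{primary propagation}. Fix $k$ and a point $q \in {}^2\mathrm{WF}^{k,s}(u)$ at which $H_{p_0}$ does not vanish; by Corollary~\ref{wavefront is characteristic}, $q \in {}^2\mathrm{char}(P)$. Because $H_{p_0}$ is tangent to the boundary, each bicharacteristic lies either entirely in the interior of $S_\mathrm{pr}$ or entirely in $SN(\mathcal{C})$, so near $q$ one can pick flow-box coordinates $\rho_1,\ldots,\rho_{2n}$ with $H_{p_0} = \partial_{\rho_1}$ and $SN(\mathcal{C}) = \{\rho_{2n} = 0\}$, exactly as before; set $\rho' = (\rho_2,\ldots,\rho_{2n})$. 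Starting from an order $\beta$ with ${}^2\mathrm{WF}^{\beta,s}(u) = \emptyset$ (one exists since $u \in I^{-\infty}_{(s)}(\mathcal{C})$), I would raise the order $\alpha$ in half-integer steps up to $k$: assuming $(\alpha,s)$-regularity of $u$ at the far endpoint $\exp(t_0 H_{p_0})q$ of a short segment and $(\alpha-\tfrac12,s)$-regularity along the whole segment, build $A \in \Psi^{\alpha-(m-1)/2,\,s-l/2}_{2,h}(\mathcal{C})$ with the same model symbol $a_\lambda$ --- a front-face weight times $\varphi^2(\lambda^2|\rho'|^2)\,\chi_0(\lambda\rho_1+1)\,\chi(\lambda(t_0-\rho_1)-1)$ --- supported in a $\lambda^{-1}$-neighborhood of the segment. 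The identity ${}^2\sigma_{2\alpha,2s}(-i(A^*AP - P^*A^*A)) = b^2 - e^2 + ga^2$ holds verbatim, the $m,l$-principal symbol of $P - P^*$ dropping out because $p_0$ is real valued.

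Pairing the operator identity $-i(A^*AP - P^*A^*A) = B^*B + E^*E + A^*GA + R$ with $u$ and using $Pu = O_{L^2(\mathbb{T}^n)}(h^\infty)$, the only difference from Theorem~\ref{primary propagation} is that the inhomogeneity is now negligible: an $O_{L^2}(h^\infty)$ family lies in $I^\infty_{(N)}(\mathcal{C})$ for every $N$, hence so does any fixed-order second microlocal operator applied to it, so $\|(T')^* A(Pu)\|_{L^2} < \infty$ and the cross term $\langle QA u, A(Pu)\rangle$ is controlled by Lemma~\ref{lem:bounding} (or directly). The remaining terms go through unchanged: $\|Eu\|_{L^2}$ and $\langle Ru,u\rangle$ are bounded because their microsupports avoid ${}^2\mathrm{WF}^{\alpha,s}(u)$, resp.\ ${}^2\mathrm{WF}^{\alpha-1/2,s}(u)$, for large $\lambda$; the $A^*GA$ and $\|TAu\|^2$ contributions are absorbed into $\|Bu\|^2_{L^2}$ via a global elliptic parametrix, Lemma~\ref{lem:bounding}, and the $\chi_0$-versus-$\chi_0'$ comparison fixing $M$. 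This yields $\|Bu\|_{L^2} < \infty$, i.e.\ $q \notin {}^2\mathrm{WF}^{\alpha,s}(u)$; iterating up to $\alpha = k$ and chaining overlapping segments gives the theorem.

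The only genuinely new point, and hence the main obstacle, is the first paragraph: showing in arbitrary codimension that $p_0 = p_0(\xi)$ forces $H_{p_0}$ to be smooth and tangent to $SN(\mathcal{C})$ without rescaling, so that the flow-box normalization used throughout the commutator argument remains available up to and along the boundary. Once that structural fact is established, the rest is a transcription of Theorem~\ref{primary propagation} with a strictly simpler inhomogeneity term.
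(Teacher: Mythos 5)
Your proposal is correct and follows essentially the same route as the paper: the paper proves Theorem \ref{take two} by noting (via \eqref{Hamiltonian vf} and \eqref{cancellation}) that when $p_0=p_0(\xi)$ is smooth in $\xi$ the unrescaled $H_{p_0}$ is already tangent to $SN(\mathcal{C})$, so the flow-box coordinates and the positive-commutator argument of Theorem \ref{primary propagation} apply verbatim, with the $O_{L^2}(h^\infty)$ inhomogeneity making the forcing terms trivial. You have simply written out in more detail what the paper states as a sketch, so there is nothing to correct.
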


The key step in the proof of \autoref{primary propagation} is the ability to choose coordinates $\rho_j$ satisfying (i) $\overline{H}_{p_0} = \partial_{\rho_1}$ and (ii) $SN(\mathcal{C}) = \{\rho_{2n}=0\}$.  This is possible precisely because $\overline{H}_{p_0} = \rho_\mathrm{ff}^{l-m+1} H_\mathrm{p_0}$ is tangent to $SN (\mathcal{C})$.  Thus, if we assume $p_0 = p_0(\xi)$ (and choose not to rescale), then $H_\mathrm{p_0}$ is again tangent to $SN(\mathcal{C})$, and this choice of coordinates is again possible.

\begin{remark}
(1) For \emph{any} $p_0\in C^\infty(S_\mathrm{pr})$ (whether or not it is smooth `downstairs', whether or not it is independent of $x$), we may decompose $H_{p_0}$ as follows:
\begin{equation} \label{vf decomposition}
H_{p_0} = \vec{V}_1 + \frac{1}{\rho_\mathrm{ff}} \vec{V}_2,
\end{equation}
where $\vec{V}_1$ and $\vec{V}_2$ are each smooth up to the boundary $SN(\mathcal{C}) = \{\rho_\mathrm{ff} = 0\}$.  This decomposition is not canonical, however.  To see this, notice that we may further decompose $\vec{V}_2$ as $\vec{V}_2 = \vec{Y}_1 + \rho_\mathrm{ff} \vec{Y}_2$.  Thus,
\begin{equation} \label{vf decomposition 2}
H_{p_0} = (\vec{V}_1 + \vec{Y}_2) + \rho_\mathrm{ff}^{-1} \vec{Y}_1,
\end{equation}
where $\vec{V}_1 + \vec{Y}_2$, $\vec{Y}_1$ are each smooth up to $SN(\mathcal{C})$.  While the role of $\vec{V}_2$ is therefore not unique, we always have
\[
{\vec{V}_2}|_{SN(\mathcal{C})} = {\vec{Y}_1}|_{SN(\mathcal{C})} = (\rho_\mathrm{ff} H_{p_0})|_{SN(\mathcal{C})}.
\]

(2)
If we rescale: in \autoref{primary propagation}, second wavefront is propagating along $\rho_\mathrm{ff} H_{p_0} = \rho_\mathrm{ff} \vec{V}_1 + \vec{V}_2$.
(If we just wanted to prove propagation away from the boundary of $S_\mathrm{pr}$, there was no need to rescale at all.  Away from $SN(\mathcal{C})$, rescaling merely reparametrizes the same flow lines.)

(3) If instead we assume $p_0 = p_0(\xi)$ and smoothness in $\xi$, then as we have showed, $H_{p_0}$ need not be rescaled.
Under these assumptions, in the example,
\[
\vec{Y}_1 = \left(\frac{\partial p_0}{\partial x_1} \sin\theta - \frac{\partial p_0}{\partial x_2} \cos\theta\right) \frac{\partial}{\partial\theta}, \ \ \vec{Y}_2 = \left(\cos\theta\frac{\partial p_0}{\partial\xi_2} - \sin\theta\frac{\partial p_0}{\partial\xi_1}\right)\left(\cos\theta\frac{\partial}{\partial x_2} - \sin\theta\frac{\partial}{\partial x_1}\right).
\]
And as we see, this $\vec{Y}_1$ is zero when $p_0 = p_0(\xi)$, so the problematic term $\vec{Y}_1 / {\rho_\mathrm{ff}}$ in \eqref{vf decomposition 2} drops out.  So then $H_{p_0} = \vec{V}_1 + \vec{Y}_2$, and \autoref{take two} gives propagation along this vector field.



\end{remark}

\subsection{Secondary propagation of coisotropic wavefront\label{secondary propagation subsection}}

Let $P\in\widetilde{\Psi}^{0}_{h}(\mathbb{T}^n)$ have real valued principal symbol $p_0$.  Recall from Remark \ref{one calculus in the other} that $P$ can be regarded as a second microlocal operator: $P\in\widetilde{\Psi}^{0}_{h}(\mathbb{T}^n)\subset\Psi^{0,0}_{2,h}(\mathcal{C})$.
We will abuse notation and refer to the second principal symbol of $P$ also as $p_0$.

The linear coisotropic $\mathcal{C}$ is given by
\[
\mathbb{T}^n_x \times \{\mathbf{v}_1 \cdot \xi = \ldots = \mathbf{v}_d \cdot \xi = 0\}.
\]
for linearly independent $\{\mathbf{v}_1,\ldots,\mathbf{v}_d\}\subset\mathbb{R}^n$.  Let $\widetilde{\xi}' = (\mathbf{v}_1 \cdot \xi,\ldots,\mathbf{v}_d \cdot \xi)$.  We extend $\{\mathbf{v}_1,\ldots,\mathbf{v}_d\}$ to a basis
\[
\{\mathbf{v}_1,\ldots,\mathbf{v}_d,\mathbf{w}_{d+1},\ldots,\mathbf{w}_n\}
\]
for $\mathbb{R}^n$, as before.  Let $\widetilde{\xi}'' = (\mathbf{w}_{d+1} \cdot \xi,\ldots,\mathbf{w}_n \cdot \xi)$ and $\widetilde{\xi} = (\widetilde{\xi}',\widetilde{\xi}'')$.  Locally, we may define $\widetilde{x}'$, $\widetilde{x}''$, and $\widetilde{x}$ analogously.  In the coordinates $(\widetilde{x},\widetilde{\xi})$, $\mathcal{C} = \mathbb{T}^n_{\widetilde{x}} \times \{\widetilde{\xi}' = 0\}$.

Further assume that $p_0$ is a function only of $\widetilde{\xi}$, i.e., is independent of $\widetilde{x}$.  Hence, $H_{p_0}$ is tangent to $\mathcal{C}$.  Set $\rho_\mathrm{ff} := |\widetilde{\xi}'|$, and $\Gamma' := \widetilde{\xi}' / |\widetilde{\xi}'|$. As the notation suggests, $\rho_\mathrm{ff}$ is a front face defining function: $\partial S_\mathrm{pr} = SN(\mathcal{C}) = \{\rho_\mathrm{ff} = 0\}$.

For our next result, we impose a further condition, on the subprincipal symbol of $P$.  Locally, we may write $P = {}^h \mathrm{Op_W}(p)$, where the total symbol $p$ decomposes as
\[
p = p_0 + h\mathrm{sub}(P) + O(h^2).
\]
(We specified the Weyl quantization here, so that the subprincipal symbol is the $O(h)$ term of the total symbol.) We assume that $\mathrm{sub}(P) \equiv 0$.

We remarked earlier that $P$ can be regarded as an element of $\Psi^{0,0}_{2,h}(\mathcal{C})$.  We can Taylor expand $p_0$ at $\mathcal{C}$, partially in the characteristic variables $\widetilde{\xi'}$, to obtain
\[
{p_0}\restriction_\mathcal{C}\left(\widetilde{\xi}''\right) + \left.\frac{\partial p_0}{\partial\widetilde{\xi}'}\right|_\mathcal{C}\left(\widetilde{\xi}''\right)\cdot\widetilde{\xi}' + \frac{1}{2}\left.\frac{\partial^2 p_0}{{\partial\widetilde{\xi}'}^2}\right|_\mathcal{C}\left(\widetilde{\xi}''\right) \cdot \left(\widetilde{\xi}'\right)^2 + O\left(\left(\widetilde{\xi}'\right)^3\right).
\]
Note that $p_0\restriction_\mathcal{C}\left(\widetilde{\xi}''\right) = p_0\left(0,\widetilde{\xi}''\right)$.
The corresponding Hamiltonian vector field, computed term-by-term, is
\small
\[
H_{p_0} = \frac{\partial {p_0}\restriction_\mathcal{C}}{\partial\widetilde{\xi}''}\cdot\partial_{\widetilde{x}''} + \left[\left(\left.\frac{\partial^2 p_0}{\partial\widetilde{\xi}'\partial\widetilde{\xi}''}\right|_\mathcal{C}\left(\widetilde{\xi}''\right)\right) \partial_{\widetilde{x}''}\right]\cdot\widetilde{\xi}' + \left.\frac{\partial p_0}{\partial\widetilde{\xi}'}\right|_\mathcal{C}\left(\widetilde{\xi}''\right)\cdot\partial_{\widetilde{x}'} + \left[\left(\left.\frac{\partial^2 p_0}{{\partial\widetilde{\xi}'}^2}\right|_\mathcal{C}\left(\widetilde{\xi}''\right)\right) \widetilde{\xi}'\right]\cdot\partial_{\widetilde{x}'} + O\left(\left(\widetilde{\xi}'\right)^2\right).
\]
\normalsize
In the coordinates of $S_\mathrm{pr}$, $H_{p_0}$ is lifted to
\begin{align}
\mathbf{H} &= \left.\frac{\partial p_0}{\partial\widetilde{\xi}''}\right|_{SN(\mathcal{C})}\left(\widetilde{\xi}''\right)\cdot\partial_{\widetilde{x}''} + \left.\frac{\partial p_0}{\partial\widetilde{\xi}'}\right|_{SN(\mathcal{C})}\left(\widetilde{\xi}''\right)\cdot\partial_{\widetilde{x}'} + \label{Hamilton} \\
&+ \rho_\mathrm{ff}\left(\left[\left(\left.\frac{\partial^2 p_0}{\partial\widetilde{\xi}'\partial\widetilde{\xi}''}\right|_{SN(\mathcal{C})}\left(\widetilde{\xi}''\right)\right) \partial_{\widetilde{x}''}\right]\cdot\Gamma' + \left[\left(\left.\frac{\partial^2 p_0}{{\partial\widetilde{\xi}'}^2}\right|_{SN(\mathcal{C})}\left(\widetilde{\xi}''\right)\right) \Gamma'\right]\cdot\partial_{\widetilde{x}'}\right) + \rho_\mathrm{ff}^2 H'. \nonumber
\end{align}
$H'$ is tangent to $SN (\mathcal{C}) = \{\rho_\mathrm{ff} = 0\}$, as it does not contain $\partial_{\widetilde{\xi}}$.  Let $H_1$ refer to the sum of the first two terms of $\mathbf{H}$, and $H_2$ to the $O(\rho_\mathrm{ff})$-piece of $\mathbf{H}$ (i.e., $H_2$ is the next order jets to $H_1$).  Before stating the theorem, we give an example.


\begin{example} \label{ex:symbol class}
Suppose $a\in C^\infty_c(S_\mathrm{tot})$ for the coisotropic $\{\mathbf{v}_1 \cdot \xi = \mathbf{v}_2 \cdot \xi = 0\} \subset T^*\mathbb{T}^3$.  For later use, we explicitly determine the symbol class to which $h^{|\alpha|+|\beta|-1}\left(\partial^\alpha_{\widetilde{\xi}} \partial^\beta_{\widetilde{x}} a\right)$ belongs.  We employ the following coordinates in one coordinate patch of $S_\mathrm{tot}$: $\widetilde{x}$, $H = h / {\widetilde{\xi}_1}$, $\widetilde{\xi}_1$, $\Xi_2 = \widetilde{\xi}_2 / \widetilde{\xi}_1$, and $\widetilde{\xi}_3 = \mathbf{w}_3 \cdot \xi$.  $H$ is a defining function for the side face, $\widetilde{\xi}_1$ for the front face.  We compute $h\partial_{\widetilde{\xi}_1} = H(\widetilde{\xi}_1\partial_{\widetilde{\xi}_1} - H\partial_{\Xi_2} - H\partial_H)$.  The lifted vector field in parentheses is tangent to both side and front faces.  We also have $h\partial_{\widetilde{\xi}_2} = H\partial_{\Xi_2}$. Therefore,
\[
h^{|\alpha|+|\beta|-1}\left(\partial^\alpha_{\widetilde{\xi}} \partial^\beta_{\widetilde{x}} a\right) \in S^{1-|\alpha|-|\beta|,1-|\beta|-\alpha_3}(S_\mathrm{tot}).
\]
More generally, for $\{\mathbf{v}_1 \cdot \xi = \ldots = \mathbf{v}_d \cdot \xi = 0\}\subset T^*\mathbb{T}^n$,
\[
h^{|\alpha|+|\beta|-1}\left(\partial^\alpha_{\widetilde{\xi}} \partial^\beta_{\widetilde{x}} a\right)\in S^{1-|\alpha|-|\beta|,1-|\beta|-(\alpha_{d+1}+\ldots+\alpha_n)}(S_\mathrm{tot}).
\]
\end{example}

\begin{theorem} \label{secondary propagation}
Let $\mathcal{C}$ be a linear coisotropic submanifold.  Assume $P\in\widetilde{\Psi}^0_h(\mathbb{T}^n)$ has real valued principal symbol depending only on the fiber variables in $T^* \mathbb{T}^n$, and subprincipal symbol identically equal to zero.  Let $u \in L^2(\mathbb{T}^n)$ satisfy $Pu = O_{L^2(\mathbb{T}^n)}(h^\infty)$.  Then for all $l \leq 0$, ${}^2 \mathrm{WF}^{\infty,l}(u) \cap SN(\mathcal{C})$ propagates along the flow of $H_1$; and for all $l \leq -1$, ${}^2 \mathrm{WF}^{\infty,l}(u) \cap SN(\mathcal{C})$ propagates along the flow of both $H_1$ and $H_2$.
\end{theorem}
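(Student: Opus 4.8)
The plan is to treat the two vector fields separately, getting $H_1$ essentially for free and $H_2$ by a second-jet positive commutator argument.

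\emph{Propagation along $H_1$.} Since $p_0=p_0(\xi)$ is smooth on $T^*\mathbb{T}^n$ and independent of $x$, the field $H_{p_0}$ is already tangent to $SN(\mathcal{C})$ without rescaling (compare the discussion around \eqref{Hamilton}), and its restriction to $SN(\mathcal{C})$ is precisely $H_1$. For $l\leq 0$ one has $u\in L^2(\mathbb{T}^n)=I^0_{(0)}(\mathcal{C})\subset I^{-\infty}_{(l)}(\mathcal{C})$, and from $Pu=O_{L^2}(h^\infty)$ it follows that $f:=Pu$ lies in $I^\infty_{(l)}(\mathcal{C})$ (apply the $L^2$-bounded generators of $\mathfrak{M}_\mathcal{C}$), so ${}^2\mathrm{WF}^{\bullet,l}(f)=\emptyset$. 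Thus Theorem \ref{take two} applies at each order $k$, and taking the closure of the union over $k$ (flows commute with closure) shows ${}^2\mathrm{WF}^{\infty,l}(u)$ is $H_{p_0}$-invariant; intersecting with the $H_{p_0}$-invariant manifold $SN(\mathcal{C})$ gives invariance of ${}^2\mathrm{WF}^{\infty,l}(u)\cap SN(\mathcal{C})$ under $H_1$. This is the part that only needs $l\leq 0$.

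\emph{Propagation along $H_2$.} Here I would run a positive commutator argument in the style of Theorem \ref{primary propagation}, but built on the \emph{second} jet of $\mathbf{H}$ from \eqref{Hamilton}, $\mathbf{H}=H_1+\rho_\mathrm{ff}H_2+\rho_\mathrm{ff}^2H'$. Fix a non-radial $q\in SN(\mathcal{C})$ and a short $H_2$-bicharacteristic segment $\gamma$ through it; by the first part I may assume ${}^2\mathrm{WF}^{\infty,l}(u)\cap SN(\mathcal{C})$ is $H_1$-invariant, and I bootstrap on the order $\alpha$ exactly as in Theorem \ref{primary propagation}, assuming absence of ${}^2\mathrm{WF}^{\alpha,l}(u)$ at the far end of $\gamma$. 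The commutant $A$ is chosen with real principal symbol $a$ supported in a $\lambda^{-1}$-tube around $\gamma$ transverse to $SN(\mathcal{C})$ — so $0\leq\rho_\mathrm{ff}\lesssim\lambda^{-1}$ on $\mathrm{supp}(a)$ — and, crucially, taken to be $H_1$-invariant in the base variables so that $H_1(a^2)$ contributes no new wavefront; because $p_0$ is $x$-independent, $H_1$ and $H_2$ are constant-coefficient vector fields in the base, hence $[H_1,H_2]=0$ and $H_2$ descends to the $H_1$-quotient, where the localization can be performed. The extra factor of $\rho_\mathrm{ff}$ multiplying $H_2$ in \eqref{Hamilton} forces the $\rho_\mathrm{ff}$-order of $A$ to be shifted by a half-unit relative to Theorem \ref{primary propagation}, costing one unit of the grading; this is where the hypothesis $l\leq -1$ enters. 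On $\mathrm{supp}(a)$ I then write $\rho_\mathrm{ff}H_2(a^2)=b^2-e^2+(\text{terms with a positive power of }\rho_\mathrm{sf}\text{, or supported off }{}^2\mathrm{WF}^{\alpha,l}(u))$, with $b$ elliptic along $\gamma$ and $e$ supported near the good endpoint, just as before.

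\emph{Error terms and the main obstacle.} Because $P$ is an honest semiclassical PsDO, its total symbol is smooth on the blown-down space $T^*\mathbb{T}^n$, so every term of the symbolic expansion of $-i(A^*AP-P^*A^*A)$ has the form $h^{-1}(\partial^\gamma_\xi p)\,(\partial^\gamma_x(\cdot))$ and, by the bookkeeping of Example \ref{ex:symbol class}, lands in an explicit class $S^{\bullet,\bullet}(S_\mathrm{tot})$; combined with $\rho_\mathrm{ff}\lesssim\lambda^{-1}$ on $\mathrm{supp}(a)$, the tail $\rho_\mathrm{ff}^2H'(a^2)$ is smaller by $\lambda^{-1}$ than the main term $\rho_\mathrm{ff}H_2(a^2)$ and may be absorbed, with cross terms handled via Lemma \ref{lem:bounding}; and the self-adjointness defect $P-P^*$ is $O(h^2)$ in the relevant grading since $p_0$ is real and $\mathrm{sub}(P)\equiv 0$, so it too is absorbable. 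Pairing the operator identity with $u$ and using $Pu=O_{L^2}(h^\infty)$ then yields $\|Bu\|_{L^2}<\infty$, and raising $\alpha$ in half-integer steps together with letting $\lambda\to\infty$ gives absence of ${}^2\mathrm{WF}^{\infty,l}(u)$ along $\gamma$. I expect the genuinely delicate step to be the $H_1(a^2)$ contribution — the interaction of the two commuting flows, i.e.\ actually producing a commutant that is $H_1$-invariant (hence localizable only after the quotient reduction above) yet still detects $\gamma$ — together with the verification that $H'$ and the $O(h^2)$-part of $P$ are truly negligible; this last point is exactly where the smoothness of $P$ on the blown-down space is indispensable, since for a generic element of $\Psi_{2,h}(\mathcal{C})$ the field $\mathbf{H}$ would carry a $\rho_\mathrm{ff}^{-1}$ term as in \eqref{vf decomposition}--\eqref{vf decomposition 2} and no clean second jet $H_2$ would exist.
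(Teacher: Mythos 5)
Your $H_1$ part is fine and is exactly what the paper does (quote \autoref{take two} at each order and take closures). For the $H_2$ part your overall strategy -- a positive commutator built on the second jet of $\mathbf{H}$, with an $H_1$-invariant commutant, $[H_1,\widetilde H_2]=0$, and crucial use of the smoothness of $p$ on the blown-down space and $\mathrm{sub}(P)\equiv 0$ -- is the right one, but the two places you defer are precisely where the content lies, and as written they do not go through. First, the commutant: you propose a H\"ormander-style symbol localized in a $\lambda^{-1}$-tube around an $H_2$-segment ``after passing to the $H_1$-quotient,'' but the $H_1$-orbit closures in $SN(\mathcal{C})$ are translates of subtori of varying dimension (often dense in the fiber torus), so the quotient is not a manifold and the $\chi_0,\chi$ construction of \autoref{primary propagation} cannot simply be transplanted; you flag this as ``the delicate step'' but never resolve it. The paper's resolution is a different construction: since $\zeta\notin{}^2\mathrm{WF}^{m,l}(u)$ and this set is $H_1$-invariant, an entire neighborhood of the orbit closure $\overline{\mathcal{O}}_{H_1}(\zeta)$ is free of ${}^2\mathrm{WF}^{m,l}(u)$; one chooses $a_0\geq 0$ with $H_1(a_0)=0$ supported there and sets $a_1(p)=-\int_0^\delta(1-s)\,a_0\bigl(\exp(-s\widetilde H_2)p\bigr)\,ds$, $\widetilde a_1=\rho_\mathrm{ff}^{2m-2l-1}a_1$. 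Differentiating the weight gives $\rho_\mathrm{ff}\widetilde H_2(\widetilde a_1)=\int_0^\delta b_s^2\,ds+c^2-d^2$, where the negative term $d^2$ sits exactly on the region of known regularity; no monotone cutoff along the flow is needed, $H_1(\widetilde a_1)=0$ holds by construction, and the tail $\rho_\mathrm{ff}^2H'$ you wanted to ``absorb via $\lambda^{-1}$'' is simply folded into $\widetilde H_2:=\rho_\mathrm{ff}^{-1}(\mathbf{H}-H_1)$, which agrees with $H_2$ on $SN(\mathcal{C})$.

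Second, and more seriously, your error analysis misidentifies both the difficulty and the role of $l\leq-1$. The subprincipal-level error in the commutator identity is a priori an element of $\Psi^{2m-1,2l+2}_{2,h}(\mathcal{C})$: lower order semiclassically, but \emph{two orders worse at the front face} than anything you can pair with $u\in L^2$. Support in $\{\rho_\mathrm{ff}\lesssim\lambda^{-1}\}$ does not help -- $\lambda$ is fixed as $h\downarrow 0$, and a negative front-face order is growth, not smallness; Lemma \ref{lem:bounding} only applies to residual operators, so it cannot be invoked here either. The paper needs a dedicated lemma decomposing this error as $L=L_1+L_2$ with $L_1\in\Psi^{2m-1,2l}_{2,h}(\mathcal{C})$ microsupported in ${}^2\mathrm{WF}_{2l+1}'(A)$ (hence pairable with $u$ by the inductive hypothesis at order $m-1/2$) and $L_2\in\Re^{2l+2}$; its proof is exactly where $p$ smooth downstairs, $p_0=p_0(\xi)$, and $\mathrm{sub}(P)\equiv 0$ are used quantitatively, via $\partial^\gamma_{\widetilde x}p=O(\rho_\mathrm{sf}^2\rho_\mathrm{ff}^2)$, the bookkeeping of Example \ref{ex:symbol class}, and the Taylor expansion of $p_0$ at $\mathcal{C}$. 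The hypothesis $l\leq-1$ enters precisely to make $L_2u\in L^2$, i.e.\ $2l+2\leq 0$ -- not, as you claim, because the factor $\rho_\mathrm{ff}$ in front of $H_2$ shifts the grading of the commutant (that is harmless: one just takes $A\in\Psi^{2m,2l+1}_{2,h}(\mathcal{C})$). Without this decomposition the pairing $\langle Lu,u\rangle$ is uncontrolled and your induction stalls at the first step.
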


Since $u \in L^2(\mathbb{T}^n) = I^0_{(0)}(\mathcal{C})$ and $I^0_{(0)}(\mathcal{C})\subset I^0_{(l)}(\mathcal{C})$ for $l\leq 0$, it makes sense to consider the $m,l$-wavefront set of $u$ for any $m\in\mathbb{R}$.

\begin{proof}
The vector field $H_1$ is the same as $(H_{p_0})|_{SN(\mathcal{C})}$ of \autoref{take two} in the case $m = l = 0$.  Therefore, we obtain $H_1$ invariance by simply quoting \autoref{take two}.

Since the coordinates $(\widetilde{x},\widetilde{\xi})$ are valid locally in a neighborhood of $\mathcal{C}$, we may extend the vector field $H_1$ to a neighborhood of $SN(\mathcal{C})$.  For $\epsilon > 0$, we extend $H_1$ to $N := \{\rho_\mathrm{ff} = |\widetilde{\xi}'| < \epsilon\}$.  Let $\widetilde{H}_2$ be defined on $N$ by $\widetilde{H}_2 := \rho_\mathrm{ff}^{-1}(\mathbf{H} - H_1)$.  We see that $\widetilde{H}_2$ coincides with $H_2$ on $SN(\mathcal{C})$.  Since the principal symbol of $P$ is independent of $\widetilde{x}$, then $\widetilde{\xi}$ is constant along the flows of $H_1$ and $\widetilde{H}_2$, so the $(H_1,\widetilde{H}_2)$ joint flow from $N$ stays in this neighborhood of $SN(\mathcal{C})$ for all times.

Next, there exists $m_0\in\mathbb{R}$ for which ${}^2 \mathrm{WF}^{m_0,l}(u) = \emptyset$.  For some $m > m_0$, assume ${}^2 \mathrm{WF}^{m - 1/2,l}(u)$ is invariant under the $\widetilde{H}_2$ flow.  Take any $\zeta \in SN(\mathcal{C})$ and suppose that $\zeta \notin {}^2 \mathrm{WF}^{m,l}(u)$.  We seek to prove that (the closure of) the $\widetilde{H}_2$ orbit through $\zeta$ is disjoint from ${}^2 \mathrm{WF}^{m,l}(u)$.  This argument may then be iterated to show $\widetilde{H}_2$ invariance of ${}^2 \mathrm{WF}^{\infty,l}(u)$.  Let $\overline{\mathcal{O}}_{H_1}(\zeta)$ refer to (the closure of) the $H_1$ orbit through $\zeta$; due to $H_1$-invariance, we know that $\overline{\mathcal{O}}_{H_1}(\zeta) \cap {}^2 \mathrm{WF}^{m,l}(u)=\emptyset$.  Let $\overline{\mathcal{O}}_{\widetilde{H}_2}(\zeta)$ refer to (the closure of) the $\widetilde{H}_2$ orbit through $\zeta$; since $\zeta \notin {}^2 \mathrm{WF}^{m - 1/2,l}(u)$, we know $\overline{\mathcal{O}}_{\widetilde{H}_2}(\zeta) \cap {}^2 \mathrm{WF}^{m - 1/2,l}(u)=\emptyset$.

Since $\zeta\in\{\rho_\mathrm{ff} = 0\}$, $\rho_\mathrm{ff} = |\widetilde{\xi}'|$, and $\widetilde{\xi}$ is constant on $H_1$ orbits, the entire $H_1$ orbit containing $\zeta$ lies in $SN(\mathcal{C})$.  Since $\overline{\mathcal{O}}_{H_1}(\zeta) \cap {}^2 \mathrm{WF}^{m,l}(u)=\emptyset$, there exists a neighborhood of this orbit closure in $S_\mathrm{pr}$ that is disjoint from ${}^2 \mathrm{WF}^{m,l}(u)$.  Choose nonnegative $a_0\in C^\infty(S_\mathrm{pr})$ whose support contains $\overline{\mathcal{O}}_{H_1}(\zeta)$, whose support is contained in this neighborhood (so is disjoint from ${}^2 \mathrm{WF}^{m,l}(u)$), whose support is in $N$ (see previous paragraph), and such that $H_1(a_0) = 0$.  Note that in particular, $\mathrm{supp}(a_0)$ is disjoint from ${}^2 \mathrm{WF}^{m - 1/2,l}(u)$.

Fix $\delta \in (0,1)$.  We consider $\widetilde{H}_2$ flow segments with one endpoint $\exp\left(0 \widetilde{H}_2\right)$ on the $H_1$ orbit through $\zeta$ (so one of these segments has endpoint $\zeta$ itself) and other endpoint $\exp\left(-\delta \widetilde{H}_2\right)$.  Informally, we are taking the $H_1$ orbit passing through $\zeta$ and ``smearing'' it along the $\widetilde{H}_2$ flow; see Figure \ref{fig:smear}.

Next, for $p \in N$, put
\[
a_1(p) := -\int^\delta_0 (1 - s) a_0\left(\exp\left(-s \widetilde{H}_2\right) p \right) ds.
\]
Away from $N$, we are certainly off the support of $a_0$, so we may define $a_1$ to be zero on $S_\mathrm{pr} \backslash N$; then $a_1\in C^\infty(S_\mathrm{pr})$.  We are most interested in $p\in\overline{\mathcal{O}}_{H_1}(\zeta)\subset SN (\mathcal{C})$.  We have
\begin{equation} \label{eqn:symbol support}
\mathrm{supp}(a_1)\supseteq \{\exp(-s \widetilde{H}_2) p \ | \ 0\leq s\leq\delta, \ p\in\overline{\mathcal{O}}_{H_1}(\zeta)\}.
\end{equation}


Since $\mathrm{supp}(a_0) \cap {}^2 \mathrm{WF}^{m - 1/2,l}(u) = \emptyset$ and ${}^2 \mathrm{WF}^{m - 1/2,l}(u)$ is $\widetilde{H}_2$ invariant, we have $\mathrm{supp}(a_1) \cap {}^2 \mathrm{WF}^{m - 1/2,l}(u) = \emptyset$.

\begin{figure}[htb]
 \centering
 \def\svgwidth{300pt}
 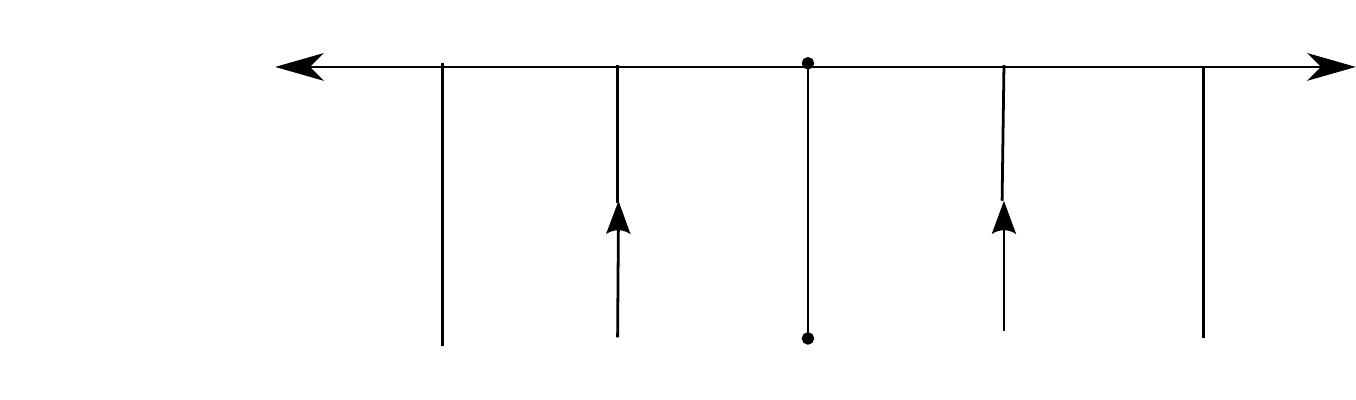
 \caption[Time $\delta$ backwards $\widetilde{H}_2$-flowout]{Time $\delta$ backwards $\widetilde{H}_2$ flowout to $H_1$ orbit through $\zeta$}
 \label{fig:smear}
\end{figure}

Then define $\widetilde{a}_1 := \rho_\mathrm{ff}^{-(2l+1)+2m} a_1 \in S^{2l+1-2m}(S_\mathrm{pr})$.  We compute
\begin{align*}
\rho_\mathrm{ff} \widetilde{H}_2(\widetilde{a}_1) &= \rho_\mathrm{ff}^{-2l+2m} \widetilde{H}_2(a_1) = \rho_\mathrm{ff}^{-2l+2m} \int^\delta_0 (1-s) \frac{\partial}{\partial s}\left[a_0\left(\exp\left(-s \widetilde{H}_2\right)\right)\right] ds \\
&= \rho_\mathrm{ff}^{-2l+2m} \int^\delta_0 a_0\left(\exp\left(-s \widetilde{H}_2\right)\right) ds + (1-\delta) \rho_\mathrm{ff}^{-2l+2m} a_0\left(\exp\left(-\delta \widetilde{H}_2\right)\right) - \rho_\mathrm{ff}^{-2l+2m} a_0 \\
&=: \int^\delta_0 b^2_s \ ds + c^2 - d^2.
\end{align*}
We have $[H_1,\widetilde{H}_2]=0$.  This, combined with $H_1(a_0)=0$, implies $H_1(a_1)=0$, which in turn implies $H_1(\widetilde{a}_1)=0$.

Choose $A\in\Psi^{2m,2l+1}_{2,h}(\mathcal{C})$ with principal symbol $\widetilde{a}_1$, such that ${}^2 \mathrm{WF}_{2l+1}'(A) = \mathrm{supp}(\widetilde{a}_1)$.  Locally, we may write $A := {}^h \mathrm{Op_W}(a)$ for $a\in S^{2m,2l+1}(S_\mathrm{tot})$.  Assume further that $a$ is real valued, hence $A$ is formally self-adjoint (and actually self-adjoint for $m\leq 0$, $l\leq -1$).  We also have $P = {}^h \mathrm{Op_W}(p)$ locally.  Since in particular $P\in\widetilde{\Psi}^0_h(\mathbb{T}^n)$, $p$ is smooth on the blown down space $T^*\mathbb{T}^n \times [0,1)_h$; this is not true of $a$.

We ``commute'' $h^{-1} P\in\Psi^{1,1}_{2,h}(\mathcal{C})$ with $A$ to obtain
\small
\[
i((h^{-1}P)^* A - A(h^{-1}P)) \sim {}^h \mathrm{Op_W}\left(\sum_{\alpha,\beta}\frac{ih^{-1} (-1)^{|\alpha|}}{(2i)^{|\alpha + \beta|}\alpha!\beta!}\left((\partial^\alpha_{\widetilde{x}} \partial^\beta_{\widetilde{\xi}} \bar{p})((h \partial_{\widetilde{\xi}})^\alpha \partial^\beta_{\widetilde{x}} a)-(\partial^\alpha_{\widetilde{x}} (h \partial_{\widetilde{\xi}})^\beta a)(\partial^\alpha_{\widetilde{\xi}} \partial^\beta_{\widetilde{x}} p)\right)\right) + R,
\]
\normalsize
where $R\in\Re^{2l+2}$.  This is the Weyl formula for the total symbol of a composition (cf.\ \cite[Section~2.7]{Ma}).

``Slicing'' this sum ($|\alpha + \beta|=0$, $|\alpha + \beta|=1$, $|\alpha + \beta|=2$, etc.) and examining each ``slice'' separately, we determine that
\[
{}^2 \sigma_{2m,2l+2}(i((h^{-1}P)^* A - A(h^{-1}P))) = \mathbf{H}(\widetilde{a}_1) + 2 \widetilde{a}_1 \mathrm{Im}(\mathrm{sub}(P)).
\]
(Note that the $2m+1,2l+2$-principal symbol of the ``commutator'' vanishes.)  The first summand $\mathbf{H}(\widetilde{a}_1)$ arises from the ``slice'' $|\alpha + \beta|=1$ by replacing $p$ with its real valued principal part $p_0$
and $a$ with $\widetilde{a}_1$.  The latter summand comes from the ``slice'' $\alpha=\beta=0$, replacing $p$ with $h\mathrm{sub}(P)$ and $a$ with $\widetilde{a}_1$.  Now, since $H_1(\widetilde{a}_1) = 0$, and since we assumed $\mathrm{sub}(P)\equiv 0$, we conclude that
\[
{}^2 \sigma_\mathrm{2m,2l+2}(i((h^{-1}P)^* A - A(h^{-1}P))) = \rho_\mathrm{ff} \widetilde{H}_2(\widetilde{a}_1).
\]
Thus, the $2m,2l+2$-principal symbol of the ``commutator'' vanishes to first order at $SN(\mathcal{C})$.

Let $B_s$, $C$, $D\in\Psi^{m,l}_{2,h}(\mathcal{C})$ have principal symbols $b_s$, $c$, $d\in S^{l-m}(S_\mathrm{pr})$, respectively, from earlier; and such that ${}^2 \mathrm{WF}_{l}'(B_s) = \mathrm{supp}(b_s)$, and likewise for $C,D$.


Thus, since $\int^\delta_0 B_s^* B_s \ ds + C^* C - D^* D$ and $i((h^{-1}P)^* A - A(h^{-1}P))$ share the same principal symbol, we must have
\begin{equation} \label{remainder 2}
i\left((h^{-1}P)^* A - A(h^{-1}P)\right) = \int^\delta_0 B_s^* B_s \ ds + C^* C - D^* D + L,
\end{equation}
where $L\in\Psi^{2m-1,2l+2}_{2,h}(\mathcal{C})$ satisfies ${}^2 \mathrm{WF}_{2l+2}'(L) \subset {}^2 \mathrm{WF}_{2l+1}'(A)$.  Note that $D$ is microsupported along the $H_1$ orbit containing $\zeta$, at which we have absence of ${}^2 \mathrm{WF}^{m,l}(u)$; and that $C$ lives at the opposite ends of the $H_2$ flow segments, so $C^* C$ is the term we wish to control.


Unfortunately, the decay of $L$ at the front face is two orders worse than that of $C^* C$.  To overcome this issue, we present the following lemma, to be followed by the end of the proof of the theorem.

\begin{lemma}[Decomposition of $L$]
$L\in\Psi^{2m-1,2l+2}_{2,h}(\mathcal{C})$ in Equation \eqref{remainder 2} may be decomposed as $L = L_1 + L_2$, where $L_1\in\Psi^{2m-1,2l}_{2,h}(\mathcal{C})$ and $L_2\in\Re^{2l+2}$.
\end{lemma}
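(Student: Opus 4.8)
The plan is to trace back where the extra order of growth at the front face in $L$ comes from, and to show that the offending contributions are themselves residual. Recall that $L$ is defined, via the short exact sequence for second principal symbols, as the difference
\[
L = i\left((h^{-1}P)^* A - A(h^{-1}P)\right) - \left(\int^\delta_0 B_s^* B_s \ ds + C^* C - D^* D\right),
\]
and we already know its $2m,2l+2$-principal symbol vanishes, which is why $L\in\Psi^{2m-1,2l+2}_{2,h}(\mathcal{C})$ rather than $\Psi^{2m,2l+2}_{2,h}(\mathcal{C})$. The key structural fact to exploit is the one emphasized in the theorem statement and just before it: $P\in\widetilde\Psi^0_h(\mathbb{T}^n)$ has \emph{total symbol smooth on the blown down space} $T^*\mathbb{T}^n\times[0,1)_h$. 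Consequently, when we expand the Weyl symbol of $i((h^{-1}P)^*A - A(h^{-1}P))$ term by term in $|\alpha+\beta|$, every appearance of $p$ (or $\bar p$) is differentiated and then paired with derivatives of $a$; by Example \ref{ex:symbol class}, each factor $h^{|\alpha|+|\beta|-1}\partial^\alpha_{\widetilde\xi}\partial^\beta_{\widetilde x} a$ gains decay at the front face compared with the naive count. The point is that the ``slice'' $|\alpha+\beta|=1$ contributes exactly $\mathbf H(\widetilde a_1) = \rho_\mathrm{ff}\widetilde H_2(\widetilde a_1)$ after the cancellations from $H_1(\widetilde a_1)=0$ and $\mathrm{sub}(P)\equiv 0$, while the remaining slices $|\alpha+\beta|\geq 2$ — together with the genuinely residual remainder $R\in\Re^{2l+2}$ already isolated in the Weyl composition formula — are what produce $L$.

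First I would write $L = L' + R'$ where $R'\in\Re^{2l+2}$ collects the $\Re$-remainder from the Weyl composition formula (together with $D^*D$, $B_s^*B_s$, $C^*C$ corrections that arise because those operators were only specified up to their principal symbols — these discrepancies lie in $\Psi^{2m-1,2l}_{2,h}(\mathcal{C})$, so are actually harmless and can be absorbed into $L_1$, not $R'$). Then $L'$ is the operator whose Weyl total symbol is the sum over $|\alpha+\beta|\geq 2$ of the composition terms, evaluated with $p$ and $a = $ (Weyl symbol of $A$). For each such term, $p$ is hit by $\partial^\alpha_{\widetilde x}\partial^\beta_{\widetilde\xi}$ with $|\alpha+\beta|\geq 2$ — and crucially $p$ is smooth downstairs, so these derivatives are bounded smooth functions — while $a$ is hit by a derivative carrying a net factor of $h^{|\alpha+\beta|-2}$ after accounting for the overall $h^{-1}$ from $h^{-1}P$ and the $h^{|\alpha+\beta|}$ from the $(h\partial_{\widetilde\xi})$'s in the symbol calculus. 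Applying the symbol class count of Example \ref{ex:symbol class} to $a\in S^{2m,2l+1}(S_\mathrm{tot})$, each differentiation of $a$ in a characteristic $\widetilde\xi'$-direction improves front-face decay by one and side-face decay by one; since $|\alpha+\beta|\geq 2$, the resulting symbol lies in $S^{2m+2-|\alpha+\beta|,\,2l+1+(\text{something}\geq 0)}$ times the appropriate power of $h$, which after combining with $\rho_\mathrm{ff}\rho_\mathrm{sf}=h$ lands in $S^{2m-1,\,2l}(S_\mathrm{tot})$ or better. Thus ${}^h\mathrm{Op_W}$ of this piece lies in $\Psi^{2m-1,2l}_{2,h}(\mathcal{C})$, and we set $L_1$ to be this contribution (plus the lower-order corrections mentioned above) and $L_2 := R'\in\Re^{2l+2}$.

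The main obstacle I anticipate is the bookkeeping in the $|\alpha+\beta|=2$ slice: here the naive front-face order of the composition symbol is $2l+2$, matching that of $L$, and one must verify that the \emph{differentiations of $a$} — not of $p$ — are what carry the two ``characteristic'' derivatives needed to gain two orders of front-face decay. If instead both derivatives in some term fell on $p$ (giving $\partial^2_{\widetilde x} p$, with $a$ undifferentiated), that term would genuinely sit at order $2l+2$ and the decomposition would fail. The resolution is that in the composition formula the $\widetilde\xi$-derivatives always act on $a$ through the $(h\partial_{\widetilde\xi})^\alpha$ factors paired against $\widetilde x$-derivatives of the \emph{other} symbol, and vice versa, so in every term of the sum at least one of the two derivatives lands on $a$ as a $(h\partial_{\widetilde\xi})$-derivative; I would check this against the precise Weyl formula \cite[Section 2.7]{Ma} and, using $p = p(\widetilde\xi)$ (independent of $\widetilde x$), note that in fact \emph{all} $\widetilde x$-derivatives of $p$ vanish, so the only surviving terms at $|\alpha+\beta|=2$ have $\alpha=0$, $|\beta|=2$, meaning $a$ is differentiated twice in $\widetilde\xi$ — which by Example \ref{ex:symbol class} gives exactly the two orders of front-face decay we need. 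This observation, that $p_0$ independent of $\widetilde x$ kills precisely the dangerous terms, is the crux, and it is the same structural input that made $H_{p_0}$ tangent to $SN(\mathcal{C})$ in the first place.
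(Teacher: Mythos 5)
Your overall skeleton matches the paper's: peel off the genuine $\Re^{2l+2}$ remainder of the Weyl composition as $L_2$, show the remaining slices of the Weyl expansion quantize into $\Psi^{2m-1,2l}_{2,h}(\mathcal{C})$, and absorb the discrepancy coming from $B_s,C,D$ being fixed only through their principal symbols into $L_1$. But the step where you resolve your own ``main obstacle'' contains a genuine gap: you assert that since $p=p(\widetilde{\xi})$, \emph{all} $\widetilde{x}$-derivatives of $p$ vanish, so the dangerous terms at $|\alpha+\beta|=2$ disappear. That is false for the full Weyl symbol. Only the \emph{principal} symbol $p_0$ is independent of $\widetilde{x}$; the total symbol is $p = p_0(\widetilde{\xi}) + p_1$ with $p_1 = O(h^2)$ (and it is exactly here that $\mathrm{sub}(P)\equiv 0$ is used), and $p_1$ may depend on $\widetilde{x}$. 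The dangerous terms therefore do not vanish; they must be \emph{estimated}. The paper's proof does this by Taylor expanding \eqref{eqn:Taylor expansion} and observing $\partial^\gamma_{\widetilde{x}} p = O(h^2) = O(\rho_\mathrm{sf}^2\rho_\mathrm{ff}^2)$ for $\gamma\neq 0$, so that every term in which $p$ receives an $\widetilde{x}$-derivative gains two orders at the front face; that gain, not a vanishing, is what pushes the slices with $|\alpha|\leq 1$ (in the paper's case (2)) down to $S^{2m-1,2l}$ or better. Relatedly, you never treat the contributions of $p_1$ in the slices $|\alpha+\beta|\leq 1$ (the terms $h^{-1}a\,p_1$, $(\partial_{\widetilde{x}}p_1)(h\partial_{\widetilde{\xi}})a$, $(\partial_{\widetilde{\xi}_1}p_1)(\partial_{\widetilde{x}_1}a)$), which the paper checks separately land in $S^{2m-1,2l}$ or $S^{2m-2,2l}$; your plan implicitly assumes those slices produce exactly $\mathbf{H}(\widetilde{a}_1)$, which is only true after replacing $p$ by $p_0$.

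There is also a bookkeeping error in your stated mechanism for the front-face gain: you claim each differentiation of $a$ in a characteristic $\widetilde{\xi}'$-direction improves front-face decay by one, and that the surviving $|\alpha+\beta|=2$ terms have ``$a$ differentiated twice in $\widetilde{\xi}$,'' yielding the two needed orders. By Example \ref{ex:symbol class}, $h\partial_{\widetilde{\xi}_j}$ for $j\leq d$ gains only $\rho_\mathrm{sf}$ (side face), not $\rho_\mathrm{ff}$; front-face decay comes only from the non-characteristic $\widetilde{\xi}$-derivatives and from the $h$-factors accompanying $\widetilde{x}$-derivatives of $a$. Moreover, in the Weyl pairing the terms in which $p$ receives only $\widetilde{\xi}$-derivatives are those in which $a$ receives $\widetilde{x}$-derivatives, not $\widetilde{\xi}$-derivatives. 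So even granting your (false) vanishing claim, the stated reason for the two orders of front-face decay does not go through as written. The missing idea, and the heart of the paper's proof, is the splitting $p = p_0(\widetilde{\xi}) + p_1$ with $p_1 = O(h^2)$ and the conversion $h^2 = \rho_\mathrm{sf}^2\rho_\mathrm{ff}^2$, applied both to $\partial^\gamma_{\widetilde{x}}p$ for $\gamma\neq 0$ and to the low-order slices involving $p_1$.
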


\begin{proof}
We make full use of the assumption that $\mathrm{sub}(P)\equiv 0$.  We expand the total symbol $p$ (of $P = {}^h \mathrm{Op_W}(p)$) in powers of $h$:
\[
p(\widetilde{x},\widetilde{\xi}) = p_0(\widetilde{\xi}) + O\left(h^2\right) = p_0 + p_1,
\]
where $p_1$ signifies everything but the principal part of $P$.

Next, we again make use of the Weyl formula, in conjunction with equation \eqref{remainder 2}.  If we replace $p$ by $p_0$ in the asymptotic sum, and replace $a$ by $\widetilde{a}_1$, the term $\alpha=\beta=0$ directly cancels, and the term $|\alpha+\beta| = 1$ is quantized to give ${}^h \mathrm{Op_W}(\{p_0,\widetilde{a}_1\})\in\Psi^{2m,2l}_{2,h}(\mathcal{C})$.


This differs from $\int^\delta_0 B_s^* B_s \ ds + C^* C - D^* D$ by a remainder $\tilde{L}\in\Psi^{2m-1,2l}_{2,h}(\mathcal{C})$, since ${}^h \mathrm{Op_W}(\{p_0,\widetilde{a}_1\})$ and $\int^\delta_0 B_s^* B_s \ ds + C^* C - D^* D$ both have symbol $\mathbf{H}(\widetilde{a}_1) = \rho_\mathrm{ff} \widetilde{H}_2(\widetilde{a}_1)$.


Therefore, $L\in\Psi^{2m-1,2l+2}_{2,h}(\mathcal{C})$ is generated by quantizing all terms involving $p_1$, plus all terms with $|\alpha+\beta| > 1$ with $p_0$ in place of $p$, plus $\tilde{L}$ plus the residual operator $L_2 := R\in\Re^{2l+2}$.

First, we carefully study the terms in the Weyl expansion arising from $p=p_0 + p_1$ from all ``slices'' $|\alpha+\beta| > 1$.  To do this, we introduce local coordinates in one coordinate patch of $S_\mathrm{tot}$: $\widetilde{x}$, $\widetilde{\xi}_1$, $H := h / {\widetilde{\xi}_1}$, $\Xi_2 := \widetilde{\xi}_2 / \widetilde{\xi}_1,\ldots,\Xi_d := \widetilde{\xi}_d / \widetilde{\xi}_1$, and $\widetilde{\xi}''$.  As usual, $H$ is a defining function for the side face and $\widetilde{\xi}_1$ for the front face.  At this point, we use our earlier observation that $p$ is smooth on $T^*\mathbb{T}^n \times [0,1)$ (in particular $p$ is smooth in $\widetilde{\xi}'$, unlike $a$), so for all multi-indices $\gamma$ we have $\partial^\gamma_{\widetilde{\xi}} p = O(1)$ and $\partial^\gamma_{\widetilde{x}} p = O(1)$.  ``Slices'' for which $|\alpha+\beta| > 1$ satisfy (1) $|\alpha|\geq 2$ or (2) $|\beta|\geq 1$, $|\alpha|\leq |\beta|$ (these cases are not exclusive).  Example \ref{ex:symbol class} gives
\[
h^{|\alpha+\beta| - 1}\left(\partial^\alpha_{\widetilde{x}} \partial^\beta_{\widetilde{\xi}} a\right)\left(\partial^\alpha_{\widetilde{\xi}} \partial^\beta_{\widetilde{x}} p\right)\in S^{2m + 1-|\alpha+\beta|,2l+2-|\alpha|-(\beta_{d+1}+\ldots+\beta_n)}(S_\mathrm{tot}).
\]
If $\gamma\neq 0$, we can improve on $\partial^\gamma_{\widetilde{x}} p = O(1)$.  We have:
\begin{align}
p(\widetilde{x},\widetilde{\xi}) &= p_0(\widetilde{\xi}) + O\left(h^2\right) \nonumber \\
&= {p_0}\restriction_\mathcal{C}(\widetilde{\xi}'') + \rho_\mathrm{ff} \left[\frac{\partial p_0}{\partial\widetilde{\xi}'}\right]_\mathcal{C}(\widetilde{\xi}'')\cdot\Gamma' + \frac{\rho_\mathrm{ff}^2}{2} \left[\frac{\partial^2 p_0}{{\partial\widetilde{\xi}'}^2}\right]_\mathcal{C}(\widetilde{\xi}'')\cdot (\Gamma')^2 + O\left(\rho_\mathrm{ff}^3\right) + O\left(\rho_\mathrm{sf}^2 \rho_\mathrm{ff}^2\right), \label{eqn:Taylor expansion}
\end{align}
where $SN^+(\mathcal{C}\times\{0\}) = \{\rho_\mathrm{ff} = 0\}$.


The $O\left(\rho_\mathrm{ff}^3\right)$ term is the remainder in the Taylor expansion of the principal symbol, so it is independent of $\widetilde{x}$.  Then we differentiate \eqref{eqn:Taylor expansion} to get $\partial^\gamma_{\widetilde{x}} p = O\left(\rho^2_\mathrm{sf} \rho^2_\mathrm{ff}\right) = O\left(H^2 \widetilde{\xi}_1^2\right)$.  This implies, in case (2), that
\[
h^{|\alpha+\beta| - 1}\left(\partial^\alpha_{\widetilde{x}} \partial^\beta_{\widetilde{\xi}} a\right)\left(\partial^\alpha_{\widetilde{\xi}} \partial^\beta_{\widetilde{x}} p\right)\in S^{2m-1-|\alpha+\beta|,2l-|\alpha|-(\beta_{d+1}+\ldots+\beta_n)}(S_\mathrm{tot}).
\]
We are thus able to conclude that all terms of the form $h^{|\alpha+\beta| - 1}\left(\partial^\alpha_{\widetilde{x}} \partial^\beta_{\widetilde{\xi}} a\right)\left(\partial^\alpha_{\widetilde{\xi}} \partial^\beta_{\widetilde{x}} p\right)$ for which $|\alpha+\beta| > 1$ are $O\left(\rho_\mathrm{sf}^{-2m+1} \rho_\mathrm{ff}^{-2l}\right)$.

Finally, we consider the terms arising when $p = p_1 = O\left(\rho_\mathrm{sf}^2 \rho_\mathrm{ff}^2\right)$ and $|\alpha+\beta|\leq 1$.  If $\alpha=\beta=0$, it is easily seen that $h^{-1} a p_1\in S^{2m-1,2l}(S_\mathrm{tot})$.  Next suppose $\alpha=0$, $|\beta|=1$.  We assume the worst: one of the first $d$ components of $\beta$ is equal to one.  Then, since $\partial^\beta_{\widetilde{x}} p_1 = O\left(\rho_\mathrm{sf}^2 \rho_\mathrm{ff}^2\right)$ and $h^{-1}(h \partial_{\widetilde{\xi}})^\beta a\in S^{2m,2l+2}(S_\mathrm{tot})$, we have
\[
h^{-1} \left(\partial^\beta_{\widetilde{x}} p_1\right) \left(h\partial_{\widetilde{\xi}}\right)^\beta a\in S^{2m-2,2l}(S_\mathrm{tot}).
\]
The remaining case is $|\alpha|=1$, $\beta=0$.  We again assume the worst: $\alpha_1 = 1$.  Since $\partial_{\widetilde{\xi}_1} p_1 = O\left(H^2 \widetilde{\xi}_1\right)$ and $\partial_{\widetilde{x}_1} a\in S^{2m,2l+1}(S_\mathrm{tot})$, we have
\[
(\partial_{\widetilde{x}_1} a)(\partial_{\widetilde{\xi}_1} p_1)\in S^{2m-2,2l}(S_\mathrm{tot}),
\]
which is even better than we need.

A similar calculation holds if we interchange $\alpha$ and $\beta$ and take complex conjugates.  As a result, we may Borel sum, then quantize, then add on $\tilde{L}$ to obtain the desired operator $L_1\in\Psi^{2m-1,2l}_{2,h}(\mathcal{C})$.  This completes the proof of the lemma.
\end{proof}

First, using $A = A^*$, we find that
\[
\left\langle i((h^{-1}P)^* A - A(h^{-1}P))u,u \right\rangle_{L^2(\mathbb{T}^n)} = -2 \ \mathrm{Im}\left\langle Au,h^{-1}Pu \right\rangle_{L^2(\mathbb{T}^n)}.
\]
Then, applying the lemma to Equation \eqref{remainder 2},
\begin{equation} \label{eq:rearranged eqn}
\int^\delta_0 \|B_s u\|^2 ds + \|Cu\|^2 \leq \|Du\|^2 + \ |\left\langle L_1 u,u \right\rangle| \ + \ |\left\langle L_2 u,u \right\rangle| \ + 2 \ |\left\langle Au,h^{-1}Pu \right\rangle|.
\end{equation}

Since $u\in L^2(\mathbb{T}^n)$ (uniformly in $h$), in order to ensure $L_2 u\in L^2(\mathbb{T}^n)$, we need $2l+2\leq 0$, which holds if and only if $l\leq -1$.  This is exactly what we assumed.  Then by Cauchy--Schwarz,
\[
|\left\langle L_2 u,u \right\rangle_{L^2(\mathbb{T}^n)}| \leq \|L_2 u\|_{L^2(\mathbb{T}^n)} \|u\|_{L^2(\mathbb{T}^n)} < \infty.
\]
Since $D\in\Psi^{m,l}_{2,h}(\mathcal{C})$ is microsupported on the $H_1$ orbit through $\zeta$, $\zeta\notin {}^2 \mathrm{WF}^{m,l}(u)$, and we know $H_1$ invariance, then $Du\in L^2(\mathbb{T}^n)$.
By construction,
\[
{}^2 \mathrm{WF}_{2l+1}'(A) \cap {}^2 \mathrm{WF}^{m-1/2,l}(u) = \emptyset.
\]
Therefore, since
\[
{}^2 \mathrm{WF}_{2l}'(L_1) = {}^2 \mathrm{WF}_{2l+2}'(L) \subset {}^2 \mathrm{WF}_{2l+1}'(A),
\]
$L_1\in\Psi^{2m-1,2l}_{2,h}(\mathcal{C})$ satisfies $|\left\langle L_1 u,u \right\rangle| < \infty$.  Finally, the last remaining term is controlled by our assumption that $Pu = O_{L^2(\mathbb{T}^n)}(h^\infty)$.

Since the RHS of Equation \eqref{eq:rearranged eqn} is bounded (as $h\rightarrow 0$), each term on the LHS is bounded.  The boundedness of $\|Cu\|^2$ demonstrates absence of ${}^2 \mathrm{WF}^{m,l}(u)$ on the microsupport of $C$.  Since $C$ is microsupported near the $\widetilde{H}_2$ flow-segment-ends opposite $\overline{\mathcal{O}}_{H_1}(\zeta)$ and since $\delta$ can be made arbitrarily small, we have proved that (lack of) ${}^2 \mathrm{WF}^{m,l}(u)$ spreads along each piece of $\widetilde{H}_2$ flow.
\end{proof}

\begin{remark}
In fact, to control the last term in \eqref{eq:rearranged eqn}, it suffices to assume that $Pu = O_{L^2(\mathbb{T}^n)}(h^s)$ for $s\geq\max(2m+1,2l+2)$,
since if $Pu = h^s g$ for $s\geq 2m+1$, $s\geq 2l+2$, and $g\in L^2(\mathbb{T}^n)$, then
\[
|\left\langle Au,h^{-1}Pu \right\rangle_{L^2(\mathbb{T}^n)}| = |\left\langle h^{s-1}Au,g \right\rangle_{L^2(\mathbb{T}^n)}| \leq \|h^{s-1}Au\|_{L^2(\mathbb{T}^n)} \|g\|_{L^2(\mathbb{T}^n)} < \infty.
\]
\end{remark}

\noindent Therefore, if we only wish to prove invariance of the \emph{graded} second wavefront ${}^2 \mathrm{WF}^{m,l}(u)$:

\begin{theorem}
Let $\mathcal{C}\subset T^* \mathbb{T}^n$ be a linear coisotropic submanifold.  Assume $P\in\widetilde{\Psi}^0_h(\mathbb{T}^n)$ has real valued principal symbol depending only on the fiber variables in $T^* \mathbb{T}^n$, and subprincipal symbol identically equal to zero.  Let $u \in L^2(\mathbb{T}^n)$.  Then for all $m$ and $l \leq -1$, ${}^2 \mathrm{WF}^{m,l}(u) \cap SN(\mathcal{C})$ propagates along the flow of $H_2$, if $u$ satisfies $Pu = O_{L^2(\mathbb{T}^n)}(h^s)$ for $s\geq\max(2m+1,2l+2)$.
\end{theorem}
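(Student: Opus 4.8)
The plan is to run the positive-commutator argument from the proof of Theorem~\ref{secondary propagation} with only two modifications: terminate the induction on the regularity order at the prescribed $m$ instead of letting it run to $+\infty$, and control the inhomogeneity term using the weaker hypothesis isolated in the Remark preceding the statement.

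Since $u\in L^2(\mathbb{T}^n)=I^0_{(0)}(\mathcal{C})\subset I^0_{(l)}(\mathcal{C})$ for $l\le 0$, we have ${}^2\mathrm{WF}^{m_0,l}(u)=\emptyset$ for every $m_0\le 0$; so we may assume $m>0$, with ${}^2\mathrm{WF}^{0,l}(u)=\emptyset$ serving as the (vacuously $\widetilde{H}_2$-invariant) base of a half-integer-step induction whose inductive step is exactly the single-step argument in the proof of Theorem~\ref{secondary propagation}. At stage $m'\le m$ one uses $\widetilde{H}_2$-invariance of ${}^2\mathrm{WF}^{m'-1/2,l}(u)$ (inductive hypothesis) and $H_1$-invariance of ${}^2\mathrm{WF}^{m',l}(u)$ --- the latter being the $m=l=0$, $f=Pu$ instance of Theorem~\ref{primary propagation} in its non-rescaled form (valid since $p_0$ depends only on $\xi$; cf.\ Theorem~\ref{take two}), the hypothesis on $Pu$ below forcing ${}^2\mathrm{WF}^{m'+1,l}(Pu)=\emptyset$. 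One then forms the commutant $A\in\Psi^{2m',2l+1}_{2,h}(\mathcal{C})$ with real symbol $\widetilde{a}_1$, derives the identity corresponding to \eqref{remainder 2}, splits $L=L_1+L_2$ by the decomposition lemma, pairs with $u$, and estimates every term as there: $\|Du\|^2$ and $\|Eu\|^2$ are bounded since $D$ is microsupported on the $H_1$-orbit through $\zeta\notin{}^2\mathrm{WF}^{m',l}(u)$ and $E$ off ${}^2\mathrm{WF}^{m',l}(u)$; $|\langle L_1u,u\rangle|$ is bounded since $L_1\in\Psi^{2m'-1,2l}_{2,h}(\mathcal{C})$ is microsupported in $\mathrm{supp}(\widetilde{a}_1)$, which is disjoint from ${}^2\mathrm{WF}^{m'-1/2,l}(u)$; and $|\langle L_2u,u\rangle|$ is bounded since $L_2\in\Re^{2l+2}$ with $2l+2\le 0$ --- this last point being exactly where the hypothesis $l\le -1$ enters, as in Theorem~\ref{secondary propagation}.

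The only genuinely new estimate is the inhomogeneity term. Writing $Pu=h^s g$ with $g\in L^2(\mathbb{T}^n)$ uniformly, $|\langle Au,h^{-1}Pu\rangle|=|\langle h^{s-1}Au,g\rangle|\le\|h^{s-1}Au\|_{L^2}\,\|g\|_{L^2}$, and $h^{s-1}A\in\Psi^{2m'-s+1,\,2l+2-s}_{2,h}(\mathcal{C})$ because multiplication by $h=\rho_\mathrm{sf}\rho_\mathrm{ff}$ lowers both indices by one. Once $s\ge 2m'+1$ and $s\ge 2l+2$, both orders of $h^{s-1}A$ are nonpositive, so $h^{s-1}A$ is $L^2$-bounded by Proposition~\ref{boundedness prop} and the term is finite. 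As the induction stops at $m$ we need only $s\ge\max(2m'+1,2l+2)$ for $m'\le m$, and since $2m'+1\le 2m+1$ this is implied by the single assumption $s\ge\max(2m+1,2l+2)$. With the whole right side of \eqref{eq:rearranged eqn} bounded, boundedness of $\|Cu\|^2$ gives absence of ${}^2\mathrm{WF}^{m',l}(u)$ on the backward $\widetilde{H}_2$-segment out of $\overline{\mathcal{O}}_{H_1}(\zeta)$; sending $\delta\downarrow 0$ propagates the regularity along each $\widetilde{H}_2$-orbit, and iterating to $m'=m$ yields $\widetilde{H}_2$-invariance, hence $H_2$-invariance of ${}^2\mathrm{WF}^{m,l}(u)\cap SN(\mathcal{C})$ (the restriction to $SN(\mathcal{C})$ arising, as before, from $\zeta\in SN(\mathcal{C})$ and the constancy of $\widetilde{\xi}$ along $H_1$-orbits).

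The main difficulty is bookkeeping rather than a new idea: one must check that the finite-order vanishing of $Pu$ is uniformly strong enough at every stage $m'\le m$ --- both for the inhomogeneity term and for the $H_1$-propagation input it relies on --- and one must observe that the half-integer step size remains indispensable, since $L_1$ is one order worse than $C$ in the side-face filtration. The binding constraint sits at $m'=m$ and reads precisely $s\ge\max(2m+1,2l+2)$; everything else is a verbatim transcription of the proof of Theorem~\ref{secondary propagation}.
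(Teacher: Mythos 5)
Your proposal is correct and takes essentially the paper's own route: the theorem is obtained by rerunning the proof of \autoref{secondary propagation} with the induction stopped at graded order $m$ and the inhomogeneity term bounded exactly as in the preceding Remark, via the observation that $h^{s-1}A\in\Psi^{2m-s+1,\,2l+2-s}_{2,h}(\mathcal{C})$ is $L^2$-bounded once $s\geq\max(2m+1,2l+2)$. Your additional care in checking that the finite-order vanishing of $Pu$ still supplies the $H_1$-invariance input at each stage is a welcome (and correct) elaboration of a point the paper leaves implicit, and the stray mention of an $E$ term (which appears in the proof of \autoref{primary propagation} but not in \eqref{eq:rearranged eqn}) is harmless.
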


The Hamiltonian flow of $P\in\Psi_h(\mathbb{T}^n)$ on $\mathcal{C}\subset T^* \mathbb{T}^n$ is described by quasi-periodic motion with respect to a set of frequencies $\overline{\omega}_1,\ldots,\overline{\omega}_n$.  By definition, these frequencies are the derivatives of $\sigma_\mathrm{pr}(P)(\xi)$ with respect each $\xi_j$, restricted to $\mathcal{C}$.  Hence, if all frequencies are irrationally related, then coisotropic regularity fills out the coisotropic, in the base variables.
We also consider the complementary case, in which $\overline{\omega}_i / \overline{\omega}_j \in \mathbb{Q}$ for some $i,j$.
Again, coisotropic regularity occurs on whole Hamiltonian orbits, but in this case, these orbits need not be dense.  Here coisotropic regularity is invariant under two separate flows, according to \autoref{secondary propagation}.

\subsection{Propagation Examples\label{ex:propag examples}}

We wish to apply our real principal type and secondary propagation theorems to quasimodes of the Laplacian $h^2 \Delta$.  However, note that \autoref{secondary propagation} is only valid for $h$-pseudodifferential operators with compactly supported symbols, belonging to the subalgebra $\widetilde{\Psi}^0_h(\mathbb{T}^n) \subset \Psi^0_h(\mathbb{T}^n)$.  Suppose $u = u_h\in L^2(\mathbb{T}^n)$ satisfies $Pu = O_{L^2(\mathbb{T}^n)}(h^\infty)$ for $P = h^2 \Delta - 1$.  Then
\begin{equation} \label{why this cutoff}
\mathrm{WF}_h(u) \subset \mathrm{char}(P) = \{|\xi| = 1\}.
\end{equation}
Clearly, $P\notin\widetilde{\Psi}^0_h(\mathbb{T}^n)$.  Let $\chi$ be a smooth and compactly supported function satisfying $\chi(x)\equiv 1$ in some neighborhood of the point $x=1$.  Then $P = \chi(h^2 \Delta)(h^2 \Delta - 1)\in\widetilde{\Psi}^0_h(\mathbb{T}^n)$ and $${}^2 \mathrm{WF}^{\infty,l}(u) \subset {}^2 \mathrm{char}(P).$$  Notice that we abuse notation and refer to the truncated operator also as $P$.  We may then use \autoref{secondary propagation} to study propagation of ${}^2 \mathrm{WF}^{\infty,l}(u)$ at $SN(\mathcal{C})$ for any linear coisotropic $\mathcal{C} \subset T^* \mathbb{T}^n$.

\begin{example}
Consider the Laplace operator $P = \left(h^2 \Delta_x - 1\right)/2$, and $\mathcal{C} = \{\xi'=(\xi_1,\xi_2)=(0,0)\} \subset T^*\mathbb{T}^4$.  We cut off $P$ as described above.  The Hamiltonian vector field determined by the principal symbol of $P$ is $\xi\cdot\partial_x$.  Coordinates near $SN (\mathcal{C})$ are $\rho_\mathrm{ff} = |\xi'|$, $\hat{\xi}' = \xi' / |\xi'|$, as well as $x$ and $\xi''$.  Then $\mathbf{H} = \xi''\cdot\partial_{x''} + \rho_\mathrm{ff}\left(\hat{\xi}'\cdot\partial_{x'}\right)$.  This is consistent with \eqref{Hamilton}.  For $Pu = 0$, invariance of ${}^2 \mathrm{WF}(u) \cap SN(\mathcal{C})$ under $H_1 = \xi''\cdot\partial_{x''}$ only gives propagation in directions not tangent to the leaves of the characteristic foliation, whereas invariance under $\widetilde{H}_2 = \hat{\xi}'\cdot\partial_{x'}$ gives propagation along the leaves only.


Elements of ${}^2 \mathrm{WF}(u) \cap SN(\mathcal{C})$ satisfy $|\xi''| = 1$ (since ${}^2 \mathrm{WF}(u)\subset {}^2 \mathrm{char}(P)$), and also satisfy $|\hat{\xi}'| = 1$.  For generic values of $\xi''$ and $\hat{\xi}'$ subject to these constraints, $H_1$ and $\widetilde{H}_2$ together flow to all of $\mathbb{T}^4$, but there are exceptional cases.

\end{example}

\begin{example}
For the same operator $P$ and the coisotropic $\mathcal{C} = \{\xi_1 + \xi_3 = \xi_2 + \xi_4 = 0\}$ (so $\mathbf{v}_1 = (1 \ 0 \ 1 \ 0)^t$, $\mathbf{v}_2 = (0 \ 1 \ 0 \ 1)^t$), define
\[
\hat{\xi}_1 = \frac{\xi_1 + \xi_3}{\sqrt{(\xi_1 + \xi_3)^2 + (\xi_2 + \xi_4)^2}}, \ \ \hat{\xi}_2 = \frac{\xi_2 + \xi_4}{\sqrt{(\xi_1 + \xi_3)^2 + (\xi_2 + \xi_4)^2}}
\]
(regarding $\xi_3$ and $\xi_4$ as free variables), and take $\rho_\mathrm{ff} = \sqrt{(\xi_1 + \xi_3)^2 + (\xi_2 + \xi_4)^2}$.  Then $\xi\cdot\partial_x$ lifts to
\[
\xi_3(\partial_{x_3} - \partial_{x_1}) + \xi_4(\partial_{x_4} - \partial_{x_2}) + \rho_\mathrm{ff}\left(\hat{\xi}_1 \partial_{x_1} + \hat{\xi}_2 \partial_{x_2}\right),
\]
with $H_1 = \xi_3(\partial_{x_3} - \partial_{x_1}) + \xi_4(\partial_{x_4} - \partial_{x_2})$ and $\widetilde{H}_2 = \hat{\xi}_1 \partial_{x_1} + \hat{\xi}_2 \partial_{x_2}$.  Again, this is consistent with \eqref{Hamilton}.  (In the formulation of \eqref{Hamilton}, for $d<j\leq n$, $\widetilde{x}_j = \mathbf{w}_j\cdot x$.  To match the coordinates above, we would choose $\mathbf{w}_3 = (0 \ 0 \ 1 \ 0)^t$, $\mathbf{w}_4 = (0 \ 0 \ 0 \ 1)^t$.)  For $Pu = 0$, note that elements of ${}^2 \mathrm{WF}(u) \cap SN(\mathcal{C})$ satisfy $\xi_3^2 + \xi_4^2 =\frac{1}{2}$.


\end{example}

\vspace{.5cm}

\footnotesize

\noindent \textsc{Department of Mathematics, University of Michigan, 530 Church Street, Ann Arbor, MI 48109} \\
\textit{E-mail address}: {\tt \href{mailto:kadakia@umich.edu}{kadakia@umich.edu}}

\end{document}